\numberwithin{equation}{section}
\newtheorem{thm}{Theorem}[section]
\newtheorem{lem}{Lemma}[section]
\newtheorem{corollary}{Corollary}[section]
\newtheorem{rmk}{Remark}[section]
\newcommand{\fr}[2]{{\frac{#1}{#2}} }
\begin{document}

\title{An extended mixed finite element method for elliptic interface problems}

\author{Pei Cao$^1$, Jinru Chen$^{1,2,*}$, Feng Wang$^{1}$}
\renewcommand{\thefootnote}{\arabic{footnote}} \footnotetext[1]{Jiangsu Key Lab for NSLSCS, School of Mathematical Sciences, Nanjing Normal University, Nanjing 210023, China}\footnotetext[2]{School of Mathematics and Information Technology, Jiangsu Second Normal University, Nanjing 211200, China}


\maketitle

\begin{abstract}
	In this paper, we propose an extended mixed finite element method for elliptic interface problems.
\textcolor[rgb]{0.00,0.00,0.00}{By adding some stabilization terms, we present a mixed approximation form based on Brezzi-Douglas-Marini element space and the piecewise constant function space, and show that the discrete inf-sup constant is independent of how the interface intersects the triangulation.
Furthermore, we derive that the optimal convergence holds independent of the location of the interface relative to the mesh.}
  Finally, some numerical examples are presented to verify our theoretical results.
\end{abstract}\\
\textbf{keyword}:
elliptic interface problems, extended mixed finite element, inf-sup condition


\section{Introduction}
Interface problems have many applications in material sciences, solid mechanics and fluid dynamics, such as heat conduction problems with different conduction coefficients, elasticity problems describing various material behaviors and two-phase flows involving different viscosities, etc. These problems usually lead to differential equations with discontinuous or non-smooth solutions across interfaces.

Due to the discontinuity of coefficients across the interface, if the standard finite element method is used to solve interface problems, one usually enforces  mesh lines along the interface in order to get optimal a priori error estimates (\cite{divex}). This is the so-called interface-fitted methods, for example, the arbitrary Lagrangian-Eulerian method in \cite{ALE}. However, for many problems in which the interface varies with time, the generation of interface-fitted meshes is costly. Therefore, it is attractive to develop numerical methods based on interface-unfitted meshes.

There have been some interface-unfitted methods, for instance, the immersed boundary method, the fictitious domain method, the immersed finite element method, the extended finite element method (XFEM), and so on. The immersed boundary method \cite{peskin2002} is robust and easy to implement, but only has lowest accuracy. The fictitious domain method (\cite{FDM95}, \cite{FDM94}, \cite{FDM09}, \cite{FDM07}, \cite{fdm19}) has several kinds of ways to impose the boundary conditions, but it is not so easy to treat complex boundary conditions (\cite{FDM09}).
As for the immersed finite element method, it is difficult to apply this method to the mixed forms of interface problems,
 for more details, please see \cite{lzl98}-\cite{Lintao19}.

Recently,
XFEM has become a popular approach to interface problems. It was originally introduced by T. Belytschko and T. Black in \cite{txlf} to solve elastic crack problems. In {\cite{02Hansbo}}, A. Hansbo and P. Hansbo firstly present the extended finite element method (Nitsche's-XFEM) based on Nitsche's method \cite{71nicai}. The main idea of Nitsche's-XFEM is to use a variant of Nitsche's method to enforce continuity on the interface.
Compared with the other methods for interface problems, Nitsche's-XFEM is usually simple and has optimal convergence.
 Whereafter, it has been successfully implemented in the field of solid mechanics and flow problems, please refer to \cite{fracture}, \cite{09els}, \cite{viewxfem1}, \cite{broadxfem}, \cite{04els}, \cite{17frac}.

Mixed finite element methods have many advantages \cite{Arnold90},
especially when one wants to have a more accurate approximation of the derivatives of the displacement.
There are many works which used the mixed finite element method to deal with Stokes interface problems, such as \cite{saddle15}, \cite{stokes2014}, \cite{Siam16}, \cite{wn2019}, \cite{wql2015}, and so on. Whereas, to the best of our knowledge, there are a few papers using the mixed finite element method to handle elliptic interface problems.
Several works related to mixed finite element methods for the elliptic interface problems are based on interface-fitted meshes, please see \cite{2020DHMFM}, \cite{wsq2007}, \cite{2000Multigrid}, \cite{Yang2003}, \cite{2019jz}. There are also some works (\cite{fracture}, \cite{17frac}) concerned with mixed finite element methods for the fracture Darcy flow based on interface-unfitted meshes.

In this paper, we propose an extended mixed finite element method for elliptic interface problems.
We use the lowest order Brezzi-Douglas-Marini finite element pair ($\mathbb{BDM}_1, \mathbb{P}_0$) to construct our extended mixed finite element space. By adding some stabilization terms, we give the extended mixed finite element approximation form, and prove that the discrete inf-sup constant is independent of how the interface intersects the triangulation. After establishing some $\bm{H}^1(\mathrm{div})$ vector extension results, we deduce the optimal convergence for our method which is independent of the location of the interface relative to the mesh.

The outline of this paper is as follows. In Section 2, we introduce the extended mixed finite element method for the elliptic interface problem. The inf-sup condition is shown in Section 3. In Section 4, the approximation properties and the optimal convergence are presented. Some numerical examples are provided in Section 5. Finally, we give the conclusions in Section 6.
\section{Preliminaries}
 \subsection{The elliptic interface problem}
We adopt the convention that the boldface represents vector-valued functions, operators and their associated spaces. Let $\Omega$ be a bounded convex polygonal domain in $\mathbb{R}^2$ with boundary $\partial\Omega$. A smooth interface defined by $\Gamma=\partial\Omega_1\cap\partial\Omega_2$ divides $\Omega$ into two open sets $\Omega_1$ and $\Omega_2$ such that $\overline{\Omega}=\overline{\Omega}_1\cup\overline{\Omega}_2$ and $\Omega_1\cap\Omega_2=\emptyset$.
Let $\alpha$ be piecewise constant defined as
\begin{equation*}
\alpha=\left\{
\begin{array}{c}
\alpha_{1}~~\mathrm{in}~\Omega_1,\\
\alpha_{2}~~ \mathrm{in}~\Omega_2.
\end{array}
\right.
\end{equation*}
Given  $f \in L^{2}(\Omega)$, we consider the following elliptic interface problem:
\begin{eqnarray}\label{exactproblem}
\begin{aligned}
 &&-\mathrm{div}(\alpha\nabla u)&=f               ~~~~~~~~~ \mathrm{in}~\Omega_1\cup\Omega_2,\\
  &&u&=0   ~~~~~~~~~ \mathrm{on}~\partial  \Omega,\\
 &&[u]&=0 ~~~~~~~~~\mathrm{on}~\Gamma,\\
 &&[\alpha\nabla u\cdot\bm{n}]&=0  ~~~~~~~~~ \mathrm{on}~\Gamma,
\end{aligned}
\end{eqnarray}
where
 $[v]=v_1|_{\Gamma}-v_2|_{\Gamma}$ is the jump on the interface $\Gamma$ with $v_{i}=v|_{\Omega_i}, ~i=1,2$, and $\bm{n}$ is the unit normal vector on $\Gamma$ pointing from $\Omega_1$ towards $\Omega_2$ (see Figure 1).

Setting $\bm{p}=\alpha\nabla u~$, then we get the mixed form:
\begin{eqnarray}\label{orig pro}
\begin{aligned}
&& -\mathrm{div}~\bm{p}&=f               ~~~~~~~~~ \mathrm{in}~\Omega_1\cup\Omega_2,\\
&& \bm{p} &=\alpha\nabla u~~~~~\mathrm{in}~\Omega_1\cup\Omega_2,\\
&&  u&=0   ~~~~~~~~~ \mathrm{on}~\partial  \Omega,\\
&& [u]&=0 ~~~~~~~~~\mathrm{on}~\Gamma,\\
&&  [\bm{p}\cdot\bm{n}]&=0  ~~~~~~~~~ \mathrm{on}~\Gamma.
\end{aligned}
\end{eqnarray}
The weak formulation of the problem (\eqref{orig pro}) is to find $({\bm{p}},u)\in \bm{Q}\times V=\bm{H}(\mathrm{div}; \Omega)\times L^2(\Omega)$ such that
 \begin{equation}\label{mixorig}
\left\{
\begin{array}{rl}
\displaystyle\int_{\Omega}\alpha^{-1}\bm{p}\cdot\bm{q}dx+\displaystyle\int_{\Omega}u\mathrm{div}\bm{q}dx&=\displaystyle 0, ~~~\quad\quad\quad\forall \bm{q}\in\bm{H}(\mathrm{div}; \Omega),\vspace{3mm} \\
 -\displaystyle\int_{\Omega}v\mathrm{div}\bm{p}dx&=\displaystyle\int_{\Omega}fvdx,  ~~~~ \forall v \in L^2(\Omega).
\end{array}
\right.
\end{equation}
It is well-known that the weak formulation (\ref{mixorig}) is well-posed (\cite{1991mixfem}).
\begin{figure*}[htbp]
  \center{
  \includegraphics[width=8cm]{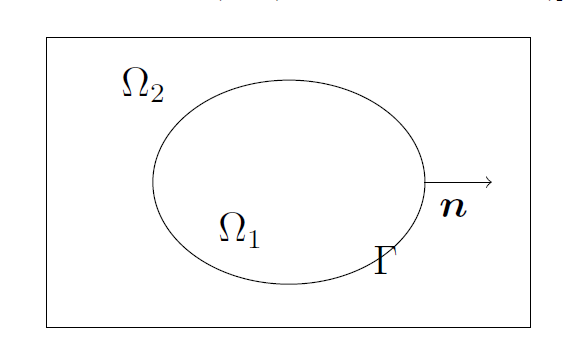}}
  \centerline{$\textbf{Fig. 1.}$ Domain $\Omega$, its subdomains $\Omega_1,\Omega_2$, and interface $\Gamma$.}
\end{figure*}

\subsection{Notations and assumptions}
For a bounded domain $D$, we employ the standard Sobolev space $\bm{H}^m(D)$ with norm $\|\cdot\|_{m,D}$.
When $m=0$, we write $\bm{L}^2(D)$ instead of $\bm{H}^0(D)$ with inner product $(\cdot,\cdot)_D$. For an integer $k\geq 0$, $P_k(D)$ and $\bm{P}_k(D)$ denote the set of all scalar-valued and vector-valued polynomials on domain $D$ with degree less than or equal to $k$, respectively. For a bounded open set $D=D_1\cup D_2$,
 we denote by $\bm{H}^m(D_1\cup D_2)$ the Sobolev spaces of functions in $D$ such that $\bm{p}|_{D_i}\in \bm{H}^m(D_i)$ with the broken norm
\begin{equation*}
  \|\cdot\|_{m,D_1\cup D_2}=\bigg(\displaystyle\sum_{i=1}^{2}\|\cdot\|^2_{m,D_i}\bigg)^{\frac{1}{2}}.
\end{equation*}


Let $\mathcal{T}_h$ be a triangulation of $\Omega$, generated independently of the location of the interface $\Gamma$. Define $h_K$ as the diameter of $K$, and $h=\underset{K\in\mathcal{T}_h}{\mathrm{max}}h_K$.
 By $G_h=\{K\in\mathcal{T}_h\mid K\cap\Gamma\neq\emptyset\}$, we denote the set of elements that are intersected by the interface. For an element $K\in G_h$, we use $\Gamma_K=\Gamma\cap K$ to represent the part of $\Gamma$ in $K$.
 Define the subdomains $\Omega_{i,h}=\{K\in\mathcal{T}_h\mid K\subseteq\Omega_i ~\text{or} ~K\cap\Gamma\neq\emptyset\},i=1,2$.
 \textcolor[rgb]{0.00,0.00,0.00}{Let $\mathcal{T}_{h,i}$ be the triangulation of $\Omega_{i,h}$ and for every triangle $K\in\mathcal{T}_{h,1}\cap\mathcal{T}_{h,2}$, we have $K\cap\Gamma\neq\emptyset$.}
   Define
   $\mathcal{F}_{h,i}=\{e\subseteq\partial K \mid \forall K\in\mathcal{T}_{h,i}\},~\mathcal{F}_{h,i}^{cut}=\{e_i = e|_{\Omega_i}~|~e\subseteq\partial K,~e\cap\Gamma\neq\emptyset,~\forall K\in G_h\},~i=1,2$ (see Figure 2).
 \textcolor[rgb]{0.00,0.00,0.00}{The letter $C$ or $c$, with or without subscript, denotes a generic constant that may not be the same at different occurrences and is independent of the mesh size and $\alpha_1,~\alpha_2$.}

   We make the following assumptions with respect to the mesh and the interface (see \cite{02Hansbo}).
\begin{enumerate}
  \item  The triangulation is non-degenerate, i.e., there exists a constant $C>0$, such that for all $K\in\mathcal{T}_h$, $h_K/ \rho_K$
$\leq C,$ where $\rho_K$ is the diameter of
the largest ball contained in $K$.
  \item $\Gamma$ intersects with the boundary $\partial K$ of an element $K$ in $G_h$ exactly twice and each
(open) edge at most once.
  \item Let $\Gamma_{K,h}$ be the straight line segment connecting the points of intersection between $\Gamma$ and $\partial K$. We assume that $\Gamma_K$ is a function of length on $\Gamma_{K,h}$, in local coordinates
      \begin{equation*}
       \Gamma_{K,h}=\{(\xi,\eta)\mid0<\xi<|\Gamma_{K,h}|,\eta=0\},
      \end{equation*}
and
\begin{equation*}
  \Gamma_K=\{(\xi,\eta)\mid0<\xi<|\Gamma_{K,h}|,\eta=\delta(\xi)\}.
\end{equation*}
\end{enumerate}

The second assumption and the third assumption  are always fulfilled on sufficiently fine meshes under general assumptions, for example, the curvature of $\Gamma$ is bounded.
\begin{figure*}[htbp]
  \center{
  \includegraphics[width=8cm]{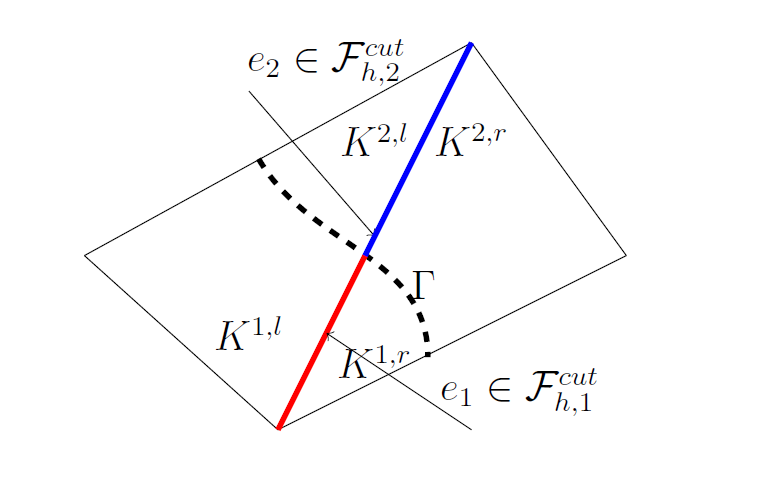}}
  \centerline{$\textbf{Fig. 2.}$ Notations related to adjacent cut elements.}
\end{figure*}

\subsection{The extended finite element method}
Defining
\begin{equation*}
\begin{array}{c}
  \bm{Q}_{h,i}=\{\bm{q_h}\in\bm{H}(\mathrm{div}; \Omega_{i,h})\mid \bm{q_h}|_K\in\bm{P_1}(K),~\forall K\in\mathcal{T}_{h,i}\}, \\
\end{array}
\end{equation*}
\begin{equation*}
 V_{h,i}=\{v_h \in L^2(\Omega_i)\mid v_h|_K\in P_0(K),~\forall K\in\mathcal{T}_{h,i}\}, i=1,2,
\end{equation*}
we introduce the following extended mixed finite element spaces
\begin{eqnarray*}
\bm{Q}_h=\bm{Q}_{h,1}\times \bm{Q}_{h,2}, ~~~
V_h=V_{h,1}\times V_{h,2}.
\end{eqnarray*}

For any function $\varphi$ defined on $K\in G_h$, \textcolor[rgb]{0.00,0.00,0.00}{set} $\{\varphi\}=k^1\varphi_1+k^2\varphi_2$ and $\{\varphi\}_{\ast}=k^2\varphi_1+k^1\varphi_2$ with $k^i=\dfrac{|K_i|}{|K|}$, where $|K|= \mathrm{meas}(K)$, $|K_i|= \mathrm{meas}(K_i)$, $K_i=K\cap\Omega_i,~\varphi_i=\varphi|_{\Omega_i},~i=1,2$. Clearly, $0\leq k^i\leq 1$ and $k^1+k^2=1$.
Recalling the definition of $[\varphi]$, we have $[\varphi\psi]=\{\varphi\}[\psi]+[\varphi]\{\psi\}_{\ast}$.
Before bringing in the mixed formulation for the elliptic interface problem, we need to define the jump of a piecewise smooth, discontinuous function $v$ on the segment $e_i\in \mathcal{F}_{h,i}^{cut}$. For a segment $e_i\in \mathcal{F}_{h,i}^{cut}$ of the whole edge $e\in\mathcal{F}_{h,i}$, let $K^l$ and $K^r$ be the two neighboring cells of edge $e$. Denote $v^l=v|_{K^l}, ~v^r=v|_{K^r}$ and set
\begin{equation*}
  [v]_{e_i}=v^l|_{e_i}-v^r|_{e_i}.
\end{equation*}
 If $e$ is located on $\partial\Omega$, then the jump coincides with the trace.

The discrete variational formulation of problem (\ref{orig pro}) is to find $(\bm{p_h},u_h)\in \bm{Q}_h\times V_h$, such that for any $(\bm{q_h},v_h)\in \bm{Q}_h\times V_h$,
\begin{equation}\label{femwf}
B_h(\bm{p_h},u_h;\bm{q_h},v_h)+\gamma_1J_1(u_h,v_h)+\gamma_2J_2(u_h,v_h)=l_h(\bm{q_h},v_h).
\end{equation}
Here $B_h(\cdot,\cdot;\cdot,\cdot)$ is a bilinear form defined by
\begin{equation*}
  B_h(\bm{p_h},u_h;\bm{q_h},v_h)=a_h(\bm{p_h},\bm{q_h})+b_h(\bm{q_h},u_h)-b_h(\bm{p_h},v_h),
\end{equation*}
where
\begin{align*}
a_h(\bm{p_h},\bm{q_h})=&\int_{\Omega_1\cup\Omega_2}\alpha^{-1}\bm{p_h}\cdot\bm{q_h}dx+\int_{\Omega_1\cup\Omega_2}\alpha^{-1}\mathrm{div}\bm{p_h}\mathrm{div}\bm{q_h}dx\\
&+\gamma h_K^{-1}\int_{\Gamma}\alpha_{\mathrm{min}}^{-1}[\bm{p_h}\cdot\bm{n}][\bm{q_h}\cdot\bm{n}]ds,\\
b_h(\bm{q_h},u_h)=&\int_{\Omega_1\cup\Omega_2}u_h\mathrm{div}\bm{q_h}dx-\int_{\Gamma}\{u_h\}[\bm{q_h}\cdot\bm{n}]ds, \end{align*}
and $l_h(\cdot,\cdot)$ is a linear functional defined by
\begin{align*}
l_h(\bm{q_h},v_h)=&\int_{\Omega}f v_h dx-\int_{\Omega_1\cup\Omega_2}\alpha^{-1}f\mathrm{div}\bm{q_h}dx.
\end{align*}
The stabilization terms are defined as
\begin{align*}
J_1(u_h,v_h)=&\displaystyle{\sum_{i=1}^2}\displaystyle{\sum_{e_i\in\mathcal{F}_{h,i}^{cut}}}\int_{e_i} \alpha_{\mathrm{min}}h[u_h]_{e_i}[v_h]_{e_i}ds,\\
J_2(u_h,v_h)=&\int_{\Gamma} \alpha_{\mathrm{min}}h_K[u_h][v_h]ds,
\end{align*}
 where $\alpha_{\mathrm{min}}$=$\mathrm{min}$$\{\alpha_1$,$\alpha_2\}$, $\gamma_1, \gamma_2, \gamma>0$ are stabilisation parameters independent of $h$.
\begin{rmk}
The third term in $a_h(\cdot,\cdot)$ is a standard penalty term in Nitsche's method \cite{02Hansbo}. The two second terms in $a_h(\cdot,\cdot)$ and $l_h(\cdot,\cdot)$ derived from the first equation in (\ref{orig pro}) are added to guarantee the coercivity of $a_h(\cdot,\cdot)$. The terms $J_1(u_h,v_h)$ and $J_2(u_h,v_h)$ are used to ensure the discrete inf-sup stability of the method.
\end{rmk}



\section{Analysis of the scheme}
Firstly, for any $\bm{p_h}\in\bm{Q}_h,~v_h\in V_h$, we define the following mesh dependent norms,
\begin{align*}
&\|\{v_h\}\|_{{\frac{1}{2}},h,\Gamma}=\bigg(\underset{K\in G_h}{\sum}h_K^{-1}\|\{v_h\}\|_{0,\Gamma_K}^2\bigg)^{\frac{1}{2}},\\
&\|[\bm{p_h}\cdot\bm{n}]\|_{{-\frac{1}{2}},h,\Gamma}=\bigg(\underset{K\in G_h}{\sum}h_K\|[\bm{p_h}\cdot\bm{n}]\|_{0,\Gamma_K}^2\bigg)^{\frac{1}{2}},\\
 &\|\bm{p_h}\|_h=\bigg(\|\alpha^{-\frac{1}{2}}\bm{p_h}\|_{0,\Omega_1\cup\Omega_2}^2
  +\|\alpha^{-\frac{1}{2}}\mathrm{div}\bm{p_h}\|_{0,\Omega_1\cup\Omega_2}^2
  +h^{-2}\|\alpha_{\mathrm{min}}^{-\frac{1}{2}}[\bm{p_h}\cdot\bm{n}]\|_{{-\frac{1}{2}},h,\Gamma}^2\bigg)^{\frac{1}{2}},\\
  &\|v_h\|_{\star}=\bigg(\|\alpha^{\frac{1}{2}}v_h\|_{0,\Omega_1\cup\Omega_2}^2+h_K^2\|\alpha_{\mathrm{min}}^{\frac{1}{2}}\{v_h\}\|_{{\frac{1}{2}},h,\Gamma}^2\bigg)^{\frac{1}{2}},\\
 &\|(\bm{p_h},v_h)\|=\bigg(\|\bm{p_h}\|_h^2+\|v_h\|_{\star}^2\bigg)^{\frac{1}{2}},\\
  &\|(\bm{p_h},v_h)\|_h=\bigg(\|\bm{p_h}\|_h^2+\|\alpha^{\frac{1}{2}}v_h\|_{0,\Omega_1\cup\Omega_2}^2+J_1(v_h,v_h)+J_2(v_h,v_h)\bigg)^{\frac{1}{2}}.
  \end{align*}

Since $\bm{Q}_h\nsubseteq\bm{Q},~V_h\nsubseteq V$,
the finite element formulation (\ref{femwf}) is not consistent. Therefore, we have the following weak consistent relation.
  \begin{lem}\label{consistency}
  Let $(\bm{p},u)\in \bm{H}(\mathrm{div};\Omega)\times H^1(\Omega)$, $(\bm{p_h},u_h)\in \bm{Q}_h\times V_h$ be the pairs of solutions of the problem (\ref{mixorig}) and (\ref{femwf}), then
\begin{equation}
  B_h(\bm{p}-\bm{p_h},u-u_h;\bm{q_h},v_h)=\gamma_1J_1(u_h,v_h)+\gamma_2J_2(u_h,v_h),~~\forall (\bm{q_h},v_h)\in \bm{Q}_h\times V_h.
\end{equation}
\end{lem}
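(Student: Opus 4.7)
The plan is to establish the identity $B_h(\bm p, u;\bm q_h, v_h) = l_h(\bm q_h, v_h)$ for the exact solution $(\bm p, u)$, and then subtract the discrete equation (\ref{femwf}) to get the stated residual. The stabilization terms $J_1, J_2$ survive because they involve only $u_h$, not $u$, so subtraction produces exactly the right-hand side. Thus the real content is a term-by-term verification that $(\bm p, u)$ satisfies the discrete form modulo the stabilizers.

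First I would exploit the interface conditions given in (\ref{orig pro}). Since $[\bm p\cdot\bm n] = 0$ on $\Gamma$, the penalty contribution $\gamma h_K^{-1}\int_\Gamma \alpha_{\min}^{-1}[\bm p\cdot\bm n][\bm q_h\cdot\bm n]\,ds$ in $a_h(\bm p,\bm q_h)$ drops out, as does the interface term $\int_\Gamma \{v_h\}[\bm p\cdot\bm n]\,ds$ in $b_h(\bm p, v_h)$. Next I would use the PDE $-\mathrm{div}\,\bm p = f$ in $\Omega_1 \cup \Omega_2$ to rewrite
\begin{equation*}
\int_{\Omega_1\cup\Omega_2}\alpha^{-1}\mathrm{div}\bm p\,\mathrm{div}\bm q_h\,dx = -\int_{\Omega_1\cup\Omega_2}\alpha^{-1}f\,\mathrm{div}\bm q_h\,dx,
\end{equation*}
and $-b_h(\bm p, v_h) = -\int_{\Omega_1\cup\Omega_2} v_h\mathrm{div}\bm p\,dx = \int_\Omega f v_h\,dx$. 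Together these already reproduce the two pieces of $l_h(\bm q_h, v_h)$.

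The remaining task is to show that the $\int_\Omega \alpha^{-1}\bm p\cdot\bm q_h\,dx$ contribution from $a_h$ is cancelled by $b_h(\bm q_h, u)$. I would integrate by parts subdomain-wise:
\begin{equation*}
\int_{\Omega_i} u\,\mathrm{div}\bm q_h\,dx = -\int_{\Omega_i}\nabla u\cdot\bm q_h\,dx + \int_{\partial\Omega_i} u\,\bm q_h\cdot\bm n_i\,ds,
\end{equation*}
then sum over $i=1,2$. The outer boundary contribution vanishes by $u|_{\partial\Omega}=0$, while the $\Gamma$ contributions combine into $\int_\Gamma [u\,\bm q_h\cdot\bm n]\,ds$. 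Applying the product jump identity $[u\,\bm q_h\cdot\bm n] = \{u\}[\bm q_h\cdot\bm n] + [u]\{\bm q_h\cdot\bm n\}_\ast$ and using $[u]=0$ (so also $\{u\}=u$ on $\Gamma$), this interface integral equals $\int_\Gamma \{u\}[\bm q_h\cdot\bm n]\,ds$, which is exactly the term subtracted in the definition of $b_h(\bm q_h, u)$. Consequently $b_h(\bm q_h, u) = -\int_{\Omega_1\cup\Omega_2}\nabla u\cdot\bm q_h\,dx = -\int_{\Omega_1\cup\Omega_2}\alpha^{-1}\bm p\cdot\bm q_h\,dx$, cancelling the surviving $a_h$ contribution and yielding $B_h(\bm p, u;\bm q_h,v_h) = l_h(\bm q_h, v_h)$.

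There is no real obstacle here; the only mild care needed is bookkeeping the orientation of $\bm n$ on $\Gamma$ (from $\Omega_1$ toward $\Omega_2$) so the subdomain integration-by-parts assembles into $[\cdot]$ with the paper's sign convention, and recognizing that the averaging weights $k^1, k^2$ in $\{\cdot\}$ reduce to the identity when $[u]=0$. Once $B_h(\bm p, u;\bm q_h, v_h) = l_h(\bm q_h, v_h)$ is established, subtracting (\ref{femwf}) from it and using linearity of $B_h$ in its first argument pair yields the claim.
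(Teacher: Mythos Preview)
Your proposal is correct and follows essentially the same approach as the paper: both arguments split $B_h(\bm p-\bm p_h,u-u_h;\bm q_h,v_h)$ by linearity, invoke the discrete equation (\ref{femwf}) to replace $B_h(\bm p_h,u_h;\cdot,\cdot)$, and then verify $B_h(\bm p,u;\bm q_h,v_h)=l_h(\bm q_h,v_h)$ using $-\mathrm{div}\,\bm p=f$, $[\bm p\cdot\bm n]=0$, $[u]=0$, $u|_{\partial\Omega}=0$, and Green's formula on each $\Omega_i$. Your write-up is in fact more explicit than the paper's, which compresses the cancellation into a single displayed chain citing ``Green's formula, (\ref{orig pro}) and (\ref{femwf})''.
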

\begin{proof}
  By the definition of $B_h(\cdot,\cdot;\cdot,\cdot)$, Green's formula, $(\ref{orig pro})$ and (\ref{femwf}),
 \begin{align*}
B_h(\bm{p}-\bm{p_h},u-u_h;\bm{q_h},v_h)
=&B_h(\bm{p},u;\bm{q_h},v_h)-B_h(\bm{p_h},u_h;\bm{q_h},v_h)\\
=&\int_{\Omega_1\cup\Omega_2}\alpha^{-1}\bm{p}\cdot\bm{q_h}dx+\int_{\Omega_1\cup\Omega_2}\alpha^{-1}\mathrm{div}\bm{p}\mathrm{div}\bm{q_h}dx\\
&+\int_{\Omega_1\cup\Omega_2}u\mathrm{div}\bm{q_h}dx-\int_{\Gamma}u[\bm{q_h}\cdot\bm{n}]ds\\
&-\int_{\Omega_1\cup\Omega_2}v_h\mathrm{div}\bm{p}dx
+\int_{\Omega_1\cup\Omega_2}\alpha^{-1}f\mathrm{div}\bm{q_h}dx\\
&-\int_{\Omega}fv_hdx+\gamma_1J_1(u_h,v_h)+\gamma_2J_2(u_h,v_h)\\
=&\gamma_1J_1(u_h,v_h)+\gamma_2J_2(u_h,v_h).
\end{align*}
The proof is completed.
\end{proof}

\subsection{Continuity analysis}
\label{sec:2}
\begin{lem}\label{inve}
  For $v_h\in V_h$, there exists a positive constant $C_I$, such that the following inequality holds
  \begin{equation}\label{inverse}
    h_K^2\|\alpha_{\mathrm{min}}^{\frac{1}{2}}\{v_h\}\|_{\frac{1}{2},h,\Gamma}^2\leq C_I\|\alpha^{\frac{1}{2}}v_h\|_{0,\Omega_1\cup\Omega_2}^2.
  \end{equation}
\end{lem}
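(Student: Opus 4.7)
The plan rests on two structural features of the problem: first, $V_h$ is the space of piecewise constants, so on each cut element $K\in G_h$ both $v_{h,1}|_K$ and $v_{h,2}|_K$ are scalar constants and hence $\{v_h\}|_K=k^1v_{h,1}+k^2v_{h,2}$ is a single constant, namely a convex combination of those two values; second, the weights $k^i=|K_i|/|K|$ are precisely what is needed to convert interface quantities on $\Gamma_K$ into volume quantities on $K_i=K\cap\Omega_i$.

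First I would expand the mesh-dependent seminorm on the left according to its definition and use piecewise constancy to pull the value of $\{v_h\}$ out of the $L^2(\Gamma_K)$ integral, obtaining
\[
\|\alpha_{\min}^{1/2}\{v_h\}\|_{1/2,h,\Gamma}^{2}
=\sum_{K\in G_h}h_K^{-1}\alpha_{\min}\,|\Gamma_K|\,\bigl(k^1 v_{h,1}+k^2 v_{h,2}\bigr)^{2}.
\]
Next I would apply the elementary inequality
\[
\bigl(k^1 v_{h,1}+k^2 v_{h,2}\bigr)^{2}\le k^1 v_{h,1}^{2}+k^2 v_{h,2}^{2},
\]
which follows from Cauchy--Schwarz together with $k^1+k^2=1$.

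The remaining work is geometric. Assumption 3 together with the curvature bound on $\Gamma$ yields $|\Gamma_K|\le C h_K$, and shape regularity gives $h_K^{2}\le C|K|$. Substituting $k^i=|K_i|/|K|$, each summand is controlled by
\[
C\,\alpha_{\min}\,\frac{h_K^{2}}{|K|}\sum_{i=1}^{2}|K_i|\,v_{h,i}^{2}
\le C\sum_{i=1}^{2}\alpha_i\,\|v_{h,i}\|_{0,K_i}^{2},
\]
and after absorbing the outer $h_K^{2}$ factor using the extra power of $h_K$ available from shape regularity, summing over $K\in G_h$ and the two subdomains gives the stated bound with $C_I$ depending only on the shape regularity constant and the interface curvature.

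The main obstacle is the scenario in which $k^1$ or $k^2$ is arbitrarily small, i.e.\ $\Gamma$ cuts $K$ very close to one of its edges. Any strategy relying on trace inequalities applied to $v_{h,i}$ on $K_i$ individually would produce constants blowing up in this regime. The convex combination step side-steps this cleanly: the small weight $k^i$ appearing on the interface side is matched precisely by the correspondingly small volume $|K_i|$ on the right, so the final constant is independent of how the interface intersects the triangulation, in keeping with the robustness theme of the paper.
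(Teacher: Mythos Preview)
Your argument is correct and follows essentially the same route as the paper: exploit piecewise constancy to reduce $\|\{v_h\}\|_{0,\Gamma_K}^2$ to $|\Gamma_K|(k^1v_{h,1}+k^2v_{h,2})^2$, then use $|\Gamma_K|\le Ch_K$ together with $|K|\ge ch_K^2$ to convert to volume norms on $K_i$. The only cosmetic difference is that the paper applies $(a+b)^2\le 2(a^2+b^2)$ and keeps the factor $(k^i)^2$ (then also invoking $|K_i|\le h_K^2$), whereas your convexity bound $(k^1v_{h,1}+k^2v_{h,2})^2\le k^1v_{h,1}^2+k^2v_{h,2}^2$ is slightly sharper and makes that extra geometric estimate unnecessary; note also that your displayed bound already contains the $h_K^2$ prefactor, so the subsequent sentence about ``absorbing the outer $h_K^2$ factor'' is redundant.
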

\begin{proof}
Since $v_h\in V_h$ is a piecewise constant on $K_i$ and $|\Gamma_K|\leq h_K, |K_i|\leq h_K^2, |K|\geq ch_K^2$, we have
  \begin{align*}
    h_K^2\|\alpha_{\mathrm{min}}^{\frac{1}{2}}\{v_h\}\|_{\frac{1}{2},h,\Gamma}^2
    &\leq 2\displaystyle{\sum_{i=1}^2}\displaystyle{\sum_{K\in G_h}}h_K\alpha_{\mathrm{min}}\|v_h|_{\Omega_i}\|_{0,\Gamma_K}^2|k^i|^2\\
    &=2\displaystyle{\sum_{i=1}^2}\displaystyle{\sum_{K\in G_h}}\alpha_{\mathrm{min}}\|v_h|_{\Omega_i}\|_{0,K_i}^2\dfrac{h_K|\Gamma_K|}{|K_i|}\dfrac{|K_i|^2}{|K|^2}\\
    &\leq C_I\|\alpha^{\frac{1}{2}}v_h\|_{0,\Omega_1\cup\Omega_2}^2.
  \end{align*}
  Thus the desired result is obtained.
 \end{proof}
 \textcolor[rgb]{0.00,0.00,0.00}{
 From the definitions of $\|(\bm{q_h},v_h)\|$ and $\|(\bm{q_h},v_h)\|_h$, in light of Lemma \ref{inve}, we can derive the following conclusion.}
\begin{lem}\label{fskz}
  For any $(\bm{q_h},v_h)\in \bm{Q}_h\times V_h,$ there exists a constant $C>0$, such that
    $$\|(\bm{q_h},v_h)\|\leq C\|(\bm{q_h},v_h)\|_h. $$
\end{lem}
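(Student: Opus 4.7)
The proof should be almost immediate: a comparison of the definitions of $\|(\bm{q_h},v_h)\|$ and $\|(\bm{q_h},v_h)\|_h$ term by term, followed by a single application of Lemma \ref{inve}.

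My plan is to write out $\|(\bm{q_h},v_h)\|^2 = \|\bm{q_h}\|_h^2 + \|v_h\|_\star^2$ and then expand $\|v_h\|_\star^2 = \|\alpha^{1/2}v_h\|_{0,\Omega_1\cup\Omega_2}^2 + h_K^2\|\alpha_{\min}^{1/2}\{v_h\}\|_{1/2,h,\Gamma}^2$. The term $\|\bm{q_h}\|_h^2$ appears identically in both norms, and the term $\|\alpha^{1/2}v_h\|_{0,\Omega_1\cup\Omega_2}^2$ appears in both as well, so the only quantity that needs to be controlled by the right-hand side is the interface-trace piece $h_K^2\|\alpha_{\min}^{1/2}\{v_h\}\|_{1/2,h,\Gamma}^2$.

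For that piece, Lemma \ref{inve} gives directly
\begin{equation*}
h_K^2\|\alpha_{\min}^{1/2}\{v_h\}\|_{1/2,h,\Gamma}^2 \leq C_I\|\alpha^{1/2}v_h\|_{0,\Omega_1\cup\Omega_2}^2,
\end{equation*}
so this term is dominated by $C_I\|\alpha^{1/2}v_h\|_{0,\Omega_1\cup\Omega_2}^2$, which is already part of $\|(\bm{q_h},v_h)\|_h^2$. Finally, I would note that the two stabilization contributions $J_1(v_h,v_h)$ and $J_2(v_h,v_h)$ are non-negative, so discarding them only decreases the right-hand side, and combining everything yields $\|(\bm{q_h},v_h)\|^2 \leq (1+C_I)\|(\bm{q_h},v_h)\|_h^2$, which gives the desired constant $C=\sqrt{1+C_I}$.

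There is essentially no obstacle here: the lemma is a bookkeeping statement that packages Lemma \ref{inve} together with the observation that $J_1,J_2\geq 0$. The only thing to be careful about is making sure the $h_K$-weighting in the definition of $\|\cdot\|_\star$ is consistent with the hypothesis of Lemma \ref{inve} (both use $h_K^2\|\alpha_{\min}^{1/2}\{v_h\}\|_{1/2,h,\Gamma}^2$, so this matches exactly). No additional inverse inequality, trace inequality, or geometric argument about cut elements is needed beyond what Lemma \ref{inve} already encapsulates.
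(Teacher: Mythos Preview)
Your proposal is correct and matches the paper's approach exactly: the paper does not even write out a proof, merely stating that the result follows ``from the definitions of $\|(\bm{q_h},v_h)\|$ and $\|(\bm{q_h},v_h)\|_h$, in light of Lemma \ref{inve}.'' Your expansion of this into an explicit term-by-term comparison, invoking Lemma \ref{inve} for the single nontrivial piece and the nonnegativity of $J_1,J_2$, is precisely the intended argument.
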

By the Cauchy-Schwarz inequality and Lemma \ref{fskz}, the following result is obvious.
\begin{lem}\label{continu}
For all $(\bm{p_h},u_h),~(\bm{q}_h,v_h)\in \bm{Q}_h\times V_h$, there holds
  \begin{equation}\label{bilicon}
   L(\bm{p_h},u_h;\bm{q_h},v_h)\leq C\|(\bm{p_h},u_h)\|_h\|(\bm{q_h},v_h)\|_h,
  \end{equation}
   where $L(\bm{p_h},u_h;\bm{q_h},v_h)=  B_h(\bm{p_h},u_h;\bm{q_h},v_h)+\gamma_1J_1(u_h,v_h)+\gamma_2J_2(u_h,v_h)$.
\end{lem}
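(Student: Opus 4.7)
The plan is to expand $L(\bm{p_h},u_h;\bm{q_h},v_h)$ into its constituent pieces, apply the Cauchy--Schwarz inequality term by term to obtain the bound $L\leq C\|(\bm{p_h},u_h)\|\,\|(\bm{q_h},v_h)\|$, and then invoke Lemma \ref{fskz} on both factors to pass to the $\|\cdot\|_h$-norm. Explicitly, I would write
\begin{equation*}
L = a_h(\bm{p_h},\bm{q_h}) + b_h(\bm{q_h},u_h) - b_h(\bm{p_h},v_h) + \gamma_1 J_1(u_h,v_h) + \gamma_2 J_2(u_h,v_h),
\end{equation*}
and in each integrand split the weight ($\alpha^{-1}$, $\alpha_{\min}^{-1}$, $h_K^{\pm 1}$) symmetrically into two equal square-root factors so that Cauchy--Schwarz matches each of the three pieces of $a_h$ with the corresponding pieces of $\|\bm{p_h}\|_h\|\bm{q_h}\|_h$, and similarly handles $J_1$ and $J_2$ against themselves inside $\|(\bm{p_h},u_h)\|_h\|(\bm{q_h},v_h)\|_h$.

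The only slightly delicate piece is the interface term of $b_h$, namely $\int_\Gamma \{u_h\}[\bm{q_h}\cdot\bm{n}]\,ds$. Inserting the paired weights $h_K^{-1/2}\alpha_{\min}^{1/2}$ and $h_K^{1/2}\alpha_{\min}^{-1/2}$ cell by cell on $G_h$ and applying Cauchy--Schwarz yields
\begin{equation*}
\bigg|\int_\Gamma \{u_h\}[\bm{q_h}\cdot\bm{n}]\,ds\bigg|\leq \|\alpha_{\min}^{1/2}\{u_h\}\|_{\frac{1}{2},h,\Gamma}\,\|\alpha_{\min}^{-1/2}[\bm{q_h}\cdot\bm{n}]\|_{-\frac{1}{2},h,\Gamma}.
\end{equation*}
The first factor is controlled by $h_K^{-1}\|u_h\|_\star$ and the second by $h\,\|\bm{q_h}\|_h$ (from the definitions of $\|\cdot\|_\star$ and $\|\cdot\|_h$); under the mesh assumption that $h_K\sim h$ on cut cells, the $h$-factors cancel. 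The volume piece $\int u_h\,\mathrm{div}\,\bm{q_h}$ is handled by pairing $\|\alpha^{1/2}u_h\|_{0,\Omega_1\cup\Omega_2}$ with $\|\alpha^{-1/2}\mathrm{div}\,\bm{q_h}\|_{0,\Omega_1\cup\Omega_2}$, both of which appear in $\|(\bm{p_h},u_h)\|$ and $\|(\bm{q_h},v_h)\|$ respectively. The symmetric $b_h(\bm{p_h},v_h)$ is treated analogously.

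Summing all the term-wise estimates gives $L(\bm{p_h},u_h;\bm{q_h},v_h)\leq C\|(\bm{p_h},u_h)\|\,\|(\bm{q_h},v_h)\|$, and Lemma \ref{fskz} applied to each of the two factors immediately yields the claimed inequality. I do not anticipate a real obstacle in this argument; the only bookkeeping that merits care is keeping the $h_K$ and $\alpha_{\min}$ weights aligned in the surface term of $b_h$ so that the two dual norms produced by Cauchy--Schwarz line up exactly with the definitions of $\|\bm{q_h}\|_h$ and $\|u_h\|_\star$.
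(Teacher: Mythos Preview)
Your approach is essentially the paper's: bound $a_h$ and $b_h$ term by term via Cauchy--Schwarz (with exactly the weight-splitting you describe for the interface term), then invoke Lemma~\ref{fskz}. One small bookkeeping slip: the summary line ``$L\leq C\|(\bm{p_h},u_h)\|\,\|(\bm{q_h},v_h)\|$'' is not literally true, since $J_1(u_h,v_h)$ and $J_2(u_h,v_h)$ are not controlled by the $\|\cdot\|$-norm (which contains no $J_1,J_2$ contribution); you should state the bound as $B_h\leq C\|(\cdot,\cdot)\|\,\|(\cdot,\cdot)\|$ and handle $\gamma_1J_1+\gamma_2J_2\leq C\|(\cdot,\cdot)\|_h\|(\cdot,\cdot)\|_h$ separately, exactly as you indicated earlier in your outline.
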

\begin{proof}
Using the Cauchy-Schwarz inequality, we obtain
 \begin{align}\label{c1}
a_h(\bm{p_h},\bm{q_h})=&\int_{\Omega_1\cup\Omega_2}\alpha^{-1}\bm{p_h}\cdot\bm{q_h}dx+\int_{\Omega_1\cup\Omega_2}\alpha^{-1}\mathrm{div}\bm{p_h}\mathrm{div}\bm{q_h}dx\notag\\
&+\gamma h_K^{-1}\int_{\Gamma}\alpha_{\mathrm{min}}^{-1}[\bm{p_h}\cdot\bm{n}][\bm{q_h}\cdot\bm{n}]ds\notag\\
\leq& \|\alpha^{-\frac{1}{2}}\bm{p_h}\|_{0,\Omega_1\cup\Omega_2}\|\alpha^{-\frac{1}{2}}\bm{q_h}\|_{0,\Omega_1\cup\Omega_2}
+\|\alpha^{-\frac{1}{2}}\mathrm{div}\bm{p_h}\|_{0,\Omega_1\cup\Omega_2}\|\alpha^{-\frac{1}{2}}\mathrm{div}\bm{q_h}\|_{0,\Omega_1\cup\Omega_2}\notag\\
&+\gamma h_K^{-1}\|\alpha_{\mathrm{min}}^{-\frac{1}{2}}[\bm{p_h}\cdot\bm{n}]\|_{-\frac{1}{2},h,\Gamma}h_K^{-1}\|\alpha_{\mathrm{min}}^{-\frac{1}{2}}[\bm{q_h}\cdot\bm{n}]\|_{-\frac{1}{2},h,\Gamma}\notag\\
\leq& C\|\bm{p_h}\|_h\|\bm{q_h}\|_h,
\end{align}
and
\begin{align}\label{c2}
  b_h(\bm{q_h},u_h)=&\int_{\Omega_1\cup\Omega_2}u_h\mathrm{div}\bm{q_h}dx-\int_{\Gamma}\{u_h\}[\bm{q_h}\cdot\bm{n}]ds\notag\\
  \leq&\|\alpha^{-\frac{1}{2}}\mathrm{div}\bm{q_h}\|_{0,\Omega_1\cup\Omega_2}\|\alpha^{\frac{1}{2}}u_h\|_{0,\Omega_1\cup\Omega_2}\notag\\
  &+h_K^{-1}\|\alpha_{\mathrm{min}}^{-\frac{1}{2}}[\bm{q_h}\cdot\bm{n}]\|_{-\frac{1}{2},h,\Gamma}h_K\|\alpha_{\mathrm{min}}^{\frac{1}{2}}\{u_h\}\|_{\frac{1}{2},h,\Gamma}\notag\\
  \leq& \|\bm{q_h}\|_h\|u_h\|_{\star}.
\end{align}
Similarly, we can get
\begin{equation}\label{c3}
   b_h(\bm{p_h},v_h)\leq \|\bm{p_h}\|_h\|v_h\|_{\star}.
\end{equation}
Combining inequalities (\ref{c1})-(\ref{c3}) with Lemma \ref{fskz}, we obtain (\ref{bilicon}).
\end{proof}

\subsection{Stability analysis}
\label{sec:2}
In this part, we are devoted to the stability analysis of the method. Firstly, we need a variant of the trace inequality on the reference element (see \cite{02Hansbo}).

 \begin{lem}\label{tracedl}
   Map a triangle $K$ onto the reference triangle $\tilde{K}$ by an affine map and denote by $\tilde{\Gamma}_{\tilde{K}}$ the corresponding
   image of ~$\Gamma_K$. Under the three assumptions in Section 2, there exists a positive constant $C$, depending on the shape of $\Gamma$ but independent of the
   mesh, such that
   \begin{equation}\label{traceinq}
     \|w\|_{0,\tilde{\Gamma}_{\tilde{K}}}\leq C\|w\|_{0,\tilde{K}}\|w\|_{1,\tilde{K}} \quad\forall w\in H^1(\tilde{K}).
   \end{equation}
  \end{lem}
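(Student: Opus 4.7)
The plan is to establish this multiplicative trace inequality on the reference element by using the graph representation of $\tilde\Gamma_{\tilde K}$ provided by Assumption~3, combined with a one-dimensional interior-trace estimate and the Cauchy--Schwarz inequality.

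First, I would transport Assumption~3 to $\tilde K$ through the affine map so that, in local coordinates $(\xi,\eta)$ on $\tilde K$, the curved segment is the graph of a function $\tilde\delta(\xi)$ over the chord $\tilde\Gamma_{\tilde K,h}$. The bounded curvature of $\Gamma$ (which is what lets Assumption~3 hold on all sufficiently fine meshes, as the text notes) makes $|\tilde\delta'|$ uniformly bounded, so the arc-length factor $\sqrt{1+|\tilde\delta'(\xi)|^2}$ is absorbed into the constant and
\begin{equation*}
\|w\|_{0,\tilde\Gamma_{\tilde K}}^2 \leq C\int_0^{|\tilde\Gamma_{\tilde K,h}|} |w(\xi,\tilde\delta(\xi))|^2\,d\xi.
\end{equation*}

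Next, for each fixed $\xi$ I would apply the standard one-dimensional multiplicative embedding $H^1\hookrightarrow L^\infty$ to the vertical slice $I_\xi\subseteq\tilde K$ that passes through $(\xi,\tilde\delta(\xi))$, giving
\begin{equation*}
|w(\xi,\tilde\delta(\xi))|^2 \leq C\,\|w(\xi,\cdot)\|_{L^2(I_\xi)}\|w(\xi,\cdot)\|_{H^1(I_\xi)},
\end{equation*}
which follows from the fundamental theorem of calculus combined with an integration by parts. Integrating in $\xi$, applying Cauchy--Schwarz, and identifying the two resulting factors with $\|w\|_{0,\tilde K}$ and $\|w\|_{1,\tilde K}$ via Fubini then yields the desired bound (with the norm on $\tilde\Gamma_{\tilde K}$ appearing squared on the left, as is standard for such multiplicative trace estimates).

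The main obstacle is ensuring that $C$ depends only on the shape of $\Gamma$ and not on how the interface slices $\tilde K$: if $\tilde\Gamma_{\tilde K}$ cuts off a very thin sliver, the vertical slices $I_\xi$ might seem to degenerate and weaken the one-dimensional estimate. The rescue is that on the reference element all length scales are $O(1)$ -- precisely the reason the inequality is formulated on $\tilde K$ rather than $K$ -- and Assumptions~1--3 (shape-regularity, a single chord through each cut element, and a graph representation with bounded slope) together ensure the constant depends only on the geometric shape of $\Gamma$, as the statement claims.
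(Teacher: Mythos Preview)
The paper does not supply its own proof of this lemma: the sentence immediately preceding it reads ``we need a variant of the trace inequality on the reference element (see \cite{02Hansbo})'', and the lemma is simply quoted from Hansbo--Hansbo. Your sketch is therefore being compared against the cited argument rather than anything in this paper, and it is indeed the standard route taken there: graph representation via Assumption~3, a one-dimensional interior trace on transverse slices, then Cauchy--Schwarz and Fubini.

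Two remarks. First, you are right that the left-hand side should carry a square; the paper's downstream uses of the lemma (e.g.\ inequality~\eqref{pih3} and the estimates in Theorem~\ref{czwc}) all invoke it in the form $\|w\|_{0,\Gamma_K}^2 \le C(h_K^{-1}\|w\|_{0,K}^2 + h_K|w|_{1,K}^2)$, which is exactly the scaled consequence of $\|w\|_{0,\tilde\Gamma_{\tilde K}}^2 \le C\|w\|_{0,\tilde K}\|w\|_{1,\tilde K}$. Second, your last paragraph correctly flags the only real subtlety, but the resolution ``all length scales on $\tilde K$ are $O(1)$'' is a little soft: the one-dimensional multiplicative bound $|v(\eta_0)|^2 \le |I|^{-1}\|v\|_{L^2(I)}^2 + 2\|v\|_{L^2(I)}\|v'\|_{L^2(I)}$ genuinely carries an $|I|^{-1}$, so one must arrange that the slices $I_\xi$ have length bounded below uniformly in the cut. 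One clean way is to first extend $w$ by a bounded $L^2$- and $H^1$-stable operator to a fixed neighbourhood of $\tilde K$, so that every slice through $\tilde\Gamma_{\tilde K}$ has uniformly positive length; alternatively, choose the slicing direction toward the subregion of $\tilde K$ on the side of the chord opposite the cut-off vertex. With either device your argument goes through with a constant depending only on the reference element and the Lipschitz bound on $\tilde\delta$, hence only on the shape of $\Gamma$.
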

     Then the following results holds.

\begin{lem}\label{pistable}
For any $K\in\mathcal{T}_h$, let $\pi_K: \bm{H}^1(K)\rightarrow \mathbb{BDM}_1(K)$ be the lowest order Brezzi-Douglas-Marini                                                                                                                                $(\mathbb{BDM}_1)$ interpolation operator defined on $K$ (see \cite{1985BDM}). Introducing operators $\pi_h^i: \bm{H}^1(\Omega_{i,h})\rightarrow                                                                                                              \bm{Q}_{h,i}$ with $\pi_h^i|_K=\pi_K$, i=1,2, then there exist positive constants $C_1,~C_2$ such that
\begin{equation}\label{pi1s}
  |\pi_h\bm{p_u}|_{1,K}\leq C_1|\bm{p_u}|_{1,K}, \quad\forall \bm{p_u}\in\bm{H}^1(K),
\end{equation}
and
\begin{equation}\label{pihs}
  \|\pi_h\bm{p_u}\|_h\leq C_2\alpha_{\mathrm{min}}^{-\frac{1}{2}}\|\bm{p_u}\|_{1,\Omega}, \quad\forall \bm{p_u}\in\bm{H}^1(\Omega),
\end{equation}
 where $\pi_h|_{\Omega_{i,h}}=\pi_h^i$.
 \end{lem}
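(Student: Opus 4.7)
The plan is to derive both estimates via a standard scaling argument to the reference triangle together with the structural properties of the BDM interpolation (constant-preservation, divergence-commutativity), and to exploit the fact that on cut elements $\pi_h^1$ and $\pi_h^2$ produce the same polynomial.

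For (\ref{pi1s}), I would pull $\bm{p_u}$ back from $K$ to the reference triangle $\tilde K$ using the Piola transform, which is the natural transformation for $\mathbb{BDM}_1$ and under which $\pi_{\tilde K}$ commutes with the map. On $\tilde K$, since $\mathbb{BDM}_1(\tilde K)$ is finite-dimensional and its degrees of freedom are normal edge moments (which are bounded on $\bm{H}^1(\tilde K)$ by the standard trace theorem, of which Lemma \ref{tracedl} is a variant), a norm-equivalence argument yields $|\pi_{\tilde K}\tilde{\bm{p}}|_{1,\tilde K}\le C\|\tilde{\bm{p}}\|_{1,\tilde K}$. Scaling back to $K$ then gives $|\pi_K\bm{p_u}|_{1,K}\le C(h_K^{-1}\|\bm{p_u}\|_{0,K} + |\bm{p_u}|_{1,K})$. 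To eliminate the $\bm{L}^2$ term and obtain the claimed semi-norm bound, I would replace $\bm{p_u}$ by $\bm{p_u}-\bar{\bm{p_u}}$: since $\mathbb{BDM}_1(K)\supseteq\bm{P}_0(K)$, $\pi_K$ fixes constants and hence $|\pi_K\bm{p_u}|_{1,K} = |\pi_K(\bm{p_u}-\bar{\bm{p_u}})|_{1,K}$; the Poincar\'e inequality $\|\bm{p_u}-\bar{\bm{p_u}}\|_{0,K}\le Ch_K|\bm{p_u}|_{1,K}$ then concludes the argument.

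For (\ref{pihs}), I would bound the three contributions of $\|\pi_h\bm{p_u}\|_h^2$ separately. For the $\bm{L}^2$ term, the standard BDM error estimate $\|\bm{p_u}-\pi_K\bm{p_u}\|_{0,K}\le Ch_K|\bm{p_u}|_{1,K}$ together with the triangle inequality yields $\|\pi_h^i\bm{p_u}\|_{0,\Omega_{i,h}}\le C\|\bm{p_u}\|_{1,\Omega_{i,h}}$; summing over $i=1,2$ and using $\alpha^{-1/2}\le\alpha_{\min}^{-1/2}$ controls this term by $C\alpha_{\min}^{-1/2}\|\bm{p_u}\|_{1,\Omega}$. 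For the divergence term, I would invoke the commutativity $\mathrm{div}\,\pi_K\bm{p} = P_K^0\,\mathrm{div}\,\bm{p}$ (with $P_K^0$ the $L^2$-projection onto $P_0(K)$), so that $\|\mathrm{div}\,\pi_K\bm{p_u}\|_{0,K}\le\|\mathrm{div}\,\bm{p_u}\|_{0,K}\le C|\bm{p_u}|_{1,K}$, giving another $C\alpha_{\min}^{-1/2}|\bm{p_u}|_{1,\Omega}$ bound after summing and applying $\alpha^{-1/2}\le\alpha_{\min}^{-1/2}$.

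The third (interface-jump) contribution in $\|\pi_h\bm{p_u}\|_h$ in fact vanishes identically: every cut element $K\in G_h$ lies in both $\mathcal{T}_{h,1}$ and $\mathcal{T}_{h,2}$, and since $\bm{p_u}\in\bm{H}^1(\Omega)$ is single-valued on $K$, we have $\pi_h^1\bm{p_u}|_K = \pi_K(\bm{p_u}|_K) = \pi_h^2\bm{p_u}|_K$; thus the two sides of $\pi_h\bm{p_u}$ agree as polynomials on $K$ and $[\pi_h\bm{p_u}\cdot\bm{n}]\equiv 0$ on $\Gamma_K$. Combining the three estimates yields (\ref{pihs}). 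The main technical obstacle is the semi-norm on the right-hand side of (\ref{pi1s}), which forces the constant-preservation plus Poincar\'e argument; once that is in hand, the $\bm{H}(\mathrm{div})$-stability (\ref{pihs}) reduces to standard BDM machinery because the interface jump contribution drops out.
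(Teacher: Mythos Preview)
Your proof is correct, and in one respect it is sharper than the paper's. The overall structure (scaling to the reference element, BDM degrees-of-freedom bounds, and the commuting diagram $\mathrm{div}\,\pi_K = Q_K\,\mathrm{div}$) is the same, but there are two notable differences.

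First, for (\ref{pi1s}) the paper simply says the estimate follows ``easily'' from scaling and the trace theorem, without explaining how the full $H^1$-norm on the reference element collapses to the semi-norm $|\bm{p_u}|_{1,K}$ after mapping back. Your explicit constant-preservation plus Poincar\'e step fills that gap and is the standard way to obtain the semi-norm bound. (A small aside: the boundedness of the BDM degrees of freedom uses the ordinary trace theorem on the straight edges of $\tilde K$, not Lemma~\ref{tracedl}, which concerns the curved segment $\tilde\Gamma_{\tilde K}$.)

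Second, for the interface-jump contribution in $\|\pi_h\bm{p_u}\|_h$ the paper inserts $[\bm{p_u}\cdot\bm{n}]=0$, writes the jump as $[(\pi_K\bm{p_u}-\bm{p_u})\cdot\bm{n}]$, and then bounds it using the trace inequality (\ref{traceinq}), the $\mathbb{BDM}_1$ approximation estimate, and (\ref{pi1s}). Your observation that the jump vanishes identically---because $\pi_h^1|_K=\pi_K=\pi_h^2|_K$ and $\bm{p_u}\in\bm{H}^1(\Omega)$ is single-valued on $K$---is entirely correct and makes this part trivial. What the paper's route buys is reusability: the same trace-plus-interpolation argument reappears later (e.g.\ in the inf-sup proof and in Theorem~\ref{czwc}) where the function being interpolated is not single-valued across $\Gamma$, so the jump does \emph{not} vanish and one genuinely needs the error-based bound. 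Your shortcut is cleaner here but does not extend to those situations. For the $\bm{L}^2$-stability you use the triangle inequality plus the standard error estimate, whereas the paper bounds $\|\pi_K\bm{p_u}\|_{0,K}$ directly from the degrees of freedom; both are fine and lead to the same conclusion.
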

\begin{proof}
By scaling arguments, we can obtain
\begin{equation}\label{divsca}
  |\pi_h\bm{p_u}|_{1,K}\leq Ch_K^{-1}|\hat{\pi}_h\bm{\hat{p}_u}|_{1,\hat{K}},
\end{equation}
where $\bm{\hat{p}_u}=P\bm{p_u}$, and $P$ is the Piola's transformation (see III.1.3 in \cite{1991mixfem}).

 Using the definition of $\mathbb{BDM}_1$ interpolation, scaling arguments and the trace theorem on the reference element, we obtain (\ref{pi1s}) easily.
According to the definition of $\mathbb{BDM}_1$ interpolation and scaling arguments, we get
\begin{equation}\label{lsl2}
  \|\pi_K\bm{p_u}\|_{0,K}^2\leq C\bigg(\displaystyle{\sum_{j=1}^3}|\int_{e_j}\bm{p_u}\cdot\bm{n}ds|^2+
  \displaystyle{\sum_{j=1}^3}h^{-2}|\int_{e_j}\bm{p_u}\cdot\bm{n}sds|^2\bigg).
\end{equation}
Combining $(\ref{lsl2})$ with the Cauchy-Schwarz inequality and using the trace theorem, we deduce
\begin{align}
 \|\alpha_i^{-\frac{1}{2}}\pi_K\bm{p_u}\|_{0,K}^2\notag
  \leq& C\alpha_i^{-1}\bigg(\displaystyle{\sum_{j=1}^3}|\int_{e_j}\bm{p_u}\cdot\bm{n}ds|^2
  +\displaystyle{\sum_{j=1}^3}h^{-2}|\int_{e_j}\bm{p_u}\cdot\bm{n}sds|^2\bigg)\notag\\
  \leq&C\alpha_i^{-1}h_K\displaystyle{\sum_{j=1}^3}\|\bm{p_u}\cdot\bm{n}\|_{0,e_j}^2\notag\\
  \leq&C\alpha_i^{-1}\bigg(\|\bm{p_u}\|_{0,K}^2+h_K^2|\bm{p_u}|_{1,K}^2\bigg)\label{pi1}.
 \end{align}
Summing over the triangles $K\in\mathcal{T}_h$, we get
\begin{equation}
  \|\alpha^{-\frac{1}{2}}\pi_h\bm{p_u}\|_{0,\Omega_1\cup\Omega_2}^2 \leq C\alpha_{\mathrm{min}}^{-1}\|\bm{p_u}\|_{1,\Omega}^2\label{pi2}.
\end{equation}

Assuming that $Q_K$ is the $L^2$-projection on $L^2(K)$, using the commuting property (see Proposition 3.7 in \cite{1991mixfem}) on element $K$
\begin{equation}\label{bdmjh}
  \mathrm{div}\pi_K\bm{p_u}=Q_K\mathrm{div}\bm{p_u},
\end{equation}
  and the stability of  the $L^2$-projection $Q_K$, we derive
 \begin{align}\label{pih2}
    \|\alpha^{-\frac{1}{2}}\mathrm{div}\pi_h\bm{p_u}\|_{0,\Omega_1\cup\Omega_2}^2\leq&\displaystyle{\sum_{i=1}^2}\displaystyle{\sum_{K\in\mathcal{T}_{h,i}}}
  \|\alpha_i^{-\frac{1}{2}}\mathrm{div}\pi_K\bm{p_u}\|_{0,K}^2\notag\\
  =&\displaystyle{\sum_{i=1}^2}\displaystyle{\sum_{K\in\mathcal{T}_{h,i}}}\|\alpha_i^{-\frac{1}{2}}Q_K\mathrm{div}\bm{p_u}\|_{0,K}^2\notag\\
  \leq&\displaystyle{\sum_{i=1}^2}\displaystyle{\sum_{K\in\mathcal{T}_{h,i}}}\|\alpha_i^{-\frac{1}{2}}\mathrm{div}\bm{p_u}\|_{0,K}^2\notag\\
  \leq& C\alpha_{\mathrm{min}}^{-1}\|\bm{p_u}\|_{1,\Omega}^2.
  \end{align}

Using the fact $[\bm{p_u}\cdot\bm{n}]|_{\Gamma}=0$, the trace inequality (\ref{traceinq}), the standard $\mathbb{BDM}_1$ interpolation estimates and (\ref{pi1s}), we obtain
 \begin{align}\label{pih3}
   h_K^{-2}\|\alpha_{\mathrm{min}}^{-\frac{1}{2}}[\pi_h\bm{p_u}\cdot\bm{n}]\|_{-\frac{1}{2},h,\Gamma}^2=&\displaystyle{\sum_{K\in G_h}}h_K^{-1}\alpha_{\mathrm{min}}^{-1}\|[(\pi_K\bm{p_u}-\bm{p_u})\cdot\bm{n}]\|_{0,\Gamma_K}^2\notag\\
   \leq&C\displaystyle{\sum_{K\in G_h}}h_K^{-1}\alpha_{\mathrm{min}}^{-1}\bigg(h_K^{-1}\|\pi_K\bm{p_u}-\bm{p_u}\|_{0,K}^2
   +h_K|\pi_K\bm{p_u}-\bm{p_u}|_{1,K}^2\bigg)\notag\\
   \leq&C\displaystyle{\sum_{K\in G_h}}\alpha_{\mathrm{min}}^{-1}\bigg(h_K^{-2}h_K^2\|\bm{p_u}\|_{1,K}^2+|\pi_K\bm{p_u}-\bm{p_u}|_{1,K}^2\bigg)\notag\\
   \leq& C \alpha_{\mathrm{min}}^{-1}\|\bm{p_u}\|_{1,\Omega}^2,
    \end{align}
   \textcolor[rgb]{0.00,0.00,0.00}{which together with (\ref{pi2}) and (\ref{pih2}) yields the inequality (\ref{pihs}).}
\end{proof}

\begin{thm}\label{lbb}
         Let $(\bm{p_h},u_h)\in\bm{Q}_h\times V_h$, for sufficiently small $h$, there exists a positive constant $C_s$ which is independent of $h$ and how the interface intersects the triangulation,
          such that
         \begin{equation}\label{lbbineq}
           \underset{(\bm{q_h},v_h)\in\bm{Q}_h\times V_h}{\mathrm{sup}}
           \dfrac{L(\bm{p_h},u_h;\bm{q_h},v_h)}{\|(\bm{q_h},v_h)\|_h}
           \geq C_s\|(\bm{p_h},u_h)\|_h,
         \end{equation}
         where $L(\bm{p_h},u_h;\bm{q_h},v_h)=B_h(\bm{p_h},u_h;\bm{q_h},v_h)
           +\gamma_1J_1(u_h,v_h)+\gamma_2J_2(u_h,v_h)$.
       \end{thm}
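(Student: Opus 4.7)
The plan is a standard Babuska--Brezzi-style two-step argument, adapted to the cut-cell setting. Given $(\bm{p_h},u_h)\in\bm{Q}_h\times V_h$, I will test against a pair $(\bm{q_h},v_h)=(\bm{p_h}+\delta\tilde{\bm{q}}_h,\,u_h)$ with $\delta>0$ small, where $\tilde{\bm{q}}_h\in\bm{Q}_h$ is tailored to $u_h$ through an auxiliary elliptic problem and supplies the $L^2$ control of $u_h$ via $b_h$. The diagonal pair $(\bm{p_h},u_h)$ delivers the coercivity of $a_h$ and the two jump terms, since the off-diagonal $b_h$-contributions cancel:
\[
L(\bm{p_h},u_h;\bm{p_h},u_h) = a_h(\bm{p_h},\bm{p_h})+\gamma_1 J_1(u_h,u_h)+\gamma_2 J_2(u_h,u_h) \ge c_0\bigl(\|\bm{p_h}\|_h^2+J_1(u_h,u_h)+J_2(u_h,u_h)\bigr),
\]
since the three pieces of $a_h$ map one-to-one onto the three pieces of $\|\bm{p_h}\|_h^2$.

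For the perturbation, let $\tilde u_h\in L^2(\Omega)$ be the natural piecewise function with $\tilde u_h|_{\Omega_i}:=u_h^i$ and let $w\in H^1_0(\Omega)$ solve $-\Delta w = -\alpha\tilde u_h$ in $\Omega$. By convexity of $\Omega$ and elliptic regularity, $w\in H^2(\Omega)$ with $\|w\|_{2,\Omega}\le C\|\alpha\tilde u_h\|_{0,\Omega}$. Setting $\bm{p_u}:=\nabla w\in\bm{H}^1(\Omega)$ gives $\mathrm{div}\bm{p_u}=\alpha\tilde u_h$ and $[\bm{p_u}\cdot\bm{n}]|_\Gamma=0$. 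Take $\tilde{\bm{q}}_h:=\pi_h\bm{p_u}\in\bm{Q}_h$; Lemma \ref{pistable} then yields $\|\tilde{\bm{q}}_h\|_h\le C\,\alpha_{\mathrm{min}}^{-1/2}\|\bm{p_u}\|_{1,\Omega}\le C\|\alpha^{1/2}u_h\|_{0,\Omega_1\cup\Omega_2}$, with a constant independent of $h$ and of how the interface cuts the mesh.

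The heart of the argument is the lower bound
\[
b_h(\tilde{\bm{q}}_h,u_h) \ge c_1\|\alpha^{1/2}u_h\|_{0,\Omega_1\cup\Omega_2}^2 - C_1\bigl(J_1(u_h,u_h)+J_2(u_h,u_h)\bigr).
\]
By the commuting property $\mathrm{div}\pi_K\bm{p_u}=Q_K\mathrm{div}\bm{p_u}$ recalled inside the proof of Lemma \ref{pistable}, $\mathrm{div}\tilde{\bm{q}}_h$ equals $\alpha u_h$ on every uncut triangle, and on each $K\in G_h$ it coincides with the single $L^2(K)$-average $\{\alpha u_h\}$. The volumetric integral $\int_{\Omega_1\cup\Omega_2}u_h\,\mathrm{div}\tilde{\bm{q}}_h\,dx$ therefore equals $\|\alpha^{1/2}u_h\|_{0,\Omega_1\cup\Omega_2}^2$ plus a sum over $G_h$ of residues proportional to $|K|\,k^1k^2(\alpha_1u_h^1-\alpha_2u_h^2)[u_h]$. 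The interface term $-\int_\Gamma\{u_h\}[\tilde{\bm{q}}_h\cdot\bm{n}]\,ds$ collapses to a $\mathbb{BDM}_1$ interpolation-error integral because $[\bm{p_u}\cdot\bm{n}]|_\Gamma=0$; it is controlled by Lemma \ref{tracedl}, the standard $\mathbb{BDM}_1$ estimates used already in Lemma \ref{pistable}, and Young's inequality in terms of $J_2(u_h,u_h)$.

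The main obstacle is the volumetric cut-cell residue: on a sliver $K_i$ with $|K_i|\ll h_K^2$ the product $|K|\,k^1k^2(\alpha_1u_h^1-\alpha_2u_h^2)[u_h]$ is \emph{not} dominated by $J_2$ alone, since the weight $\alpha_{\mathrm{min}}h_K|\Gamma_K|$ can degenerate with $|\Gamma_K|$. This is exactly what the ghost-penalty term $J_1$ handles: a chain argument along consecutive cut edges in $\mathcal{F}_{h,i}^{cut}$ links the value of $u_h^i$ on the sliver $K_i$ to its value on a full triangle $K^\star\in\mathcal{T}_{h,i}$ lying entirely inside $\Omega_i$ (so $|K^\star|\sim h^2$), so that the cut-cell residue is bounded by $J_1(u_h,u_h)$ with a constant independent of how $\Gamma$ meets $\mathcal{T}_h$. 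With the lower bound for $b_h(\tilde{\bm{q}}_h,u_h)$ in hand, I expand
\[
L(\bm{p_h},u_h;\bm{p_h}+\delta\tilde{\bm{q}}_h,u_h) = a_h(\bm{p_h},\bm{p_h})+\delta a_h(\bm{p_h},\tilde{\bm{q}}_h)+\delta b_h(\tilde{\bm{q}}_h,u_h)+\gamma_1 J_1(u_h,u_h)+\gamma_2 J_2(u_h,u_h),
\]
apply Young's inequality to the cross term using Lemma \ref{continu} together with $\|\tilde{\bm{q}}_h\|_h\le C\|u_h\|_\star$, and choose $\delta>0$ small and $\gamma,\gamma_1,\gamma_2$ sufficiently large that every resulting coefficient is positive. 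This yields $L(\bm{p_h},u_h;\bm{q_h},v_h)\ge C\|(\bm{p_h},u_h)\|_h^2$, while $\|(\bm{q_h},v_h)\|_h\le C\|(\bm{p_h},u_h)\|_h$ is immediate from the triangle inequality, which together deliver (\ref{lbbineq}).
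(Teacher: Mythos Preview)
Your overall framework is the paper's: test with $(\bm{p_h}+\delta\,\pi_h\bm{p_u},\,u_h)$ and get $\|\bm{p_h}\|_h^2+J_1+J_2$ from the diagonal, $\|\alpha^{1/2}u_h\|_0^2$ from the perturbation. The gap is in how you extract the latter from $b_h(\pi_h\bm{p_u},u_h)$.

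You try to evaluate $b_h(\pi_h\bm{p_u},u_h)$ directly, computing the volume integral via the commuting property and bounding the interface integral $-\int_\Gamma\{u_h\}[\pi_h\bm{p_u}\cdot\bm{n}]\,ds$ ``in terms of $J_2(u_h,u_h)$''. That last claim is wrong: the interface term carries the weighted \emph{average} $\{u_h\}$, whereas $J_2$ only controls the \emph{jump} $[u_h]$. With Lemma~\ref{inve} and the $\mathbb{BDM}_1$ estimates you invoke, the best you can get is
\[
\Bigl|\int_\Gamma\{u_h\}\bigl[(\pi_h\bm{p_u}-\bm{p_u})\cdot\bm{n}\bigr]\,ds\Bigr|
\ \le\ C\,\alpha_{\mathrm{min}}^{-1/2}\|\alpha^{1/2}u_h\|_{0,\Omega_1\cup\Omega_2}\cdot\|\bm{p_u}\|_{1,\Omega}
\ \le\ C\,\|\alpha^{1/2}u_h\|_{0,\Omega_1\cup\Omega_2}^2,
\]
a term of the \emph{same order} as the positive contribution you want to keep, with an uncontrolled constant. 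Young's inequality cannot shrink this; neither can a ghost-penalty chain through $J_1$, since no jump of $u_h$ appears. (Your volumetric ``cut-cell residue'' is in fact harmless---$|K|k^1k^2\le C\,h_K|\Gamma_K|$ always holds, so it is dominated by $J_2$; the real obstruction is the interface term.)

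The paper avoids this by first splitting $b_h(\pi_h\bm{p_u},u_h)=b_h(\bm{p_u},u_h)+b_h(\pi_h\bm{p_u}-\bm{p_u},u_h)$ with $\mathrm{div}\,\bm{p_u}=\alpha_{\mathrm{min}}u_h$ (a constant coefficient, not $\alpha$), so that the first piece is exactly $\alpha_{\mathrm{min}}\|u_h\|_0^2$. On the second piece one integrates by parts cellwise: because $u_h$ is piecewise constant, everything becomes boundary integrals. On full edges the $\mathbb{BDM}_1$ moment conditions kill the contribution; what remains is (i) integrals over cut edge segments $e_i$, producing the $J_1$ term, and (ii) an interface integral which, after the identity $[u_h\,\psi]=\{u_h\}[\psi]+[u_h]\{\psi\}_\ast$, \emph{cancels} the built-in $-\int_\Gamma\{u_h\}[\cdot]$ of $b_h$ and leaves only $\int_\Gamma[u_h]\{(\pi_h\bm{p_u}-\bm{p_u})\cdot\bm{n}\}_\ast$, now genuinely controlled by $J_2^{1/2}$. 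That cancellation is the step your proposal is missing, and it is precisely where both $J_1$ and $J_2$ enter.
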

\begin{proof}
 Noting the definition of $a_h(\bm{p_h},\bm{p_h})$, we get
 \begin{align*}
 a_h(\bm{p_h},\bm{p_h})=&\int_{\Omega_1\cup\Omega_2}\alpha^{-1}|\bm{p_h}|^2dx+\int_{\Omega_1\cup\Omega_2}\alpha^{-1}|\mathrm{div}\bm{p_h}|^2dx
+\gamma h_K^{-1}\int_{\Gamma}\alpha_{\mathrm{min}}^{-1}|[\bm{p_h}\cdot\bm{n}]|^2ds\\
\geq& C_p\|\bm{p_h}\|_h^2,
 \end{align*}
 \textcolor[rgb]{0.00,0.00,0.00}{
which implies}
 \begin{equation*}
   B_h(\bm{p_h},u_h;\bm{p_h},u_h)\geq C_p\|\bm{p_h}\|_h^2.
 \end{equation*}

 By Lemma 11.2.3 in \cite{bookbrener}, we know that for any $u_h\in V_h$, there exists a function $\bm{p_u}\in \bm{H}^1(\Omega)$, such that $\mathrm{div}\bm{p_u}=\alpha_{\mathrm{min}}u_h,$ and $\|\bm{p_u}\|_{1,\Omega}\leq c\alpha_{\mathrm{min}}\|u_h\|_{0,\Omega}$. Taking $(\bm{q_h},v_h)=(\pi_h\bm{p_u},0)$, where $\pi_h$ is the interpolation operator defined in Lemma \ref{pistable}, leads to
\begin{equation}\label{bh1}
  B_h(\bm{p_h},u_h;\pi_h\bm{p_u},0)=a_h(\bm{p_h},\pi_h\bm{p_u})+b_h(\pi_h\bm{p_u}-\bm{p_u},u_h)+\alpha_{\mathrm{min}}\|u_h\|_{0,\Omega_1\cup\Omega_2}^2.
\end{equation}
Applying the Cauchy-Schwarz inequality and the stability of $\pi_h$ (\ref{pihs}), we derive
\begin{align}\label{bh2}
  a_h(\bm{p_h},\pi_h\bm{p_u})\geq& -C\|\bm{p_h}\|_h\|\pi_h\bm{p_u}\|_h
  \geq -C\|\bm{p_h}\|_h\alpha_{\mathrm{min}}^{-\frac{1}{2}}\|\bm{p_u}\|_{1,\Omega}\notag\\
  \geq&-C_a\|\bm{p_h}\|_h\alpha_{\mathrm{min}}^{\frac{1}{2}}\|u_h\|_{0,\Omega}\notag\\
  \geq& -C_a\|\bm{p_h}\|_h\|\alpha^{\frac{1}{2}}u_h\|_{0,\Omega_1\cup\Omega_2}.
\end{align}
Combining Green's formula with the definition of the interpolation operator $\pi_h$, we obtain
\begin{align}\label{bh3}
  b_h(\pi_h\bm{p_u}-\bm{p_u},u_h)=&\int_{\Omega_1\cup\Omega_2}u_h\mathrm{div}(\pi_h\bm{p_u}-\bm{p_u})dx
  -\int_{\Gamma}\{u_h\}[(\pi_h\bm{p_u}-\bm{p_u})\cdot\bm{n}]ds\notag\\
  =&\displaystyle{\sum_{i=1}^2}\displaystyle{\sum_{e_i\in\mathcal{F}_{h,i}^{cut}}}\int_{e_i}[u_h]_{e_i}(\pi_h\bm{p_u}-\bm{p_u})\cdot\bm{n}ds
  +\int_{\Gamma}\{(\pi_h\bm{p_u}-\bm{p_u})\cdot\bm{n}\}_*[u_h]ds\notag\\
  =&I+II.
\end{align}
Using the Cauchy-Schwarz inequality, the trace inequality, the approximability of $\pi_h$ and $(\ref{pi1s})$, we get
\begin{align}\label{bh31}
  I\leq &\displaystyle{\sum_{i=1}^2}\displaystyle{\sum_{e_i\in\mathcal{F}_{h,i}^{cut}}}\|[u_h]_{e_i}\|_{0,e_i}\|(\pi_h\bm{p_u}-\bm{p_u})\cdot\bm{n}\|_{0,e_i}\notag\\
  \leq&\bigg(\displaystyle{\sum_{i=1}^2}\displaystyle{\sum_{e_i\in\mathcal{F}_{h,i}^{cut}}}\|[u_h]_{e_i}\|_{0,e_i}^2\bigg)^{\frac{1}{2}}
  \bigg(\displaystyle{\sum_{i=1}^2}\displaystyle{\sum_{e\in\mathcal{F}_{h,i}}}\|(\pi_h\bm{p_u}-\bm{p_u})\cdot\bm{n}\|_{0,e}^2\bigg)^{\frac{1}{2}}\notag\\
  \leq&C_1\bigg(\displaystyle{\sum_{i=1}^2}\displaystyle{\sum_{e_i\in\mathcal{F}_{h,i}^{cut}}}\|[u_h]_{e_i}\|_{0,e_i}^2\bigg)^{\frac{1}{2}}
  \bigg(\displaystyle{\sum_{i=1}^2}\displaystyle{\sum_{K\in\mathcal{T}_{h,i}}}\big(h_{K}^{-1}\|\pi_K\bm{p_u}-\bm{p_u}\|_{0,K}^2
  +h_{K}|\pi_K\bm{p_u}-\bm{p_u}|_{1,K}^2\big)\bigg)^{\frac{1}{2}}\notag\\
  \leq&C_1\bigg(\displaystyle{\sum_{i=1}^2}\displaystyle{\sum_{e_i\in\mathcal{F}_{h,i}^{cut}}}\|[u_h]_{e_i}\|_{0,e_i}^2\bigg)^{\frac{1}{2}}
  \bigg(\displaystyle{\sum_{i=1}^2}\displaystyle{\sum_{K\in\mathcal{T}_{h,i}}}h_{K}|\bm{p_u}|_{1,K}^2\bigg)^{\frac{1}{2}}\notag\\
  \leq& C_1J_1(u_h,u_h)^{\fr{1}{2}}\|\alpha^{\fr{1}{2}}u_h\|_{0,\Omega_1\cup\Omega_2},
  \end{align}
  where $e$ is the whole edge containing the cut segment $e_i$. Similarly, we obtain
\begin{align}\label{bh32}
  II\leq&\displaystyle{\sum_{K\in G_h}}\|[u_h]\|_{0,\Gamma_K}\|\{(\pi_K\bm{p_u}-\bm{p_u})\cdot\bm{n}\}_{*}\|_{0,\Gamma_K}\notag\\
  \leq&C_2\displaystyle{\sum_{K\in G_h}}\|[u_h]\|_{0,\Gamma_K}h_K^{\frac{1}{2}}\|\bm{p_u}\|_{1,K}\notag\\
  \leq&C_2\bigg(\displaystyle{\sum_{K\in G_h}}\alpha_{\mathrm{min}}h_K\|[u_h]\|_{0,\Gamma_K}^2\bigg)^{\fr{1}{2}}
  \bigg(\displaystyle{\sum_{K\in G_h}}\alpha_{\mathrm{min}}^{-1}\|\bm{p_u}\|_{1,K}^2\bigg)^{\fr{1}{2}}\notag\\
  \leq&C_2J_2(u_h,u_h)^{\fr{1}{2}}\|\alpha^{\fr{1}{2}}u_h\|_{0,\Omega_1\cup\Omega_2}.
\end{align}
Thus, using (\ref{bh1})-(\ref{bh32}) and the arithmetic-geometric inequality, we deduce
\begin{align*}
   B_h(\bm{p_h},u_h;\pi_h\bm{p_u},0)=&a_h(\bm{p_h},\pi_h\bm{p_u})+b_h(\pi_h\bm{p_u}-\bm{p_u},u_h)+\alpha_{\mathrm{min}}\|u_h\|_{0,\Omega}^2\\
   \geq& -C_a\|\bm{p_h}\|_h\|\alpha^{\fr{1}{2}}u_h\|_{0,\Omega_1\cup\Omega_2}
   -C_1J_1(u_h,u_h)^{\fr{1}{2}}\|\alpha^{\fr{1}{2}}u_h\|_{0,\Omega_1\cup\Omega_2}\\
   &-C_2J_2(u_h,u_h)^{\fr{1}{2}}\|\alpha^{\fr{1}{2}}u_h\|_{0,\Omega_1\cup\Omega_2}+\alpha_{\mathrm{min}}\|u_h\|_{0,\Omega}^2\\
   \geq&\dfrac{\alpha_{\mathrm{min}}}{\alpha_{\mathrm{max}}}\|\alpha^{\fr{1}{2}}u_h\|_{0,\Omega_1\cup\Omega_2}^2-\bigg(C_a\|\bm{p_h}\|_h\\
   &+C_1J_1(u_h,u_h)^{\fr{1}{2}}+C_2J_2(u_h,u_h)^{\fr{1}{2}}\bigg)\|\alpha^{\fr{1}{2}}u_h\|_{0,\Omega_1\cup\Omega_2}\\
   \geq&\dfrac{\alpha_{\mathrm{min}}}{2\alpha_{\mathrm{max}}}\|\alpha^{\fr{1}{2}}u_h\|_{0,\Omega_1\cup\Omega_2}^2
   -C_a^2\dfrac{\alpha_{\mathrm{max}}}{2\alpha_{\mathrm{min}}}\|\bm{p_h}\|_h^2\\
   &-C_1^2\dfrac{\alpha_{\mathrm{max}}}{2\alpha_{\mathrm{min}}}J_1(u_h,u_h)-C_2^2\dfrac{\alpha_{\mathrm{max}}}{2\alpha_{\mathrm{min}}}J_2(u_h,u_h).
\end{align*}

Then the following inequality holds
\begin{align*}
   &B_h(\bm{p_h},u_h;\bm{p_h}+\varepsilon\pi_h\bm{p_u},u_h)+\gamma_1J_1(u_h,u_h)+\gamma_2J_2(u_h,u_h)\\
  =&B_h(\bm{p_h},u_h;\bm{p_h},u_h)
  +B_h(\bm{p_h},u_h;\varepsilon\pi_h\bm{p_u},0)+\gamma_1J_1(u_h,u_h)+\gamma_2J_2(u_h,u_h)\\
   \geq&C_p\|\bm{p_h}\|_h^2+\varepsilon\bigg(\dfrac{\alpha_{\mathrm{min}}}{2\alpha_{\mathrm{max}}}\|\alpha^{\fr{1}{2}}u_h\|_{0,\Omega_1\cup\Omega_2}^2
   -C_a^2\dfrac{\alpha_{\mathrm{max}}}{2\alpha_{\mathrm{min}}}\|\bm{p_h}\|_h^2\\
   &-C_1^2\dfrac{\alpha_{\mathrm{max}}}{2\alpha_{\mathrm{min}}}J_1(u_h,u_h)-C_2^2\dfrac{\alpha_{\mathrm{max}}}{2\alpha_{\mathrm{min}}}J_2(u_h,u_h)\bigg)+\gamma_1J_1(u_h,u_h)+\gamma_2J_2(u_h,u_h)\\
   \geq&\bigg(C_p-\varepsilon C_a^2\dfrac{\alpha_{\mathrm{max}}}{2\alpha_{\mathrm{min}}}\bigg)\|\bm{p_h}\|_h^2
   +\varepsilon\dfrac{\alpha_{\mathrm{min}}}{2\alpha_{\mathrm{max}}}\|\alpha^{\fr{1}{2}}u_h\|_{0,\Omega_1\cup\Omega_2}^2\\
   &+\bigg(\gamma_1-\varepsilon C_1^2\dfrac{\alpha_{\mathrm{max}}}{2\alpha_{\mathrm{min}}}\bigg)J_1(u_h,u_h)+\bigg(\gamma_2-\varepsilon C_2^2\dfrac{\alpha_{\mathrm{max}}}{2\alpha_{\mathrm{min}}}\bigg)J_2(u_h,u_h),
\end{align*}
\textcolor[rgb]{0.00,0.00,0.00}{setting $\varepsilon=\dfrac{C_p}{C_a^2}\dfrac{\alpha_{\mathrm{min}}}{\alpha_{\mathrm{max}}}$, $\gamma_1 =\dfrac{C_pC_1^2}{C_a^2} $, $\gamma_2 =\dfrac{C_pC_2^2}{C_a^2}$, we derive that
\begin{equation}
  B_h(\bm{p_h},u_h;\bm{p_h}+\varepsilon\pi_h\bm{p_u},u_h)+\gamma_1J_1(u_h,u_h)+\gamma_2J_2(u_h,u_h)\geq C_s\|(\bm{p_h},u_h)\|_h^2,
\end{equation}
where $C_s = \mathrm{min}\bigg\{\dfrac{C_p}{2}, \dfrac{C_p}{2}\dfrac{C_1^2}{C_a^2}, \dfrac{C_p}{2}\dfrac{C_2^2}{C_a^2},\dfrac{C_p}{2C_a^2}\dfrac{\alpha_{\mathrm{min}}^2}{\alpha_{\mathrm{max}}^2} \bigg\}$.}

Now we prove $\|(\bm{p_h}+\varepsilon\pi_h\bm{p_u},u_h)\|_h\leq \|(\bm{p_h},u_h)\|_h$. In view of the definition of $\|\cdot\|_h$, we only need to prove that
$$\|\bm{p_h}+\varepsilon\pi_h\bm{p_u}\|_h\leq C\|(\bm{p_h},u_h)\|_h.$$
Indeed, by the triangle inequality and the stability of $\pi_h$, we have
\begin{align*}
 \|\bm{p_h}+\varepsilon\pi_h\bm{p_u}\|_h^2&\leq 2\bigg(\|\bm{p_h}\|_h^2+\varepsilon^2\|\pi_h\bm{p_u}\|_h^2\bigg)
 \leq 2\bigg(\|\bm{p_h}\|_h^2+C\varepsilon^2\alpha_{\mathrm{min}}^{-1}\|\bm{p_u}\|_{1,\Omega}^2\bigg)\\
 &\leq 2\bigg(\|\bm{p_h}\|_h^2+C\varepsilon^2\alpha_{\mathrm{min}}\|u_h\|_{0,\Omega}^2\bigg)
 \leq  C\|(\bm{p_h},u_h)\|_h^2.
 \end{align*}
That's to say, for any $(\bm{p_h},u_h)\in\bm{Q}_h\times V_h$, there exists $(\bm{p_h}+\varepsilon\pi_h\bm{p_u},u_h)\in\bm{Q}_h\times V_h$, such that
\begin{equation*}
             B_h(\bm{p_h},u_h;\bm{p_h}+\varepsilon\pi_h\bm{p_u},u_h)
           +\gamma_1J_1(u_h,u_h)+\gamma_2J_2(u_h,u_h)\geq C_s\|(\bm{p_h},u_h)\|_h^2,
\end{equation*}
and
$$\|(\bm{p_h}+\varepsilon\pi_h\bm{p_u},u_h)\|_h\leq C\|(\bm{p_h},u_h)\|_h.$$
Then the result $(\ref{lbbineq})$ is obtained.
\end{proof}

According to Lemma \ref{continu} and Theorem \ref{lbb}, by the Babu\v{s}ka Theorem (cf. \cite{1991mixfem}, \cite{1986NS}), the following theorem holds.

 \begin{thm}
      The problem (\ref{femwf}) has a unique solution $(\bm{p_h},u_h)\in\bm{Q}_h\times V_h$.
     \end{thm}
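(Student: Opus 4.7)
The plan is to invoke the Babuška theorem in its finite-dimensional form, for which the two essential ingredients have already been supplied: continuity of the full bilinear form $L$ (Lemma \ref{continu}) and the discrete inf-sup condition (Theorem \ref{lbb}). Since $\bm{Q}_h\times V_h$ is finite-dimensional and the trial and test spaces coincide, the square linear system encoded by (\ref{femwf}) admits a unique solution as soon as it is injective; thus it suffices to show uniqueness.

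To show uniqueness, I would argue by contradiction in the standard way: suppose $(\bm{p_h},u_h)\in\bm{Q}_h\times V_h$ satisfies the homogeneous version of (\ref{femwf}), i.e., $L(\bm{p_h},u_h;\bm{q_h},v_h)=0$ for every $(\bm{q_h},v_h)\in\bm{Q}_h\times V_h$ (this corresponds to taking $f=0$, so that $l_h(\bm{q_h},v_h)=0$). Applying the discrete inf-sup inequality (\ref{lbbineq}),
\begin{equation*}
C_s\|(\bm{p_h},u_h)\|_h\leq \sup_{(\bm{q_h},v_h)\in\bm{Q}_h\times V_h}\frac{L(\bm{p_h},u_h;\bm{q_h},v_h)}{\|(\bm{q_h},v_h)\|_h}=0,
\end{equation*}
so $\|(\bm{p_h},u_h)\|_h=0$. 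Since $\|\cdot\|_h$ controls $\|\alpha^{-1/2}\bm{p_h}\|_{0,\Omega_1\cup\Omega_2}$ and $\|\alpha^{1/2}u_h\|_{0,\Omega_1\cup\Omega_2}$, it is genuinely a norm on $\bm{Q}_h\times V_h$, so $(\bm{p_h},u_h)=(\bm{0},0)$. Injectivity of the discrete operator together with the finite dimension of the trial/test space then yields existence and uniqueness for arbitrary right-hand side $l_h$.

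There is no real obstacle here: the delicate work, namely producing the Fortin-type operator $\pi_h$ (Lemma \ref{pistable}), verifying its $\|\cdot\|_h$-stability independently of the interface configuration, and using it in the stability argument of Theorem \ref{lbb}, has already been done. The present theorem is essentially a bookkeeping corollary of Lemma \ref{continu} and Theorem \ref{lbb} via the Babuška framework as cited in \cite{1991mixfem, 1986NS}.
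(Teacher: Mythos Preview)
Your proposal is correct and follows exactly the paper's approach: the paper simply invokes the Babu\v{s}ka theorem (citing \cite{1991mixfem,1986NS}) on the strength of Lemma~\ref{continu} and Theorem~\ref{lbb}, and you have merely spelled out the standard finite-dimensional argument behind that invocation.
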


  \section{Approximation properties and optimal convergence}
\label{sec:1}
\textcolor[rgb]{0.00,0.00,0.00}{In order to construct the interpolation operator for the extended finite element space}, firstly we need to introduce a bounded linear extension operator for vector spaces.

       Motivated by the construction of extension operators for functions in Sobolev spaces $H^k(\Omega)$ (\cite{yantuo}, \cite{exten}), we propose a new extension for functions in the space $\bm{H}^1(\mathrm{div};\Omega)$. This new extension plays a crucial role in the subsequent error estimate for the interface problems.
       The following extension theorem can be found in \cite{exten} (Theorem 1, Sect. 5.4).
       \begin{thm}\label{h2yt}
         ($H^2$-extension theorem) Assume that $U$ is a connected bounded domain in $\mathbb{R}^2$ with $C^2$-smooth boundary $\partial U$. Choose a bounded open set $V$ such that $U\subset\subset V$. Then there exists a bounded linear extension operator
         \begin{equation*}
           E: H^2(U)\rightarrow H^2(\mathbb{R}^2),
         \end{equation*}
       such that for any scalar function $u\in H^2(U)$
   \begin{enumerate}
     \item  $Eu=u$ a.e. in $U$, and $Eu$ has a support within $V$;
     \item   $\|Eu\|_{H^2(\mathbb{R}^2)}\leq C\|u\|_{H^2(U)}$ with $C=C(U,V)>0$.
   \end{enumerate}
       \end{thm}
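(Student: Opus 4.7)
The plan is to follow the classical Calder\'on--Stein construction by straightening the boundary, reflecting to higher order, and patching via a partition of unity. Since the statement is for $H^2$ (so a single reflection is not enough to preserve first derivatives across the boundary), the key device will be a weighted combination of reflections designed to cancel the trace jumps of both $u$ and $\nabla u$.

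First, I would reduce to a local problem. Cover $\partial U$ by finitely many balls $B_1,\dots,B_N$ so small that, by the $C^2$-regularity of $\partial U$, in each $B_j$ there is a $C^2$-diffeomorphism $\Phi_j$ straightening $\partial U\cap B_j$ to a piece of $\{x_2=0\}$, sending $U\cap B_j$ into $\{x_2>0\}$. Add one more open set $B_0\subset\subset U$ covering the interior, and take a partition of unity $\{\zeta_j\}_{j=0}^N$ subordinate to $\{B_j\}$ with $\mathrm{supp}\,\zeta_j\subset\subset V$. It then suffices to build a bounded extension of each $\zeta_j u$, since the final extension is $Eu=\sum_j E(\zeta_j u)$.

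Next, the core step: extension across a flat boundary. For $v\in H^2(\mathbb{R}^2_+)$ with compact support in a half-ball, I would define
\begin{equation*}
\tilde v(x_1,x_2)=\begin{cases} v(x_1,x_2), & x_2\ge 0,\\ -3\,v(x_1,-x_2)+4\,v(x_1,-x_2/2), & x_2<0.\end{cases}
\end{equation*}
The coefficients $-3$ and $4$ are chosen so that the traces of $\tilde v$ and $\partial_{x_2}\tilde v$ match from the two sides on $\{x_2=0\}$: evaluating gives $\tilde v|_{0^-}=(-3+4)v|_{0^+}=v|_{0^+}$ and $\partial_{x_2}\tilde v|_{0^-}=(3-2)\partial_{x_2}v|_{0^+}=\partial_{x_2}v|_{0^+}$. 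Since $\tilde v$ is $C^1$ across $\{x_2=0\}$ and each piece lies in $H^2$, a standard distributional computation shows $\tilde v\in H^2(\mathbb{R}^2)$ with $\|\tilde v\|_{H^2(\mathbb{R}^2)}\le C\|v\|_{H^2(\mathbb{R}^2_+)}$, the constant being absolute.

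Finally, I would pull back through $\Phi_j$ to define the local extension of $\zeta_j u$ on a neighborhood of $\overline{U}\cap B_j$: the boundedness of the pull-back operator between $H^2$ spaces follows from the $C^2$-regularity of $\Phi_j$ and the chain rule, so one obtains $\|E(\zeta_j u)\|_{H^2(\mathbb{R}^2)}\le C_j\|\zeta_j u\|_{H^2(U\cap B_j)}\le C_j'\|u\|_{H^2(U)}$. Multiplying by a cutoff supported in $V$ preserves the $H^2$-bound, ensures the support property, and $Eu=u$ on $U$ holds automatically because each local piece equals $\zeta_j u$ there. Summing and setting $C=\sum_j C_j'$ yields the claim with $E$ linear by construction.

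The main obstacle is verifying that the higher-order reflection actually produces an $H^2$ function, i.e., that the jump of $\partial_{x_2}\tilde v$ across $\{x_2=0\}$ vanishes in the trace sense and not merely pointwise; this is the reason the $-3/4$ linear combination (rather than a simple even/odd reflection, which only works for $H^1$) is forced. Everything else is bookkeeping with the partition of unity and chain-rule estimates under the $C^2$ straightening map.
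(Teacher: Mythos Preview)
Your argument is correct and is precisely the standard Calder\'on--Stein construction. Note, however, that the paper does not actually prove this theorem: it simply states it and cites Evans, \emph{Partial Differential Equations}, Section~5.4, Theorem~1, whose proof is exactly the straighten--reflect--patch scheme you outlined (higher-order reflection to match traces of $u$ and $\partial_{x_2}u$, then partition of unity and $C^2$ change of variables). So your proposal coincides with the cited source, and there is nothing further to compare.
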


 In order to preserve the similar properties in Theorem \ref{h2yt}, vector fields must be extended in a more delicate way. Given a vector $\bm{u}\in\bm{H}^1(\mathrm{div};U)$, we wish to extend $\bm{u}$ to $\tilde{\bm{u}}\in\bm{H}^1(\mathrm{div};\mathbb{R}^2)$. For a scalar function $p\in H^2(U)$, we have $\bm{rot}~p\in \bm{H}^1(\mathrm{div};U)$. It looks possible to define an $\bm{H}^1(\mathrm{div})$-extension operator $\bm{E}_{\mathrm{div}}$ satisfying the following commuting diagram property (\cite{jhtu})
        \begin{equation}\label{jhtu}
         \bm{E}_{\mathrm{div}}(\bm{rot}~p)=\bm{rot}(Ep),
       \end{equation}
where $\bm{rot}~p=(\dfrac{\partial p}{\partial x_2},-\dfrac{\partial p}{\partial x_1})^T$, $T$ represents the transposition.

If $\bm{E}_{\mathrm{div}}$ satisfies (\ref{jhtu}), it is obvious that the operator $\bm{E}_{\mathrm{div}}$ preserves the $\mathrm{div}$-free property of a rot field in $U$. While for general vector fields, we can make use of (\ref{jhtu}) to construct a general extension operator $\bm{E}_{\mathrm{div}}$.

With the motivation above, now we establish an $\bm{H}^1(\mathrm{div})$-extension operator across the $C^2$-smooth boundary.
       \begin{thm}\label{divth}
         $\big(\bm{H}^1(\mathrm{div})$-extension~theorem\big) Assume that $U$ is a connected bounded domain in $\mathbb{R}^2$ with $C^2$-smooth boundary. Then there exists a bounded linear extension operator
         \begin{equation*}
           \bm{E}_{\mathrm{div}}: \bm{H}^2(U)\cap\bm{H}^1(\mathrm{div};U)\rightarrow \bm{H}^2(\mathbb{R}^2)\cap\bm{H}^1(\mathrm{div};\mathbb{R}^2),
         \end{equation*}
         such that, for each $\bm{u}\in \bm{H}^2(U)\cap\bm{H}^1(\mathrm{div};U)$, it holds that
         \begin{enumerate}
           \item $\bm{E}_{\mathrm{div}}\bm{u}=\bm{u}~~a.e.~in~U;$
           \item $\|\mathrm{div}(\bm{E}_{\mathrm{div}}\bm{u})\|_{H^1(\mathbb{R}^2)}\leq C\|\mathrm{div}\bm{u}\|_{H^1(U)}$,
               with the constant $C$ depending only on $U$;
            \item  $\|\bm{E}_{\mathrm{div}}\bm{u}\|_{\bm{H}^s(\mathbb{R}^2)}\leq c \|\bm{u}\|_{\bm{H}^s(U)},~s=1,2$, with the constant $c$ depending only on $U$.
         \end{enumerate}
       \end{thm}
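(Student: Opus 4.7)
The plan is to construct $\bm{E}_{\mathrm{div}}$ via a Helmholtz-type decomposition $\bm{u} = \nabla\phi + \bm{rot}\,\psi$ on $U$, extend the two scalar potentials separately using the scalar extension operator $E$ of Theorem \ref{h2yt} (applied at an appropriately higher order), and then define
\begin{equation*}
\bm{E}_{\mathrm{div}}\bm{u} := \nabla(E\phi) + \bm{rot}(E\psi).
\end{equation*}
The motivation is exactly the commuting-diagram property (\ref{jhtu}): since $\mathrm{div}(\bm{rot}\,q) \equiv 0$ for every scalar $q$, the $\bm{rot}$ component of the extension contributes nothing to the divergence, so every term in $\mathrm{div}(\bm{E}_{\mathrm{div}}\bm{u})$ comes from $\Delta(E\phi)$, which can be controlled solely by $\|\mathrm{div}\,\bm{u}\|_{1,U}$ rather than by the full $\|\bm{u}\|_{2,U}$.

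Concretely, I would first choose $\phi\in H^3(U)$ as the solution of $\Delta\phi = \mathrm{div}\,\bm{u}$ in $U$ with $\phi = 0$ on $\partial U$, using standard elliptic regularity to obtain $\|\phi\|_{3,U} \le C\|\mathrm{div}\,\bm{u}\|_{1,U}$. The residual field $\bm{v} := \bm{u} - \nabla\phi$ is then divergence-free in $\bm{H}^2(U)$, and since $U$ is simply connected a stream function $\psi\in H^3(U)$ with $\bm{v} = \bm{rot}\,\psi$ and $\|\psi\|_{3,U}\le C\|\bm{u}\|_{2,U}$ exists. Applying the scalar extension operator to $\phi$ and $\psi$ produces $E\phi,E\psi \in H^3(\mathbb{R}^2)$ of compact support in a fixed $V\supset\supset U$ with the estimates $\|E\phi\|_{3,\mathbb{R}^2}\le C\|\phi\|_{3,U}$ and $\|E\psi\|_{3,\mathbb{R}^2}\le C\|\psi\|_{3,U}$.

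With these pieces assembled, the three properties follow directly. Property (1) is immediate because $E$ reproduces its argument on $U$, so $\bm{E}_{\mathrm{div}}\bm{u}|_U = \nabla\phi + \bm{rot}\,\psi = \bm{u}$. Property (3) follows by distributing one derivative onto the extended potentials, giving $\|\bm{E}_{\mathrm{div}}\bm{u}\|_{s,\mathbb{R}^2}\le C(\|E\phi\|_{s+1,\mathbb{R}^2}+\|E\psi\|_{s+1,\mathbb{R}^2})\le C\|\bm{u}\|_{s,U}$ for $s=1,2$. Property (2) follows from $\mathrm{div}\circ\bm{rot}\equiv 0$ and the chain
\begin{equation*}
\|\mathrm{div}(\bm{E}_{\mathrm{div}}\bm{u})\|_{1,\mathbb{R}^2} = \|\Delta(E\phi)\|_{1,\mathbb{R}^2} \le C\|E\phi\|_{3,\mathbb{R}^2} \le C\|\phi\|_{3,U} \le C\|\mathrm{div}\,\bm{u}\|_{1,U},
\end{equation*}
which is crucially bounded by $\|\mathrm{div}\,\bm{u}\|_{1,U}$ and not by the full $\bm{H}^2$-norm of $\bm{u}$, as (2) demands.

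The main obstacle I expect is the higher-order extension: Theorem \ref{h2yt} is stated only at the $H^2$ level, whereas the argument above needs an $H^3$-extension of $\phi$ to move $\Delta\phi$ in $H^1$ across $\partial U$. Under a genuinely $C^2$ boundary this is delicate in general, but because $\phi$ was constructed to satisfy $\phi = 0$ on $\partial U$, an odd-reflection construction in flattened local coordinates suffices and does not actually require a $C^3$ boundary. The remaining subtleties -- the existence and regularity of the stream function (requiring $U$ to be simply connected, which one may assume up to enlarging $U$ by its bounded holes) and the fact that the identity (\ref{jhtu}) is to be checked on all of $\mathbb{R}^2$ rather than only in $U$ -- reduce to standard facts about the two-dimensional de Rham sequence and the linearity of $E$.
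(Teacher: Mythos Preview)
Your approach via a Helmholtz decomposition is conceptually clean and captures the commuting-diagram idea (\ref{jhtu}) that the paper itself highlights as motivation. However, the paper does \emph{not} carry it out this way, and under the stated $C^2$ hypothesis your route runs into real regularity obstacles that the paper's direct construction is designed to sidestep.

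The concrete gaps are these. First, the elliptic regularity step $\|\phi\|_{3,U}\le C\|\mathrm{div}\,\bm{u}\|_{1,U}$ for the Dirichlet problem $\Delta\phi=\mathrm{div}\,\bm{u}$, $\phi|_{\partial U}=0$, generally requires $\partial U\in C^{2,1}$ (or $C^3$); a merely $C^2$ boundary is not enough for $H^3$ regularity. Second, the scalar $H^3$ extension you need is likewise beyond Theorem~\ref{h2yt}, and your proposed fix --- odd reflection using $\phi|_{\partial U}=0$ --- does not produce an $H^3$ function: after flattening, odd reflection matches $\phi$, $\partial_{x_1}\phi$, $\partial_{x_2}\phi$, $\partial_{x_1}^2\phi$ and $\partial_{x_1 x_2}\phi$ across $\{x_2=0\}$, but $\partial_{x_2}^2\phi|_{x_2=0}=\Delta\phi|_{x_2=0}=\mathrm{div}\,\bm{u}|_{x_2=0}$ is generically nonzero, so the second normal derivative jumps and the reflected function is only in $H^2$. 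Third, the stream-function step needs $U$ simply connected, which is not assumed; filling in holes changes the domain on which the operator is supposed to act.

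The paper avoids all of this by building the extension \emph{directly at the vector level}. In the flat half-ball it uses a four-term Seeley reflection, but with \emph{different} weights on the two components (an extra factor $-1/j$ on $w^1$), chosen precisely so that $\mathrm{div}\,\tilde{\bm{w}}$ on $B^-$ is expressible solely through $\mathrm{div}\,\bm{w}$ on $B^+$. This yields $\tilde{\bm{w}}\in C^1(B)$ and $\mathrm{div}\,\tilde{\bm{w}}\in C^0(B)$ with the separate bounds $\|\mathrm{div}\,\tilde{\bm{w}}\|_{1,B}\lesssim\|\mathrm{div}\,\bm{w}\|_{1,B^+}$ and $\|\tilde{\bm{w}}\|_{s,B}\lesssim\|\bm{w}\|_{s,B^+}$, $s=1,2$, without ever invoking $H^3$ regularity of anything or a Helmholtz decomposition. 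The passage to curved $C^2$ boundaries then goes through the usual flattening and partition of unity, staying entirely at the $H^2$ level. If you are willing to strengthen the hypothesis to $C^3$ and simply connected, your argument can be completed; under the stated $C^2$ hypothesis, it does not close.
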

\begin{proof}
          Without loss of generality, for a fixed $\bm{x}^0\in\partial U$, we first suppose that $\partial U$ is flat near $\bm{x}^0$ which is lying in the plane $\{\bm{x}=(x_1,x_2)|x_2=0\}$. Assume that there exists an open ball
         \begin{align*}
          B=\{\bm{x}|~|\bm{x-x^0}|\leq r\},
         \end{align*}
         with the center $\bm{x^0}$ and the radius $r>0$ such that

         \begin{equation}
                  \left\{
         \begin{array}{l}
         B^{+}=B\cap \{x_2\geq 0\}\subset\overline{U},\\
         B^{-}=B\cap\{x_2<0\}\subset \mathrm{R}^2\backslash\overline{U}.
          \end{array}
         \right.
         \end{equation}
         Assuming $p\in C^\infty(\overline{U})$, we obtain an extension $E^Bp$ of $p$ from  $B^{+}$ to $B$ as follows,
         \begin{equation}
                  E^Bp=\tilde{p}(\bm{x})=\left\{
         \begin{array}{lcr}
         p(\bm{x}), &\mathrm{if} \quad\bm{x}\in B^{+},\\
         \displaystyle{\sum_{j=1}^4}\lambda_jp(x_1,-\dfrac{x_2}{j}),&\mathrm{if} \quad\bm{x}\in B^{-},
          \end{array}
         \right.
         \end{equation}
         where $(\lambda_1,\lambda_2,\lambda_3,\lambda_4)$ is the solution of the 4$\times$4 system of linear equations
         \begin{equation}\label{lambd}
           \displaystyle{\sum_{j=1}^4}(-\dfrac{1}{j})^k\lambda_j=1,\quad k=0,1,2,3.
         \end{equation}
         It is readily checked that
        \begin{equation*}
          \tilde{p}\in C^2(\overline{B}).
        \end{equation*}

       Now we define an extension $\bm{E}_{\mathrm{div}}^B(\bm{rot}~p)$ of $\bm{rot}~p$ from $B^{+}$ to $B$ as
       \begin{equation}\label{rot}
                 \bm{E}_{\mathrm{div}}^B(\bm{rot}~p)=\widetilde{\bm{rot}~p}=\left\{
         \begin{array}{lcr}
         \bm{rot}~p, &\mathrm{if} \quad\bm{x}\in B^{+},\\
         \bm{rot}~\tilde{p},&\mathrm{if} \quad\bm{x}\in B^{-},
          \end{array}
         \right.
         \end{equation}
         i.e.,

         \begin{equation}\label{rotp}
                 \widetilde{\bm{rot}~p}=\left\{
         \begin{array}{lcr}
         \left(
         \begin{array}{cc}
         p_{x_2}\\
         -p_{x_1}
        \end{array}
        \right), &\mathrm{if} \quad\bm{x}\in B^{+},\\
        \left(
        \begin{array}{cc}
        \displaystyle{\sum_{j=1}^4}-\dfrac{\lambda_j}{j}p_{x_2}(x_1,-\dfrac{x_2}{j})\\
        -\displaystyle{\sum_{j=1}^4}\lambda_jp_{x_1}(x_1,-\dfrac{x_2}{j})
        \end{array}
        \right),
         &\mathrm{if} \quad\bm{x}\in B^{-},
          \end{array}
         \right.
         \end{equation}
    where $p_{x_i}=\dfrac{\partial p}{\partial x_i}, ~i=1, 2$. We can see (\ref{jhtu}) holds from (\ref{rot}).
         Based on the component form of $\widetilde{\bm{rot}~p}$ in (\ref{rotp}), we derive an extension formula for a vector-valued function
          $\bm{w}=(w^1,w^2)^T\in C^\infty(B^{+})$ as follows
          \begin{equation}\label{wtilte}
                 \tilde{\bm{w}}(\bm{x})=\left(
        \begin{array}{cc}
        \tilde{w}^1(\bm{x})\\
        \tilde{w}^2(\bm{x})
        \end{array}
        \right)=\left\{
         \begin{array}{lcr}
        \bm{w}(\bm{x}), &\mathrm{if} \quad\bm{x}\in B^{+},\\
        \left(
        \begin{array}{cc}
        \displaystyle{\sum_{j=1}^4}-\dfrac{\lambda_j}{j}w^1(x_1,-\dfrac{x_2}{j})\\
        \displaystyle{\sum_{j=1}^4}\lambda_jw^2(x_1,-\dfrac{x_2}{j})
        \end{array}
        \right),
         &\mathrm{if} \quad\bm{x}\in B^{-}.
          \end{array}
         \right.
         \end{equation}
          We claim $\tilde{\bm{w}}\in C^1(B)$ and thus $\mathrm{div}\tilde{\bm{w}}\in C^0(B)$. This can be demonstrated by a detailed calculation. Indeed according to (\ref{lambd}) and (\ref{wtilte}), we have
         \begin{align*}
           \displaystyle{\lim_{x_2\rightarrow 0^+}}\tilde{w}^i(\bm{x})&=\displaystyle{\lim_{x_2\rightarrow 0^-}}\tilde{w}^i(\bm{x}), ~i=1,2,\\
           \displaystyle{\lim_{x_2\rightarrow 0^+}}\tilde{w}^i_{x_j}(\bm{x})&=\displaystyle{\lim_{x_2\rightarrow 0^-}}\tilde{w}^i_{x_j}(\bm{x}), ~i,j=1,2.
         \end{align*}

         With this extension $\tilde{\bm{w}}$ on hand, now we prove
         \begin{equation}\label{div}
          \|\mathrm{div}\tilde{\bm{w}}\|_{H^1(B)}\leq \|\mathrm{div}\bm{w}\|_{H^1(B^+)},
       \end{equation}
       and
       \begin{equation}
        \|\tilde{\bm{w}}\|_{H^s(B)}\leq \|\bm{w}\|_{H^s(B^+)},~s=1,2.
       \end{equation}
       In fact
       \begin{align}
         \int_B|\mathrm{div}\tilde{\bm{w}}(\bm{x})|^2d \bm{x}=&\int_{B^-}\bigg|\displaystyle{\sum_{j=1}^4}-\dfrac{\lambda_j}{j}w^1_{x_1}(x_1,-\dfrac{x_2}{j})
         +\displaystyle{\sum_{j=1}^4}-\dfrac{\lambda_j}{j}w^2_{x_2}(x_1,-\dfrac{x_2}{j})\bigg|^2d \bm{x}\notag\\
         &+\int_{B^+}\big|w^1_{x_1}+w^2_{x_2}\big|^2d\bm{x} .\label{div1}
       \end{align}
       Let $\tilde{x}_2=-\dfrac{x_2}{j}$, $\tilde{\bm{x}}=(x_1,\tilde{x}_2)$, by (\ref{lambd}), we derive
       \begin{align}
         &\int_{B^-}\bigg|\displaystyle{\sum_{j=1}^4}-\dfrac{\lambda_j}{j}w^1_{x_1}(x_1,-\dfrac{x_2}{j})
         +\displaystyle{\sum_{j=1}^4}-\dfrac{\lambda_j}{j}w^2_{x_2}(x_1,-\dfrac{x_2}{j})\bigg|^2d \bm{x}\notag\\
         \leq&\int_{B^+}\bigg|\displaystyle{\sum_{j=1}^4}-\dfrac{\lambda_j}{j}w^1_{x_1}(x_1,\tilde{x}_2)
         +\displaystyle{\sum_{j=1}^4}(-\dfrac{1}{j})^2\lambda_jw^2_{\tilde{x}_2}(x_1,\tilde{x}_2)\bigg|^2jd \tilde{\bm{x}}\notag\\
         \leq& C \int_{B^+}\big|w^1_{x_1}+w^2_{x_2}\big|^2d \bm{x}=C\int_{B^+}|\mathrm{div}\bm{w}|^2d \bm{x}.\label{div2}
       \end{align}
       Combining (\ref{div1}) with (\ref{div2}), we obtain (\ref{div}).
       Similarly, we get
       \begin{align*}
         \int_B|\bm{grad}\big(\mathrm{div}\tilde{\bm{w}}(\bm{x})\big)|^2d \bm{x}=&\displaystyle{\sum_{k=1}^2}\int_{B^+}|w^1_{x_1,x_k}+w^2_{x_2,x_k}|^2d \bm{x}\\
         &+\displaystyle{\sum_{k=1}^2}\int_{B^-}\bigg|\displaystyle{\sum_{j=1}^4}(-1)^k\dfrac{\lambda_j}{j^{\lfloor (k+2)/2\rfloor}}w^1_{x_1,x_k}(x_1,-\dfrac{x_2}{j})\\
         &+\displaystyle{\sum_{j=1}^4}(-1)^k\dfrac{\lambda_j}{j^{\lfloor (k+2)/2\rfloor}}w^2_{x_2,x_k}(x_1,-\dfrac{x_2}{j})\bigg|^2d \bm{x}\\
         \leq& C \displaystyle{\sum_{k=1}^2}\int_{B^+}\big|w^1_{x_1,x_k}+w^2_{x_2,x_k}\big|^2d \bm{x},
       \end{align*}
then we have $\|\mathrm{div}\tilde{\bm{w}}\|_{\bm{H}^1(B)}\leq C\|\mathrm{div}\bm{w}\|_{\bm{H}^1(B^{+})}$.

      Combining the same argument as the above with the Cauchy-schwarz inequality, for $\alpha=(\alpha_1,\alpha_2), ~|\alpha|\leq 2$, we deduce
         \begin{align*}
           \int_B|D^\alpha\tilde{\bm{w}}(\bm{x})|^2d \bm{x}=&\int_{B^+}\bigg(|D^\alpha w^1(\bm{x})|^2+|D^\alpha w^2(\bm{x})|^2\bigg)d \bm{x}\\
           &+\int_{B^-}\bigg(\big|\displaystyle{\sum_{j=1}^4}(-\dfrac{1}{j})^{\alpha_2+1}\lambda_jD^\alpha w^1(x_1,-\dfrac{x_2}{j})|^2\\
           &+\big|\displaystyle{\sum_{j=1}^4}(-\dfrac{1}{j})^{\alpha_2}\lambda_jD^\alpha w^2(x_1,-\dfrac{x_2}{j})\big|^2\bigg)d \bm{x}\\
           \leq& C
           \int_{B^+}|D^\alpha\bm{w}(\bm{x})|^2d \bm{x}.
         \end{align*}
         Thus, we obtain $\|\tilde{\bm{w}}\|_{\bm{H}^s(B)}\leq C\|\bm{w}\|_{\bm{H}^s(B^{+})},~s=1,2$. It means the extension
$\bm{E}_{\mathrm{div}}^B$ has the following properties:
\begin{enumerate}
 \item $\bm{E}_{\mathrm{div}}^B\bm{w}=\bm{w}~~a.e.~in~~ B^{+};$
 \item $\|\mathrm{div}(\bm{E}_{\mathrm{div}}^B\bm{w})\|_{H^1(B)}\leq C\|\mathrm{div}\bm{w}\|_{H^1(B^{+})}$;
 \item  $\|\bm{E}_{\mathrm{div}}^B\bm{w}\|_{\bm{H}^s(B)}\leq c \|\bm{w}\|_{\bm{H}^s(B^{+})},~s=1,2$.
\end{enumerate}

         For the general case, arguing as Theorem 3.2 in \cite{divex},
           we complete the proof.
         \end{proof}

Noting the fact that the interface $\Gamma$ is at least $C^2$-smooth and arguing similarly as that in Theorem \ref{divth}, we can derive the following corollary.
\begin{corollary}\label{omegayt}
  There exist a bounded linear operator for i=1,2,
  \begin{equation}\label{diviex}
    \bm{E}_{\mathrm{div}}^i: \bm{H}^2(\Omega_i)\cap\bm{H}^1(\mathrm{div};\Omega_i)\rightarrow \bm{H}^2(\Omega)\cap\bm{H}^1(\mathrm{div};\Omega),
  \end{equation}
  such that for each $\bm{u}\in \bm{H}^2(\Omega_i)\cap\bm{H}^1(\mathrm{div};\Omega_i)$
\begin{enumerate}
  \item $\bm{E}_{\mathrm{div}}^i\bm{u}=\bm{u}$~~a.e.~in~$\Omega_i$;
  \item   $\|\mathrm{div}(\bm{E}_{\mathrm{div}}^i\bm{u})\|_{H^1(\Omega)}\leq C\|\mathrm{div}\bm{u}\|_{H^1(\Omega_i)}$, with the constant $C$ depending only on $\Omega_i$;
  \item $\|\bm{E}_{\mathrm{div}}^i\bm{u}\|_{\bm{H}^s(\Omega)}\leq c \|\bm{u}\|_{\bm{H}^s(\Omega_i)},~s=1,2$, with the constant $c$ depending only on $\Omega_i$.
\end{enumerate}
\end{corollary}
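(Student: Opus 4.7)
The plan is to reduce the proof to the local, half-ball construction of Theorem \ref{divth} by straightening $\Gamma$ in finitely many coordinate patches and transporting vector fields through a Piola transform. Since the target space here is $\bm{H}^2(\Omega)\cap\bm{H}^1(\mathrm{div};\Omega)$ rather than $\bm{H}^2(\mathbb{R}^2)\cap\bm{H}^1(\mathrm{div};\mathbb{R}^2)$, extension is only required across the $C^2$-smooth interface $\Gamma$; no work is needed near the exterior boundary $\partial\Omega$.

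First, I would cover $\Gamma$ by finitely many open balls $B_1,\dots,B_N$ such that on each $B_k$ there is a $C^2$-diffeomorphism $\Phi_k:B_k\to\hat{B}_k$ carrying $\Gamma\cap B_k$ onto a flat segment $\{\hat{x}_2=0\}\cap\hat{B}_k$ and $\Omega_i\cap B_k$ onto the upper half-ball $\hat{B}_k^{+}$. Given $\bm{u}\in\bm{H}^2(\Omega_i)\cap\bm{H}^1(\mathrm{div};\Omega_i)$, I would push $\bm{u}|_{B_k\cap\Omega_i}$ to a vector $\hat{\bm{u}}_k$ on $\hat{B}_k^{+}$ via the Piola transform associated with $\Phi_k$. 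Because $\Phi_k$ is $C^2$, this transform intertwines divergence according to the standard identity $\mathrm{div}\,\hat{\bm{u}}_k=\det(D\Phi_k)\,(\mathrm{div}\,\bm{u})\circ\Phi_k^{-1}$ and preserves the $\bm{H}^2$ regularity together with the $\bm{H}^1(\mathrm{div})$ bound (with constants depending only on $\Phi_k$). Next apply the half-ball extension operator $\bm{E}_{\mathrm{div}}^{B}$ of Theorem \ref{divth} componentwise to $\hat{\bm{u}}_k$ to obtain $\widetilde{\hat{\bm{u}}}_k$ on all of $\hat{B}_k$, and pull back by the inverse Piola transform to get a local extension of $\bm{u}$ on $B_k$.

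Second, I would glue these local extensions together with a smooth partition of unity $\{\zeta_k\}_{k=0}^{N}$ subordinate to the cover consisting of the balls $\{B_k\}_{k=1}^{N}$ and one additional open set $B_0$ containing $\overline{\Omega_i}\setminus\bigcup_k B_k$, defining $\bm{E}_{\mathrm{div}}^{i}\bm{u}$ as $\zeta_0\bm{u}$ on $B_0$ plus the sum of the locally constructed pieces, finally truncated by a cutoff compactly supported in $\Omega$. Property (1) follows because each local term coincides with $\bm{u}$ on $\Omega_i$ and $\sum_k\zeta_k\equiv 1$ there; properties (2) and (3) follow by summing the local estimates (\ref{div})--(\ref{wtilte}) of Theorem \ref{divth}, applying the Leibniz rule to the partition-of-unity cutoffs, and using the $C^1$-boundedness of the Jacobians of $\Phi_k$ and $\Phi_k^{-1}$.

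The main obstacle is verifying that the Piola pull-back and push-forward preserve the $\bm{H}^1(\mathrm{div})$ structure through the change of variables, i.e.\ that for a $C^2$-diffeomorphism $\Phi_k$ both $\|\hat{\bm{u}}_k\|_{2,\hat{B}_k^{+}}$ and $\|\mathrm{div}\,\hat{\bm{u}}_k\|_{1,\hat{B}_k^{+}}$ are equivalent to $\|\bm{u}\|_{2,B_k\cap\Omega_i}$ and $\|\mathrm{div}\,\bm{u}\|_{1,B_k\cap\Omega_i}$, respectively. Once this equivalence is in hand, together with the explicit divergence-preservation of the reflection formula (\ref{rotp}), the three conclusions of the corollary follow from summing the local estimates already established in the proof of Theorem \ref{divth}.
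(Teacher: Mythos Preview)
Your overall strategy---straighten $\Gamma$ in local charts, transport by a Piola map, apply the half-ball reflection of Theorem~\ref{divth}, and glue with a partition of unity---is exactly what the paper intends: its entire argument for the corollary is the one line ``arguing similarly as that in Theorem~\ref{divth}'', and inside that theorem the curved-boundary case is deferred to~\cite{divex}. So the plan matches the paper.

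There is, however, a real gap in your gluing step for property~(2). Writing $\bm{w}_k$ for the local extension on $B_k$, the Leibniz rule gives
\[
\mathrm{div}\Bigl(\sum_k \zeta_k \bm{w}_k\Bigr)=\sum_k \zeta_k\,\mathrm{div}\,\bm{w}_k+\sum_k \nabla\zeta_k\cdot\bm{w}_k .
\]
On the extended side $\Omega\setminus\Omega_i$ the reflections $\bm{w}_k$ depend on the chart $\Phi_k$ and do \emph{not} agree on overlaps, so the second sum does not cancel; it is controlled only by $\|\bm{u}\|_{1,\Omega_i}$, not by $\|\mathrm{div}\,\bm{u}\|_{1,\Omega_i}$. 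Hence ``summing the local estimates and applying the Leibniz rule'' yields at best $\|\mathrm{div}(\bm{E}_{\mathrm{div}}^i\bm{u})\|_{1,\Omega}\le C\bigl(\|\mathrm{div}\,\bm{u}\|_{1,\Omega_i}+\|\bm{u}\|_{1,\Omega_i}\bigr)$, which is weaker than the stated property~(2). The same defect reappears at your ``final cutoff''. This is precisely why scalar partitions of unity are inadequate for $\bm{H}(\mathrm{div})$-structured extensions. One clean remedy is to separate the divergence first: extend $g:=\mathrm{div}\,\bm{u}$ as a scalar to $G\in H^1(\Omega)$, choose $\bm{v}\in\bm{H}^2(\Omega)$ with $\mathrm{div}\,\bm{v}=G$, write the divergence-free remainder $\bm{u}-\bm{v}|_{\Omega_i}$ as $\bm{rot}\,p$, extend the stream function $p$ by a scalar $H^3$-extension, and set $\bm{E}_{\mathrm{div}}^i\bm{u}=\bm{v}+\bm{rot}(Ep)$. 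This realizes the commuting relation~(\ref{jhtu}) globally and gives property~(2) directly.

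A secondary issue concerns the obstacle you do flag. The Piola map under a $C^2$-diffeomorphism multiplies by the $C^1$ matrix $J^{-1}D\Phi_k$, and multiplication by a merely $C^1$ function is not bounded on $H^2$. So the equivalence $\|\hat{\bm{u}}_k\|_{2}\simeq\|\bm{u}\|_{2}$ actually requires $\Phi_k\in C^3$, i.e.\ a $C^3$ interface rather than $C^2$; the paper's own deferral to~\cite{divex} carries the same hidden regularity requirement.
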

 \begin{rmk}
 In the above analysis, we used the assumption that the interface $\Gamma$ is at least $C^2$-smooth to present an $\bm{H}^1(\text{div})$-extension. If the interface is nonsmooth, the trace inequality on the interface $\Gamma$ in Lemma \ref{tracedl} and Theorem \ref{divth} may not hold, and the theoretical analysis may have trouble.
\end{rmk}
Similar to Theorem \ref{h2yt}, there is an extension operator $E_i: H^s(\Omega_i)\rightarrow H^s(\Omega)$ such that $(E_iv)|_{\Omega_i}=v$ and
           \begin{equation*}
             \|E_iv\|_{s,\Omega}\leq C\|v\|_{s,\Omega_i},\quad\forall v\in H^s(\Omega_i), s=0,1.
          \end{equation*}

            Now, we turn to introducing the interpolation operator. Let $\Pi_h$ be the standard $\mathbb{BDM}_1$ interpolation operator and $I_h$ be the projection onto the space of piecewise constant functions associated with $\mathcal{T}_h$. Define
                      \begin{equation}\label{chazhi}
             (\Pi_{h}^{*}\bm{q},I_h^{*}v)=\bigg((\Pi_{h,1}^*\bm{q_1},\Pi_{h,2}^*\bm{q_2}), (I_{h,1}^*v_1,I_{h,2}^*v_2)\bigg),
           \end{equation}
           where $\Pi_{h,i}^*\bm{q_i}=(\Pi_h\bm{E}_{\mathrm{div}}^i\bm{q_i})|_{\Omega_i}$ and $I_{h,i}^*v_i=(I_hE_iv_i)|_{\Omega_i}$.

            \begin{thm}\label{czwc}
              For all $\bm{p}\in \bm{H}^1(\mathrm{div};\Omega_1\cup\Omega_2)\cap\bm{H}^2(\Omega_1\cup\Omega_2)$ and $u\in H^1(\Omega_1\cup\Omega_2)$, let $(\Pi_h^*, I_h^*)$ be the interpolation operator defined in $(\ref{chazhi})$, it is true that
              \begin{equation*}
                \|(\bm{p}-\Pi_h^*\bm{p},u-I_h^*u)\|\leq C h\bigg(\|\alpha_{\mathrm{min}}^{-\fr{1}{2}}\bm{p}\|_{2,\Omega_1\cup\Omega_2}+\|\alpha^{\fr{1}{2}}u\|_{1,\Omega_1\cup\Omega_2}
                +\|\alpha^{-\fr{1}{2}}\mathrm{div}\bm{p}\|_{1,\Omega_1\cup\Omega_2}\bigg),
                              \end{equation*}
                          where $C$ is independent of $h$ and how the interface intersects the triangulation.
                          \end{thm}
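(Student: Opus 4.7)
The plan is to reduce every term in $\|(\bm{p}-\Pi_h^*\bm{p},u-I_h^*u)\|$ to standard element-wise BDM$_1$ and piecewise-constant interpolation estimates, by first globalizing $\bm{p}$ and $u$ on each subdomain through the extension operators. Concretely, for $i=1,2$, I set $\tilde{\bm{p}}_i=\bm{E}_{\mathrm{div}}^i(\bm{p}|_{\Omega_i})$ and $\tilde u_i=E_i(u|_{\Omega_i})$, which by Corollary \ref{omegayt} and the scalar extension theorem lie in $\bm{H}^2(\Omega)\cap\bm{H}^1(\mathrm{div};\Omega)$ and $H^1(\Omega)$, respectively, with the stability bounds $\|\tilde{\bm{p}}_i\|_{2,\Omega}\leq C\|\bm{p}\|_{2,\Omega_i}$, $\|\mathrm{div}\,\tilde{\bm{p}}_i\|_{1,\Omega}\leq C\|\mathrm{div}\bm{p}\|_{1,\Omega_i}$ and $\|\tilde u_i\|_{1,\Omega}\leq C\|u\|_{1,\Omega_i}$. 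By definition in (\ref{chazhi}), $\Pi_{h,i}^*\bm{p}_i=(\Pi_h\tilde{\bm{p}}_i)|_{\Omega_i}$ and $I_{h,i}^*u_i=(I_h\tilde u_i)|_{\Omega_i}$, so on every $K\in\mathcal{T}_{h,i}$ we may apply standard element-wise estimates to the extended, globally regular functions.

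Next I split $\|(\bm{p}-\Pi_h^*\bm{p},u-I_h^*u)\|^2$ into its five constituent pieces coming from the definitions of $\|\cdot\|_h$ and $\|\cdot\|_\star$. For the interior pieces, the BDM$_1$ bound $\|\tilde{\bm{p}}_i-\Pi_h\tilde{\bm{p}}_i\|_{0,K}\leq Ch^2|\tilde{\bm{p}}_i|_{2,K}$, together with the commuting property $\mathrm{div}\,\Pi_K=Q_K\mathrm{div}$ already used in (\ref{bdmjh}) and the $L^2$-projection stability, gives $\|\mathrm{div}(\tilde{\bm{p}}_i-\Pi_h\tilde{\bm{p}}_i)\|_{0,K}\leq Ch|\mathrm{div}\tilde{\bm{p}}_i|_{1,K}$, while $\|\tilde u_i-I_h\tilde u_i\|_{0,K}\leq Ch|\tilde u_i|_{1,K}$. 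Summing over $K\in\mathcal{T}_{h,i}$, pulling out the constant weights $\alpha_i^{\pm 1/2}$, and invoking the extension stabilities produces the three volume-type contributions $Ch\|\alpha^{-1/2}\bm{p}\|_{2,\Omega_1\cup\Omega_2}$, $Ch\|\alpha^{-1/2}\mathrm{div}\bm{p}\|_{1,\Omega_1\cup\Omega_2}$ and $Ch\|\alpha^{1/2}u\|_{1,\Omega_1\cup\Omega_2}$.

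For the two interface terms, I use that $[\bm{p}\cdot\bm{n}]=0$ on $\Gamma$ to write, for each $K\in G_h$,
\begin{equation*}
\|[(\bm{p}-\Pi_h^*\bm{p})\cdot\bm{n}]\|_{0,\Gamma_K}^2\leq 2\sum_{i=1,2}\|(\tilde{\bm{p}}_i-\Pi_h\tilde{\bm{p}}_i)\cdot\bm{n}\|_{0,\Gamma_K}^2.
\end{equation*}
Mapping $K$ to the reference triangle, applying the trace inequality of Lemma \ref{tracedl} to $(\tilde{\bm{p}}_i-\Pi_h\tilde{\bm{p}}_i)\cdot\bm{n}$, and scaling back, I obtain $\|(\tilde{\bm{p}}_i-\Pi_h\tilde{\bm{p}}_i)\cdot\bm{n}\|_{0,\Gamma_K}^2\leq C(h_K^{-1}\|\tilde{\bm{p}}_i-\Pi_h\tilde{\bm{p}}_i\|_{0,K}^2+h_K|\tilde{\bm{p}}_i-\Pi_h\tilde{\bm{p}}_i|_{1,K}^2)\leq Ch_K^3|\tilde{\bm{p}}_i|_{2,K}^2$. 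Inserting this into the definition of $\|\cdot\|_{-\frac12,h,\Gamma}$ yields $h^{-2}\|\alpha_{\min}^{-1/2}[(\bm{p}-\Pi_h^*\bm{p})\cdot\bm{n}]\|_{-\frac12,h,\Gamma}^2\leq Ch^2\|\alpha_{\min}^{-1/2}\bm{p}\|_{2,\Omega_1\cup\Omega_2}^2$. The same trace-plus-scaling argument, combined with $|I_h\tilde u_i|_{1,K}=0$, gives $\|\tilde u_i-I_h\tilde u_i\|_{0,\Gamma_K}^2\leq Ch_K|\tilde u_i|_{1,K}^2$, whence $h_K^2\|\alpha_{\min}^{1/2}\{u-I_h^*u\}\|_{\frac12,h,\Gamma}^2\leq Ch^2\|\alpha^{1/2}u\|_{1,\Omega_1\cup\Omega_2}^2$. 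Combining the five contributions and taking square roots produces the asserted bound, and since the extension constants and the trace constant in Lemma \ref{tracedl} depend only on $\Omega_i$ and $\Gamma$ (not on how $\Gamma$ cuts $\mathcal{T}_h$), $C$ is independent of the interface position.

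The main obstacle is the interface block: the terms of order $h^{-1}$ and $h^{-1/2}$ in $\|\cdot\|_h$ and $\|\cdot\|_\star$ look dangerous on cut elements, since on such $K$ the boundary $\Gamma_K$ is unrelated to the mesh edges. The key device that makes the argument robust is that the extensions $\tilde{\bm{p}}_i,\tilde u_i$ live on the whole triangle $K$, so that the interface-independent reference trace inequality of Lemma \ref{tracedl} applies uniformly on every $K\in G_h$; this is precisely where the delicate construction of $\bm{E}_{\mathrm{div}}^i$ in Theorem \ref{divth} (preserving both $H^2$-regularity and the divergence bound) is essential, and it is also what guarantees that the interface-geometry constant does not appear in $C$.
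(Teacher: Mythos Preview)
Your proposal is correct and follows essentially the same route as the paper: extend $\bm{p}_i$ and $u_i$ to the whole of $\Omega$ via $\bm{E}^i_{\mathrm{div}}$ and $E_i$, apply the standard element-wise BDM$_1$ and $\mathbb{P}_0$ estimates on each $K\in\mathcal{T}_{h,i}$, and treat the two interface terms by the reference trace inequality of Lemma~\ref{tracedl} combined with the same element-wise bounds and the extension stabilities of Corollary~\ref{omegayt}. One small remark: your appeal to $[\bm{p}\cdot\bm{n}]=0$ is not part of the hypotheses of the theorem (which is stated for all $\bm{p}\in\bm{H}^1(\mathrm{div};\Omega_1\cup\Omega_2)\cap\bm{H}^2(\Omega_1\cup\Omega_2)$), and in fact it is unnecessary---the inequality you write follows directly from $(\bm{p}-\Pi_h^*\bm{p})|_{\Omega_i}=(\tilde{\bm{p}}_i-\Pi_h\tilde{\bm{p}}_i)|_{\Omega_i}$ and the triangle inequality, exactly as the paper does.
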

       \begin{proof}
                            Using the standard interpolation estimate of $\Pi_h$ and Corollary \ref{omegayt}, we get
                            \begin{align}\label{cz1}
                              \|\alpha^{-\fr{1}{2}}(\bm{p}-\Pi_h^*\bm{p})\|_{0,\Omega_1\cup\Omega_2}^2 \leq& \displaystyle{\sum_{i=1}^2}\displaystyle{\sum_{K\in\mathcal{T}_{h,i}}}
                              \alpha_i^{-1}\|\bm{E}_{\mathrm{div}}^i\bm{p_i}-\Pi_{h}\bm{E}_{\mathrm{div}}^i\bm{p_i}\|_{0,K}^2\notag\\
                              \leq &C\displaystyle{\sum_{i=1}^2}\displaystyle{\sum_{K\in\mathcal{T}_{h,i}}}
                              \alpha_i^{-1}h^2\|\bm{E}_{\mathrm{div}}^i\bm{p_i}\|_{1,K}^2\notag\\
                              \leq& C h^2\|\alpha^{-\frac{1}{2}}\bm{p}\|_{1,\Omega_1\cup\Omega_2}^2.
                            \end{align}

                           Combining the property (\ref{bdmjh}) of the interpolation $\Pi_h$, we have
                            \begin{align}
                              \|\mathrm{div}(\bm{p}-\Pi_h^*\bm{p})\|_{0,K_i}^2  \leq&
                              \|\mathrm{div}(\bm{E}_{\mathrm{div}}^i\bm{p_i}-\Pi_{h}\bm{E}_{\mathrm{div}}^i\bm{p_i})\|_{0,K}^2\notag
                              =\|\mathrm{div}\bm{E}_{\mathrm{div}}^i\bm{p_i}-Q_K(\mathrm{div}\bm{E}_{\mathrm{div}}^i\bm{p_i})\|_{0,K}^2\notag\\
                              \leq &C
                              h^2\|\mathrm{div}\bm{E}_{\mathrm{div}}^i\bm{p_i}\|_{1,K}^2.\notag
                            \end{align}
                            Summing over the triangles $K\in\mathcal{T}_h$ and using the property of the extension operator $\bm{E}_{\mathrm{div}}^i$ in Corollary \ref{omegayt}, we obtain
                            \begin{equation}
                             \|\alpha^{-\fr{1}{2}}\mathrm{div}(\bm{p}-\Pi_h^*\bm{p})\|_{0,\Omega_1\cup\Omega_2}^2
                             \leq
                             Ch^2\|\alpha^{-\fr{1}{2}}\mathrm{div}\bm{p}\|_{1,\Omega_1\cup\Omega_2}^2.
                            \end{equation}

                            Applying the trace inequality (\ref{traceinq}) and the standard interpolation estimate of $\Pi_h$, we derive
        \begin{align*}
        h\|[(\bm{p}-\Pi_h^*\bm{p})\cdot\bm{n}]\|_{0,\Gamma_K}^2
         \leq& Ch\sum_{i=1}^2\|(\bm{E}_{\mathrm{div}}^i\bm{p_i}-\Pi_{h}\bm{E}_{\mathrm{div}}^i\bm{p_i})\cdot{\bm{n}}\|_{0,\Gamma_K}^2\\
         \leq& Ch\sum_{i=1}^2
         \bigg(h_K^{-1}\|\bm{E}_{\mathrm{div}}^i\bm{p_i}-\Pi_{h}\bm{E}_{\mathrm{div}}^i\bm{p_i}\|_{0,K}^2\\
         &+h_K|\bm{E}_{\mathrm{div}}^i\bm{p_i}-\Pi_{h}\bm{E}_{\mathrm{div}}^i\bm{p_i}|_{1,K}^2\bigg)\notag\\
         \leq& Ch^4\sum_{i=1}^2\|\bm{E}_{\mathrm{div}}^i\bm{p_i}\|_{2,K}^2.
         \end{align*}
         Summing the contributions from $K\in G_h$, we obtain
         \begin{equation}\label{BDMP}
 h_K^{-2}\|\alpha_{\mathrm{min}}^{-\fr{1}{2}}[(\bm{p}-\Pi_h^*\bm{p})\cdot\bm{n}]\|_{-\fr{1}{2},h,\Gamma}^2
\leq Ch^2\|\alpha_{\mathrm{min}}^{-\fr{1}{2}}\bm{p}\|_{2,\Omega_1\cup\Omega_2}^2.
         \end{equation}

      Similar arguments as the above yield
          \begin{equation}
            h_K^2\|\alpha_{\mathrm{min}}^{\fr{1}{2}}\{u-I_h^*u\}\|_{\fr{1}{2},h,\Gamma}^2
            \leq Ch^2\|\alpha_{\mathrm{min}}^{\fr{1}{2}}u\|_{1,\Omega_1\cup\Omega_2}^2.
          \end{equation}

 By the standard estimate of the projection $I_h$, we deduce
         \begin{align}\label{cz2}
            \|\alpha^{\fr{1}{2}}(u-I_h^* u)\|_{0,\Omega_1\cup\Omega_2}^2
             \leq& C\displaystyle{\sum_{i=1}^2}\displaystyle{\sum_{K\in\mathcal{T}_{h,i}}}
             \alpha_i\|E_iu_i-I_hE_iu_i\|_{0,K}^2\notag\\
             \leq&C\displaystyle{\sum_{i=1}^2}\displaystyle{\sum_{K\in\mathcal{T}_{h,i}}}
             \alpha_ih^2\|E_iu_i\|_{1,K}^2\notag\\
             \leq&C h^2\|\alpha^{\fr{1}{2}}u\|_{1,\Omega_1\cup\Omega_2}^2.
          \end{align}
          Combining the inequalities (\ref{cz1})-(\ref{cz2}), the desired result follows.
           \end{proof}
\begin{rmk}
Theorem \ref{czwc} does not hold if the lowest order Raviart-Thomas finite element pair $(\mathbb{RT}_0, \mathbb{P}_0)$ is applied, since the pair $(\mathbb{RT}_0, \mathbb{P}_0)$ doesn't have second order approximation.
\end{rmk}

 Finally, we give the extended finite element error estimate as follows.

         \begin{thm}\label{yxywc}
         Assuming that $\bm{p}\in\bm{H}^1(\mathrm{div};\Omega_1\cup\Omega_2)\cap\bm{H}^2(\Omega_1\cup\Omega_2), u\in H^1(\Omega_1\cup\Omega_2)$, the solution of the problem $(\ref{femwf})$ satisfies the error estimate
         \begin{equation}
           \|(\bm{p}-\bm{p_h},u-u_h)\|\leq Ch\bigg(\|\alpha_{\mathrm{min}}^{-\fr{1}{2}}\bm{p}\|_{2,\Omega_1\cup\Omega_2}+\|\alpha^{\fr{1}{2}}u\|_{1,\Omega_1\cup\Omega_2}
            +\|\alpha^{-\fr{1}{2}}\mathrm{div}\bm{p}\|_{1,\Omega_1\cup\Omega_2}\bigg),
         \end{equation}
           where $C$ is independent of $h$ and how the interface intersects the triangulation.
       \end{thm}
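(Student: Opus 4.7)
\textbf{Proof plan for Theorem \ref{yxywc}.} The plan is a Strang/Céa-type argument built on three ingredients already at hand: the inf-sup stability of Theorem \ref{lbb}, the weak consistency in Lemma \ref{consistency}, and the interpolation estimate of Theorem \ref{czwc}. First, I would split the error via the triangle inequality as
\begin{equation*}
\|(\bm{p}-\bm{p_h},u-u_h)\| \leq \|(\bm{p}-\Pi_h^{*}\bm{p},u-I_h^{*}u)\| + \|(\Pi_h^{*}\bm{p}-\bm{p_h},I_h^{*}u-u_h)\|.
\end{equation*}
The first term on the right is directly bounded by $Ch$ times the desired right-hand side via Theorem \ref{czwc}, so the task reduces to controlling the discrete error $(\bm{\xi}_h,\eta_h):=(\Pi_h^{*}\bm{p}-\bm{p_h},I_h^{*}u-u_h)\in\bm{Q}_h\times V_h$.

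Next, because $(\bm{\xi}_h,\eta_h)$ sits in the discrete space, Lemma \ref{fskz} gives $\|(\bm{\xi}_h,\eta_h)\|\leq C\|(\bm{\xi}_h,\eta_h)\|_h$, and then Theorem \ref{lbb} furnishes some $(\bm{q_h},v_h)\in\bm{Q}_h\times V_h$ with
\begin{equation*}
C_s\,\|(\bm{\xi}_h,\eta_h)\|_h\,\|(\bm{q_h},v_h)\|_h \leq L(\bm{\xi}_h,\eta_h;\bm{q_h},v_h).
\end{equation*}
Splitting the right-hand side as $L(\Pi_h^{*}\bm{p}-\bm{p},I_h^{*}u-u;\bm{q_h},v_h)+L(\bm{p}-\bm{p_h},u-u_h;\bm{q_h},v_h)$, I would observe that the exact solution $u\in H^1(\Omega)$ has no jumps across element edges nor across $\Gamma$, so $J_1(u,v_h)=J_2(u,v_h)=0$. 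Combining this with Lemma \ref{consistency} yields
\begin{equation*}
L(\bm{p}-\bm{p_h},u-u_h;\bm{q_h},v_h) = B_h(\bm{p}-\bm{p_h},u-u_h;\bm{q_h},v_h)-\gamma_1J_1(u_h,v_h)-\gamma_2J_2(u_h,v_h)=0,
\end{equation*}
i.e.\ a clean Galerkin-type orthogonality for the full form $L$. Thus the entire job is to bound $L(\Pi_h^{*}\bm{p}-\bm{p},I_h^{*}u-u;\bm{q_h},v_h)$.

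For this last step I would reuse exactly the Cauchy--Schwarz bookkeeping of Lemma \ref{continu} to obtain
\begin{equation*}
L(\Pi_h^{*}\bm{p}-\bm{p},I_h^{*}u-u;\bm{q_h},v_h) \leq C\,\mathcal{E}\,\|(\bm{q_h},v_h)\|_h,
\end{equation*}
where $\mathcal{E}^2 = \|(\Pi_h^{*}\bm{p}-\bm{p},I_h^{*}u-u)\|^2 + J_1(I_h^{*}u,I_h^{*}u)+J_2(I_h^{*}u,I_h^{*}u)$ (using $J_i(u,\cdot)=0$ to rewrite $J_i(I_h^{*}u-u,\cdot)$ as $J_i(I_h^{*}u,\cdot)$). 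The first summand in $\mathcal{E}^2$ is $O(h^2)$ by Theorem \ref{czwc}. For the jump terms I would use the standard local estimate $\|[I_h^{*}u]_{e_i}\|_{0,e_i}^2\leq Ch\|u\|_{1,\omega_e}^2$ for the $L^2$-projection onto piecewise constants (together with the $H^1$-extension $E_i$ and $|e_i|\leq h_K$), yielding $J_1(I_h^{*}u,I_h^{*}u)\leq C\alpha_{\min}h^2\|u\|_{1,\Omega_1\cup\Omega_2}^2$, and an analogous estimate on $\Gamma_K$ via the trace inequality (\ref{traceinq}) for $J_2(I_h^{*}u,I_h^{*}u)$. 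Collecting everything produces $\mathcal{E}\leq Ch(\|\alpha_{\min}^{-1/2}\bm{p}\|_{2,\Omega_1\cup\Omega_2}+\|\alpha^{1/2}u\|_{1,\Omega_1\cup\Omega_2}+\|\alpha^{-1/2}\mathrm{div}\bm{p}\|_{1,\Omega_1\cup\Omega_2})$, which concludes the proof.

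\textbf{Main obstacle.} The routine parts (triangle inequality, inf-sup, interpolation) go through cleanly; the delicate point is the continuity estimate applied to the \emph{non-discrete} argument $(\Pi_h^{*}\bm{p}-\bm{p},I_h^{*}u-u)$ together with the extra stabilization error from $J_1$ and $J_2$. Because $I_h^{*}u$ is obtained on cut elements by independently projecting the two Sobolev extensions $E_1u_1$ and $E_2u_2$, it genuinely jumps across cut edges and across $\Gamma$ even though $u$ does not, and one must show these artificial jumps are still $O(h)$ uniformly with respect to how $\Gamma$ cuts the mesh. Handling this uniformly requires combining the $H^1$-extension operators from Corollary \ref{omegayt}, the trace inequality of Lemma \ref{tracedl}, and the fact that $|K_i|$ may be arbitrarily small compared with $|K|$; this is where the specific choice of stabilization and the weights $k^i$ in the average $\{\cdot\}$ are essential.
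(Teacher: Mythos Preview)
Your proposal is correct and follows essentially the same route as the paper: triangle inequality, Theorem \ref{czwc} for the interpolation part, then inf-sup (Theorem \ref{lbb}) combined with consistency (Lemma \ref{consistency}) to reduce the discrete error to $B_h(\bm{p}-\Pi_h^{*}\bm{p},u-I_h^{*}u;\bm{q_h},v_h)$ plus the stabilization remainders $J_i(I_h^{*}u,v_h)$, which are estimated exactly as you describe via $J_i(I_h^{*}u,I_h^{*}u)=J_i(u-I_h^{*}u,u-I_h^{*}u)$, the trace inequality (\ref{traceinq}), and the $H^1$-extensions $E_i$. Your ``clean Galerkin orthogonality for $L$'' is a neat repackaging of the same cancellation the paper writes out explicitly in (\ref{fem2}); the two computations are line-by-line equivalent.
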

\begin{proof}
       By the triangle inequality,
       \begin{align}\label{fem1}
         \|(\bm{p}-\bm{p_h},u-u_h)\|\leq & \|(\bm{p}-\Pi_h^*\bm{p},u-I_h^*u)\|+\|(\bm{p_h}-\Pi_h^*\bm{p},u_h-I_h^*u)\|_h.
       \end{align}

       Since the estimate of the first term can be obtained from Theorem $\ref{czwc}$, we only need to deal with the second term.
      \textcolor[rgb]{0.00,0.00,0.00}{Denote $\bm{\eta_h}=\bm{p_h}-\Pi_h^*\bm{p}, \zeta_h=u_h-I_h^*u$, $\bm{\eta}=\bm{p}-\Pi_h^*\bm{p}, \zeta=u-I_h^*u$. Combining Lemma $\ref{consistency}$ with Theorem $\ref{lbb}$},  we obtain
       \begin{align}\label{fem2}
          C_s\|(\bm{\eta_h},\zeta_h)\|_h\leq&\underset{(\bm{q_h},v_h)\in\bm{Q_h}\times V_h}{\mathrm{sup}}
           \dfrac{B_h(\bm{\eta_h},\zeta_h;\bm{q_h},v_h)
           +\gamma_1J_1(\zeta_h,v_h)+\gamma_2J_2(\zeta_h,v_h)}{\|(\bm{q_h},v_h)\|_h}\notag\\
           \leq&\underset{(\bm{q_h},v_h)\in\bm{Q_h}\times V_h}{\mathrm{sup}}
           \dfrac{B_h(\eta,\zeta;\bm{q_h},v_h)
           -\gamma_1J_1(I_h^*u,v_h)-\gamma_2J_2(I_h^*u,v_h)}{\|(\bm{q_h},v_h)\|_h}.
       \end{align}
      Using the Cauchy-Schwarz inequality and Lemma $\ref{fskz}$, we derive
      \begin{align}\label{fem3}
        B_h(\bm{p}-\Pi_h^*\bm{p},u-I_h^*u;\bm{q_h},v_h)\leq& C_B\|(\bm{p}-\Pi_h^*\bm{p},u-I_h^*u)\|\|(\bm{q_h},v_h)\|\notag\\
        \leq& C\|(\bm{p}-\Pi_h^*\bm{p},u-I_h^*u)\|\|(\bm{q_h},v_h)\|_h.
      \end{align}
     \textcolor[rgb]{0.00,0.00,0.00}{For the stabilization term $J_1(\cdot,\cdot)$ on (\ref{fem2}), it holds}
      \begin{align}\label{fem4}
        J_1(I_h^*u,v_h) =& \displaystyle{\sum_{i=1}^2}\displaystyle{\sum_{e\in\mathcal{F}_{h,i}^{cut}}}\int_{e_i} \alpha_{\mathrm{min}}h[I_h^*u]_{e_i}[v_h]_{e_i}ds\notag\\
        \leq&J_1(I_h^*u,I_h^*u)^{\fr{1}{2}}J_1(v_h,v_h)^{\fr{1}{2}}
        \leq J_1(I_h^*u,I_h^*u)^{\fr{1}{2}}\|(\bm{q_h},v_h)\|_h.
      \end{align}
Since $u\in H^1(\Omega_1\cup\Omega_2)$, using the trace inequality (\ref{traceinq}) and the standard estimate of $I_h$ yields
\begin{align}\label{fem5}
  J_1(I_h^*u,I_h^*u)= &J_1(u-I_h^*u,u-I_h^*u)\notag\\
  \leq& \displaystyle{\sum_{i=1}^2}\displaystyle{\sum_{e_i\in\mathcal{F}_{h,i}^{cut}}}\alpha_{\mathrm{min}}h\|E_iu_i-I_{h,i}^*u_i\|_{0,e}^2\notag\\
  \leq&C\displaystyle{\sum_{i=1}^2}\displaystyle{\sum_{K\in G_h}}\alpha_{\mathrm{min}}
  \bigg(\|E_iu_i-I_hE_iu_i\|_{0,K}^2+h^2|E_iu_i-I_hE_iu_i|_{1,K}^2\bigg)\notag\\
  \leq&C \displaystyle{\sum_{i=1}^2}\displaystyle{\sum_{K\in G_h}}\alpha_{\mathrm{min}}h^2\|E_iu_i\|_{1,K}^2\notag\\
  \leq& Ch^2\|\alpha^{\frac{1}{2}} u\|_{1,\Omega_1\cup\Omega_2}^2,
\end{align}
where $e$ is the whole edge containing the cut segment $e_i$.

Similarly,
\begin{align}\label{fem6}
  J_2(I_h^*u,v_h) =&\int_{\Gamma} \alpha_{\mathrm{min}}h[I_h^*u][v_h]ds\notag\\
  \leq&J_2(I_h^*u,I_h^*u)^{\fr{1}{2}}J_2(v_h,v_h)^{\fr{1}{2}}
  \leq J_2(I_h^*u,I_h^*u)^{\fr{1}{2}}\|(\bm{q_h},v_h)\|_h.
\end{align}
Using  the continuity of $u$ on $\Gamma$, the trace inequality (\ref{traceinq}) and the standard estimate of $I_h$, we obtain
\begin{align}\label{fem7}
  J_2(I_h^*u,I_h^*u)=& J_2(u-I_h^*u,u-I_h^*u)\notag\\
  \leq& \displaystyle{\sum_{i=1}^2}\displaystyle{\sum_{K\in G_h}}\alpha_{\mathrm{min}}h\|E_iu_i-I_{h,i}^*u_i\|_{0,\Gamma_K}^2\notag\\
  \leq&C\displaystyle{\sum_{i=1}^2}\displaystyle{\sum_{K\in G_h}}\alpha_{\mathrm{min}}\bigg(\|E_iu_i-I_hE_iu_i\|_{0,K}^2+h^2|E_iu_i-I_hE_iu_i|_{1,K}^2\bigg)\notag\\
  \leq & C\displaystyle{\sum_{i=1}^2}\displaystyle{\sum_{K\in G_h}}\alpha_{\mathrm{min}}h^2\|E_iu_i\|_{1,K}^2
  \leq  Ch^2\|\alpha^{\fr{1}{2}}u\|_{1,\Omega_1\cup\Omega_2}^2.
\end{align}
Combining the inequalities (\ref{fem1})-(\ref{fem7}), the proof is completed.
\end{proof}
\begin{rmk} We only consider the case of two subdomains in this paper. If the interfaces do not intersect with each other, it is natural to extend to multi-subdomain problems. However, it is not always so easy to extend to more than two subdomains. When it comes to multi-subdomain problems with intersecting interface, for example, problems with triple junction points, the complexity of the problems will increase.
\end{rmk}
\section{Numerical experiments}
In this section, we shall give some numerical examples to verify our theory. These numerical experiments are carried out based on the package iFEM \cite{Chenlong}. Before giving the numerical results, we define two errors $e_{u,p}$, $e_p$
\begin{eqnarray}
e_{u,p}=\fr{\|(\bm{p}-\bm{p_h},u-u_h)\|}{\|(\bm{p},u)\|_*}, ~~~
e_p=\frac{\|\bm{p}-\bm{p_h}\|_{0,\Omega}}{\|\text{div}\bm{p}\|_{1,{\Omega}}},
\end{eqnarray}
where $\|(\bm{p},u)\|_*=\|\alpha_{\mathrm{min}}^{-\fr{1}{2}}\bm{p}\|_{2,\Omega_1\cup\Omega_2}+\|\alpha^{\fr{1}{2}}u\|_{1,\Omega_1\cup\Omega_2}
            +\|\alpha^{-\fr{1}{2}}\mathrm{div}\bm{p}\|_{1,\Omega_1\cup\Omega_2}$. In the following tests, we choose the penalty parameters $\gamma=\gamma_1=\gamma_2=1$ and set the domain $\Omega=(-1,1)\times(-1,1)$ in the model problem (\ref{orig pro}).
            \textcolor[rgb]{0.00,0.00,0.00}{In Examples 1-4, we perform the numerical experiments based on structured meshes like Figure 3,  and in Example 5 numerical results are done on unstructured meshes like Figure 5. }
 \subsection{Example 1}
First we consider a circular interface $\Gamma: r_0^2=x^2+y^2$, $r_0=\frac{1}{6}$ and take the exact solution (cf. \cite{lzl03})
\begin{equation}\label{curveu}
  u=\left\{
\begin{array}{c}
\dfrac{r^5}{\alpha_1},\qquad\qquad\quad~~~~ \mathrm{if}~r\leq r_0,\vspace{3mm}\\
\dfrac{r^5}{\alpha_2}+(\dfrac{r_0^5}{\alpha_1}-\dfrac{r_0^5}{\alpha_2}),~~  \mathrm{if}~ r\geq r_0,
\end{array}
\right.
\end{equation}

\begin{equation}\label{curvep}
 \bm{p}=[5x(x^2+y^2)^{\fr{3}{2}},~5y(x^2+y^2)^{\fr{3}{2}}],
\end{equation}
 where $r^2=x^2+y^2$.
\begin{figure}[htbp]
\centering
\includegraphics[height=5cm,width=7cm]{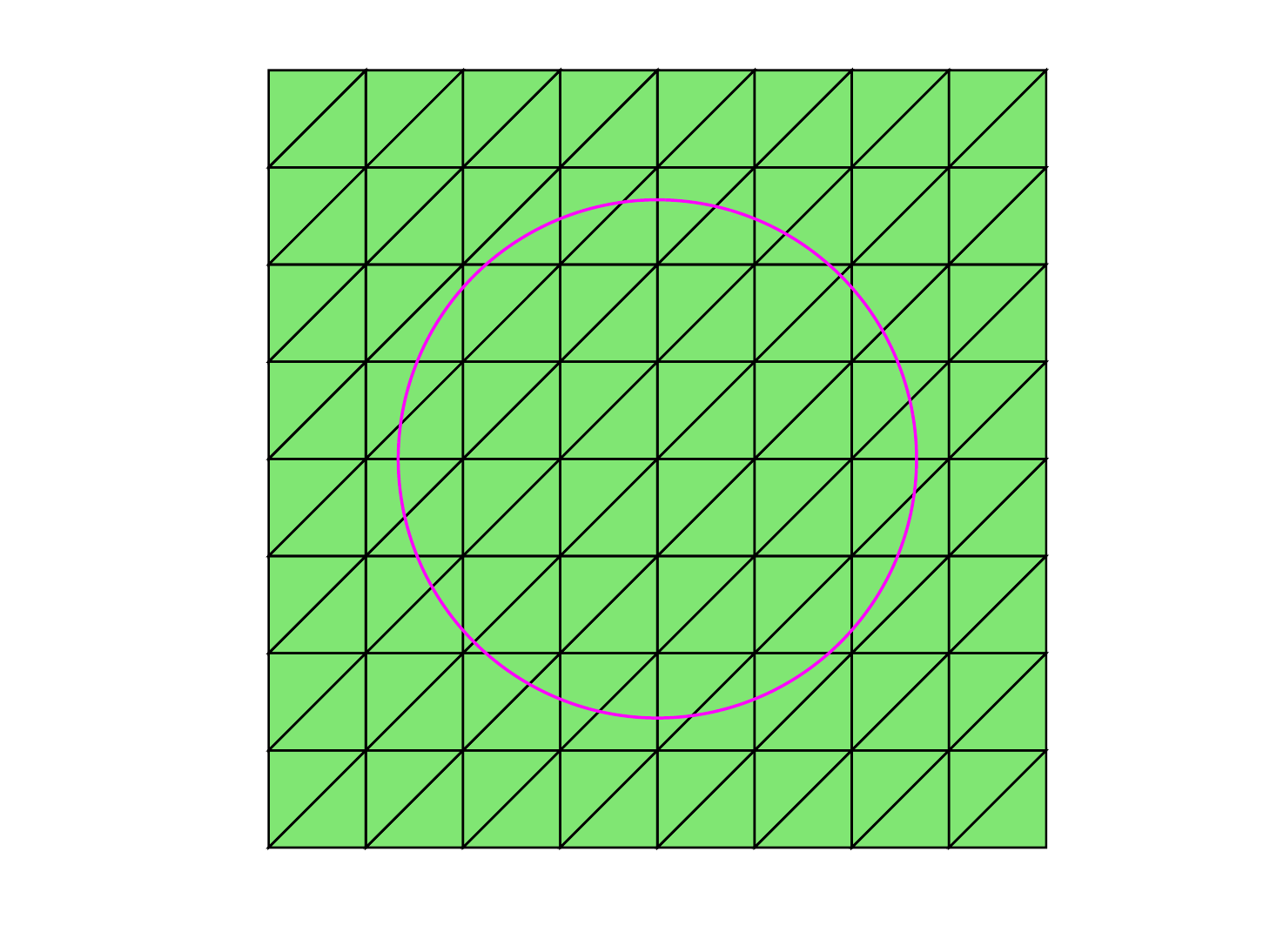}\\
{$\textbf{Fig. 3.}$ The domain with a circular interface: 8$\times$8 mesh.}
\end{figure}

\begin{table}[htbp]
\begin{center}
\caption{Errors with respect to $(\alpha_1,\alpha_2)=(1,10)$ and $(\alpha_1,\alpha_2)=(10,1)$.}
\label{curve101t}      
\begin{tabular}{lllllll}
\hline\noalign{\smallskip}
$(\alpha_1,\alpha_2)$  & ~$\frac{1}{h}$ & $e_p$ &order &$e_{u,p}$ &order\\
\noalign{\smallskip}\hline\noalign{\smallskip}
         &~~8&1.0605e-03 &          &1.8310e-02   &      \\
       & ~16 &2.7170e-04  &1.9647 &9.2224e-03&  0.9895    \\
 (1,10)& ~32 &6.8494e-05 &1.9879 &4.6197e-03 &  0.9974\\
       & ~64 &1.7178e-05& 1.9954 &2.3109e-03 &   0.9993\\
        &128 &  4.3000e-06& 1.9981 &1.1556e-03 & 0.9998\\
 \hline
             &~~8&3.3532e-03& & 4.1614e-02&  \\
               &~16&8.5914e-04 &  1.9646  &   2.0961e-02&  0.9894\\
       (10,1) &~32& 2.1659e-04 &  1.9880 &  1.0499e-02 &   0.9974\\
              &~64& 5.4317e-05 &   1.9955 &5.2521e-03 &  0.9994\\
               &128& 1.3597e-05&   1.9981 &   2.6263e-03&    0.9998\\
\noalign{\smallskip}\hline
\end{tabular}
\end{center}
\end{table}
\begin{table}[htbp]
\begin{center}
\caption{Errors with respect to $(\alpha_1,\alpha_2)=(1,10^3)$ and $(\alpha_1,\alpha_2)=(10^3,1)$.}
\label{curve1031t}       
\begin{tabular}{lllllll}
\hline\noalign{\smallskip}
$(\alpha_1,\alpha_2)$  & ~$\frac{1}{h}$ & $e_p$ &order &$e_{u,p}$ &order\\
\noalign{\smallskip}\hline\noalign{\smallskip}
             &~~8&1.0623e-04  &      & 7.0681e-04 &\\
           & ~16 &  2.7198e-05&  1.9657 &   3.5607e-04&  0.9892\\
 (1,$10^3$)& ~32 &  6.8639e-06 &   1.9864 &  1.7856e-04&    0.9958  \\
           & ~64 &  1.7246e-06 &  1.9928 &    8.9409e-05&  0.9979 \\
             &128 &  4.3210e-07&  1.9968  &   4.4729e-05&  0.9992\\
 \hline
                 &~~8&3.3532e-03&  &4.1614e-02&\\
                 &~16&   8.5914e-04& 1.9646 &  2.0971e-02&  0.9887 \\
       ($10^3$,1)&~32&    2.1658e-04&   1.9880 &   1.0504e-02&    0.9975  \\
                 &~64&  5.4316e-05&   1.9955  &  5.2528e-03&    0.9997  \\
                &128&    1.3597e-05&  1.9981 &    2.6264e-03&   0.9999 \\
\noalign{\smallskip}\hline
\end{tabular}
\end{center}
\end{table}
\begin{table}[htbp]
\begin{center}
\caption{Errors with respect to $(\alpha_1,\alpha_2)=(1,10^5)$ and $(\alpha_1,\alpha_2)=(10^5,1)$.}
\label{curve1051t}      
\begin{tabular}{lllllll}
\hline\noalign{\smallskip}
$(\alpha_1,\alpha_2)$  & ~$\frac{1}{h}$ & $e_p$ &order &$e_{u,p}$ &order\\
\noalign{\smallskip}\hline\noalign{\smallskip}
           &~~8&     1.0810e-05&          & 7.9172e-06&         \\
           & ~16 &   2.9890e-06&1.8547 &4.2835e-06  & 0.8862     \\
(1,$10^5$) & ~32 &   8.1830e-07&1.8690 &2.3229e-06 &   0.8829 \\
           & ~64 &   2.3066e-07&1.8269 &  1.2389e-06&  0.9069\\
           &128 &    6.0589e-08&1.9286 & 6.3539e-07&   0.9634  \\
 \hline
              &~~8&3.3532e-03&               & 4.1614e-02\\
              &~16&    8.5914e-04 &1.9646 &  2.1985e-02 &  0.9206\\
 ($10^5$,1)   &~32&   2.1658e-04   &   1.9880 &  1.0917e-02&    1.0099 \\
              &~64&   5.4316e-05 &   1.9955 &   5.3244e-03 &  1.0359  \\
              &128&   1.3597e-05&   1.9981 &  2.6366e-03&  1.0139 \\
\noalign{\smallskip}\hline
\end{tabular}
\end{center}
\end{table}

\textcolor[rgb]{0.00,0.00,0.00}{In
Tables \ref{curve101t}-\ref{curve1051t}, we list the errors with respect to different mesh sizes $\big($$h=\dfrac{1}{N},~N=$ 8, 16, 32, 64, 128$\big)$ and different pairs of coefficients $(\alpha_1,\alpha_2)=$(1,10), (10,1), (1,$10^3$),
 ($10^3$,1), (1,$10^5$), ($10^5$,1).} From 
Tables \ref{curve101t}-\ref{curve1051t}, we can observe that the convergence of $\bm{p}-\bm{p_h}$ in the norm $\|\cdot\|_h$ and $u-u_h$ in the norm $\|\cdot\|_{\star}$ are optimal. Moreover, the convergence of $\bm{p}-\bm{p_h}$ in the norm $\|\cdot\|_0$ is second order.

\subsection{Example 2}
  Next we consider an elliptic interface $\frac{x^2}{a^2}+\frac{y^2}{b^2}=1$, where $a=\frac{1}{6},~b=\frac{1}{7}$. The exact solution reads as
  \begin{equation}\label{ellipu}
    u=\left\{
\begin{array}{c}
\dfrac{(\fr{x^2}{a^2}+\fr{y^2}{b^2})^{\fr{5}{2}}}{\alpha_1},\qquad\qquad\quad~~~~ \mathrm{if}~ \frac{x^2}{a^2}+\frac{y^2}{b^2}\leq 1,\vspace{3mm}\\
\dfrac{(\fr{x^2}{a^2}+\fr{y^2}{b^2})^{\fr{5}{2}}}{\alpha_2}+(\dfrac{1}{\alpha_1}-\dfrac{1}{\alpha_2}),~~  \mathrm{if}~ \frac{x^2}{a^2}+\frac{y^2}{b^2}\geq 1,
\end{array}
\right.
  \end{equation}
\begin{equation}\label{ellipp}
  \bm{p}=[\fr{5x}{a^2}(\fr{x^2}{a^2}+\fr{y^2}{b^2})^{\fr{3}{2}},~
     \fr{5y}{b^2}(\fr{x^2}{a^2}+\fr{y^2}{b^2})^{\fr{3}{2}}].
\end{equation}


%
%
\begin{table}[htbp]
\begin{center}
\caption{Errors with respect to $(\alpha_1,\alpha_2)=(1,10)$ and $(\alpha_1,\alpha_2)=(10,1)$.}
\label{ellip101t}      
\begin{tabular}{lllllll}
\hline\noalign{\smallskip}
$(\alpha_1,\alpha_2)$  & ~$\frac{1}{h}$ & $e_p$ &order &$e_{u,p}$ &order\\
\noalign{\smallskip}\hline\noalign{\smallskip}
         &~~8& 1.0459e-03&        &1.8201e-02    &      \\
       & ~16 & 2.6787e-04&1.9651 & 9.1705e-03&0.9889 \\
(1,10) & ~32 & 6.7518e-05&1.9882 &4.5940e-03& 0.9972 \\
       & ~64 & 1.6931e-05&1.9956 &2.2981e-03&0.9993\\
        &128 & 4.2382e-06&1.9982 &1.1492e-03&  0.9998  \\
 \hline
 &~~8&3.3070e-03& &4.1497e-02&\\
       &~16& 8.4703e-04&  1.9650 &2.0909e-02&0.9889\\
(10,1)       &~32& 2.1350e-04& 1.9882  &1.0474e-02&0.9973\\
       &~64& 5.3538e-05&1.9956 & 5.2396e-03&0.9993\\
       &128&1.3402e-05&1.9982 &2.6201e-03&0.9998\\
\noalign{\smallskip}\hline
\end{tabular}
\end{center}
\end{table}
\begin{table}[htbp]
\begin{center}
\caption{Errors with respect to $(\alpha_1,\alpha_2)=(1,10^3)$ and $(\alpha_1,\alpha_2)=(10^3,1)$.}
\label{ellip1031t}       
\begin{tabular}{lllllll}
\hline\noalign{\smallskip}
$(\alpha_1,\alpha_2)$  & ~$\frac{1}{h}$ & $e_p$ &order &$e_{u,p}$ &order\\
\noalign{\smallskip}\hline\noalign{\smallskip}
           &~~8&1.0477e-04&  & 7.1725e-04 &\\
           & ~16 & 2.6806e-05& 1.9666 &3.6137e-04&0.9890\\
(1,$10^3$) & ~32 & 6.7610e-06 &1.9872 &1.8112e-04&0.9965\\
           & ~64 &1.6968e-06 &1.9944 & 9.0650e-05&0.9986\\
           &128 & 4.2504e-07 &1.9971 & 4.5347e-05&0.9993 \\
 \hline
           &~~8&3.3069e-03& &4.1497e-02&\\
           &~16&8.4702e-04&1.9650 & 2.0919e-02&0.9882 \\
 ($10^3$,1)&~32&2.1350e-04&1.9882 & 1.0478e-02&  0.9974 \\
            &~64& 5.3538e-05&1.9956 &5.2402e-03& 0.9997\\
           &128&  1.3402e-05&1.9982 & 2.6202e-03& 0.9999 \\
\noalign{\smallskip}\hline
\end{tabular}
\end{center}
\end{table}

\begin{table}[htbp]
\begin{center}
\caption{Errors with respect to $(\alpha_1,\alpha_2)=(1,10^5)$ and $(\alpha_1,\alpha_2)=(10^5,1)$.}
\label{ellip1051t}      
\begin{tabular}{lllllll}
\hline\noalign{\smallskip}
$(\alpha_1,\alpha_2)$  & ~$\frac{1}{h}$ & $e_p$ &order &$e_{u,p}$ &order\\
\noalign{\smallskip}\hline\noalign{\smallskip}
        &~~8&1.0556e-05 &         &7.8668e-06&\\
        & ~16 &2.8586e-06 &1.8846  &4.1953e-06&0.9070\\
(1,$10^5$) & ~32 &  7.6259&1.9063  &2.1895e-06&0.9382\\
        & ~64 & 2.0283e-07&1.9106 &1.1399e-06& 0.9417\\
        &128 &  5.3241e-08&1.9297 &5.8480e-07&  0.9629 \\
 \hline
          &~~8&3.3069e-03&          & 4.1497e-02 &  \\
          &~16&8.4702e-04&1.9650 &2.1882e-02& 0.9233 \\
($10^5$,1)&~32&2.1350e-04&1.9882 &1.0871e-02&  1.0093 \\
          &~64& 5.3538e-05&1.9956 & 5.2995e-03& 1.0365  \\
          &128& 1.3402e-05&1.9982 &2.6284e-03&1.0117 \\
\noalign{\smallskip}\hline
\end{tabular}
\end{center}
\end{table}
 Numerical results are shown in
  Tables \ref{ellip101t}-\ref{ellip1051t}, we can see the same convergence property holds as Example 1.

\subsection{Example 3}
\textcolor[rgb]{0.00,0.00,0.00}{ In order to test the robustness with respect to the position of the interface}, we consider a family of interfaces $\Gamma:~x+\frac{\pi}{6}+\xi=0$, and $\xi$ varies from 0.1 to 0.0000001. The exact solution is
  \begin{equation}\label{ellipu}
    u=\left\{
\begin{array}{c}
\dfrac{(x+\frac{\pi}{6}+\xi)(x^2-1)(y^2-1)}{\alpha_1}, \quad\mathrm{if} ~x\leq -\frac{\pi}{6}-\xi,\vspace{3mm}\\
\dfrac{(x+\frac{\pi}{6}+\xi)(x^2-1)(y^2-1)}{\alpha_2}, \quad\mathrm{if} ~x\geq -\frac{\pi}{6}-\xi,
\end{array}
\right.
  \end{equation}
  \quad\quad$\bm{p}=[p_1,p_2],$
  where
  \begin{align*}
    p_1= & (\frac{\pi}{6}+\xi + x)(x - 1)(y^2 - 1) + (\frac{\pi}{6}+\xi + x)(x + 1)(y^2 - 1)\\
    & + (x^2 - 1)(y^2 - 1),\\
    p_2= & (\frac{\pi}{6}+\xi + x)(x^2 - 1)(y - 1) + (\frac{\pi}{6}+\xi + x)(x^2 - 1)(y + 1).
  \end{align*}
\begin{figure*}[htbp]
  \center{
  \includegraphics[width=8cm]{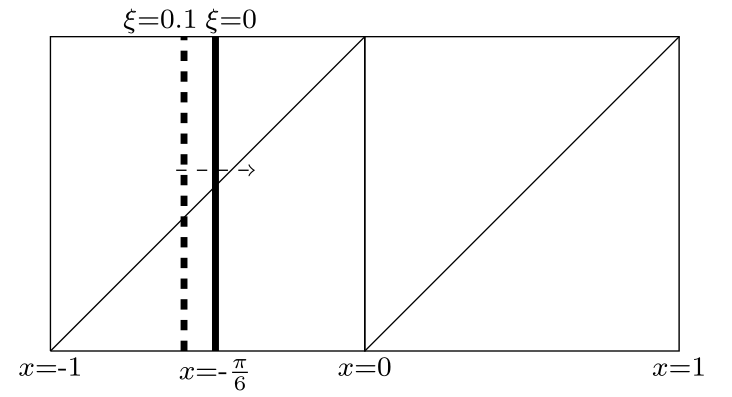}}
  \centerline{$\textbf{Fig. 4.}$  The position of the interface.}
\end{figure*}
\begin{table}[htbp]
\begin{center}
\caption{Relative errors with a family of interfaces.}
\label{moving interface}       
\begin{tabular}{lllll}
\hline\noalign{\smallskip}
$(\alpha_1,\alpha_2)$  & ~$\xi$  &$~~~~~e_{u,p}$&$~~~~~~e_p$ \\
\noalign{\smallskip}\hline\noalign{\smallskip}
 &0.1 & 1.1497e-03&8.3966e-05 \\
 & 0.01&1.3532e-03&9.5288e-05 \\
(1,$10^3$) &0.001&1.3795e-03 & 9.6553e-05 \\
 & 0.0001 &1.3825e-03&  9.6723e-05\\
 &0.00001 &1.3828e-03&  9.6740e-05 \\
 &0.000001& 1.3828e-03&  9.6742e-05\\
 &0.0000001&1.3828e-03&9.6742e-05\\
\noalign{\smallskip}\hline
\end{tabular}
\end{center}
\end{table}

Taking the pair $(\alpha_1,\alpha_2)$ as (1,$10^3$) and the mesh size $h=\fr{1}{32}$, Table \ref{moving interface} shows the relative errors of the solutions are below a certain threshold when $\xi$ varies, which indicates the errors are almost independent of the interface to some extent.

\subsection{Example 4}
In this test, we consider the case where the exact solution is discontinuous on the interface $\Gamma: x=\fr{1}{3}$. The exact solution is
  \begin{equation}\label{ellipu}
    u=\left\{
\begin{array}{c}
y^3+2, \quad\mathrm{if} ~x\leq \frac{1}{3},\vspace{3mm}\\
2y^3, \quad\quad\mathrm{if} ~ x> \frac{1}{3},
\end{array}
\right.
  \end{equation}
  and
  \begin{equation}\label{ellipu}
    \bm{p}=\left\{
\begin{array}{lc}
\left[0,3y^2\right], \quad\mathrm{if} ~x\leq \frac{1}{3},\vspace{3mm}\\
\left[0,6y\right], \quad~\mathrm{if} ~ x> \frac{1}{3}.
\end{array}
\right.
  \end{equation}

  \begin{table}[htbp]
  \begin{center}
\caption{Errors with respect to $(\alpha_1,\alpha_2)=(1,10)$ and $(\alpha_1,\alpha_2)=(10,1)$.}
\label{jumpline101t}      
\begin{tabular}{lllllll}
\hline\noalign{\smallskip}
$(\alpha_1,\alpha_2)$  & ~$\frac{1}{h}$ & $~~~~~~e_p$ &order &$~~~~~e_{u,p}$
&order\\
\noalign{\smallskip}\hline\noalign{\smallskip}
           &~~8 &8.0233e-04&       &1.5624e-02                 \\
           & ~16&1.8001e-04&2.1561&7.8922e-03&0.9853    \\
 (1,$10$)  & ~32&4.4847e-05&2.0050&3.8741e-03&1.0265   \\
           & ~64&1.1052e-05&2.0207&1.9471e-03&0.9926    \\
           &128 &2.7611e-06&2.0010&9.7105e-04&1.0037     \\
 \hline
                &~~8&7.3742e-04&       &2.8865e-04                          \\
                &~16&1.7974e-04&2.0365 &1.4069e-04&1.0368                        \\
       ($10$,1) &~32&4.3585e-05&2.0441 &7.0319e-05&1.0005    \\
                &~64&1.0828e-05&2.0090 &3.5058e-05&1.0042   \\
                &128&2.6993e-06&2.0041 &1.7541e-05&0.9990      \\
\noalign{\smallskip}\hline
\end{tabular}
\end{center}
\end{table}
    \begin{table}[htbp]
    \begin{center}
\caption{Errors with respect to $(\alpha_1,\alpha_2)=(1,10^3)$ and $(\alpha_1,\alpha_2)=(10^3,1)$.}
\label{jumpline1031t}      
\begin{tabular}{lllllll}
\hline\noalign{\smallskip}
$(\alpha_1,\alpha_2)$  & ~$\frac{1}{h}$ & $~~~~~~e_p$ &order &$~~~~~e_{u,p}$
&order\\
\noalign{\smallskip}\hline\noalign{\smallskip}
           &~~8 &2.5516e-05&       &5.0312e-04                \\
           & ~16&5.6891e-06&2.1651&2.5562e-04&0.9769      \\
 (1,$10^3$)& ~32&1.4282e-06&1.9940&1.2541e-04&1.0273   \\
           & ~64&3.5170e-07&2.0218&6.3103e-05&0.9909    \\
           &128 &8.8023e-08&1.9984&3.1467e-05&1.0038     \\
 \hline
                &~~8&2.1700e-05&       &8.4026e-07                          \\
                &~16&5.4647e-06&1.9895 &4.0896e-07&1.0389                      \\
      ($10^3$,1)&~32&1.3275e-06&2.0415 &2.0559e-07&0.9922    \\
                &~64&3.3245e-07&1.9974 &1.0244e-07&1.0049\\
                &128&8.2923e-08&2.0033 &5.1309e-08&0.9975     \\
\noalign{\smallskip}\hline
\end{tabular}
\end{center}
\end{table}
  \begin{table}[htbp]
  \begin{center}
\caption{Errors with respect to $(\alpha_1,\alpha_2)=(1,10^5)$ and $(\alpha_1,\alpha_2)=(10^5,1)$.}
\label{jumpline1051t}      
\begin{tabular}{lllllll}
\hline\noalign{\smallskip}
$(\alpha_1,\alpha_2)$  & ~$\frac{1}{h}$ & $~~~~~~e_p$ &order &$~~~~~e_{u,p}$
&order\\
\noalign{\smallskip}\hline\noalign{\smallskip}
           &~~8 &2.5517e-06&      &5.0312e-05&                                 \\
           & ~16&5.6891e-07&2.1652&2.5562e-05& 0.9769  \\
 (1,$10^5$)& ~32&1.4282e-07&1.9939&1.2541e-05&1.0273    \\
           & ~64&3.5170e-08&2.0218&6.3104e-06&0.9909   \\
           &128 &8.8024e-09&1.9984&3.1468e-06&1.0038        \\
 \hline
                &~~8&2.1698e-06&      &8.4017e-10                            \\
                &~16&5.4645e-07&1.9894&4.0892e-10 &1.0389   \\
      ($10^5$,1)&~32&1.3274e-07&2.0415&2.0557e-10&0.9922   \\
                &~64&3.3245e-08&1.9974&1.0243e-10 &1.0049  \\
                &128&8.2921e-09&2.0033& 5.1304e-11&0.9975                  \\
\noalign{\smallskip}\hline
\end{tabular}
\end{center}
\end{table}
From tables \ref{jumpline101t}-\ref{jumpline1051t}, we can see that the same convergence property holds as Example 1 even though the exact solution is discontinuous on the interface, which is consistent with our theoretical analysis.
\begin{figure}[htbp]
\centering
\includegraphics[height=5cm,width=7cm]{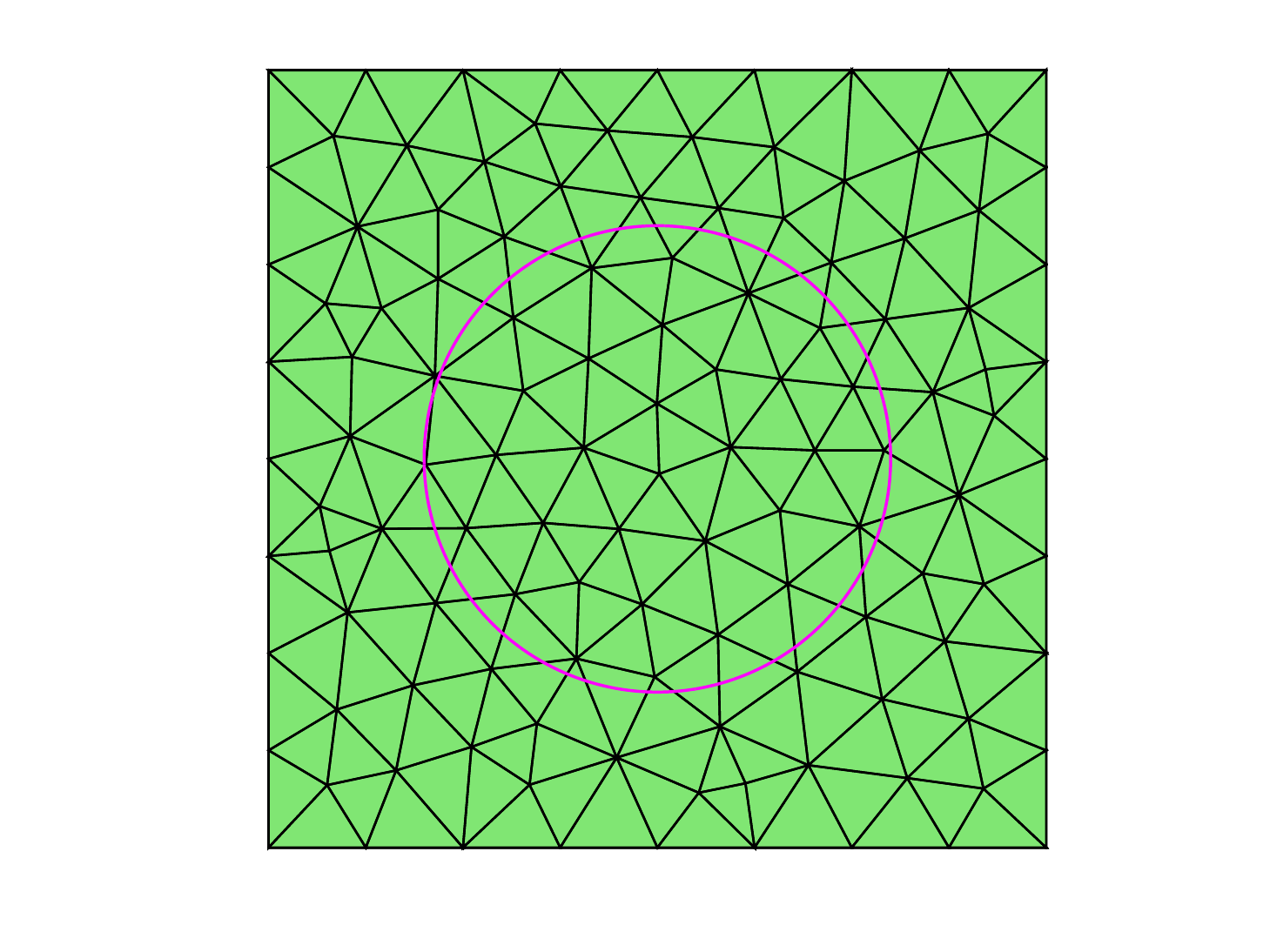}\\
{$\textbf{Fig. 5.}$ The domain with a circular interface: 8$\times$8 mesh.}
\end{figure}
\subsection{Example 5}
In this test, we consider the case where the exact solution is discontinuous on the interface $\Gamma: x=\fr{1}{3}$. The exact solution is
  \begin{equation}\label{ellipu}
    u=\left\{
\begin{array}{c}
\dfrac{(x^2-1)(y^2-1)}{\alpha_1}, \quad\mathrm{if} ~x\leq \frac{1}{3},\vspace{3mm}\\
\dfrac{(x^2-1)(y^2-1)}{\alpha_2}, \quad\mathrm{if} ~x> \frac{1}{3},
\end{array}
\right.
  \end{equation}
  $\bm{p}=[p_1,p_2],$
  where
  \begin{align*}
    p_1= & (x - 1)(y^2 - 1) + (x + 1)(y^2 - 1),\\
    p_2= & (x^2 - 1)(y - 1) + (x^2 - 1)(y + 1),
  \end{align*}
  and
  \begin{equation}
    [u]=\bigg(\frac{1}{\alpha_1}-\frac{1}{\alpha_2}\bigg)(x^2-1)(y^2-1)\neq 0,\quad \mathrm{if} \quad\alpha_1\neq\alpha_2.
  \end{equation}

 \begin{table}[htbp]
 \begin{center}
\caption{Errors with respect to $(\alpha_1,\alpha_2)=(1,10)$ and $(\alpha_1,\alpha_2)=(10,1)$.}
\label{hgline101t}      
\begin{tabular}{lllllll}
\hline\noalign{\smallskip}
$(\alpha_1,\alpha_2)$  & ~$\frac{1}{h}$ & $~~~~~~e_p$ &order &$~~~~~e_{u,p}$
&order\\
\noalign{\smallskip}\hline\noalign{\smallskip}
           &~~8&2.2900e-03   &             &2.1246e-02                         \\
           & ~16 &5.2361e-04 & 2.1287    &  1.0654e-02 &0.9959                           \\
 (1,$10$)  & ~32 & 1.2917e-04  & 2.0192  &4.6577e-03 & 1.1936   \\
           & ~64 &3.1558e-05 & 2.0332 & 2.2500e-03&  1.0497   \\
           &128 &7.8296e-06&  2.0110   & 1.1026e-03& 1.0290    \\
 \hline
                &~~8&1.7887e-03&          &1.8474e-02               \\
                &~16&4.4033e-04&2.0222 &9.0719e-03&1.0260       \\
       ($10$,1) &~32&1.0252e-04&2.1027 &4.1444e-03&1.1303       \\
                &~64&2.5361e-05&2.0152 &1.9490e-03&1.0884   \\
                &128&6.2378e-06&2.0235 &9.5173e-04&1.0341     \\
\noalign{\smallskip}\hline
\end{tabular}
\end{center}
\end{table}
 \begin{table}[htbp]
 \begin{center}
\caption{Errors with respect to $(\alpha_1,\alpha_2)=(1,10^3)$ and $(\alpha_1,\alpha_2)=(10^3,1)$.}
\label{hgline1031t}      
\begin{tabular}{lllllll}
\hline\noalign{\smallskip}
$(\alpha_1,\alpha_2)$  & ~$\frac{1}{h}$ & $~~~~~~e_p$ &order &$~~~~~e_{u,p}$
&order\\
\noalign{\smallskip}\hline\noalign{\smallskip}
           &~~8&2.0681e-03   &            &6.7820e-03                            \\
           & ~16 &4.8083e-04  &2.1047  &3.3307e-03& 1.0259                                   \\
 (1,$10^3$)  & ~32 &1.2020e-04&2.0001  &1.4754e-03& 1.1747  \\
           & ~64 &2.9688e-05 & 2.0175  &7.1944e-04& 1.0362  \\
           &128 &7.3936e-06 &  2.0055  &3.5338e-04& 1.0256 \\
 \hline
                &~~8&1.3589e-03&            &4.2089e-03               \\
                &~16&3.5136e-04  &1.9514 &2.0056e-03& 1.0694                         \\
       ($10^3$,1) &~32&8.6122e-05&2.0285 &9.3737e-04&1.0973        \\
                &~64& 2.1568e-05 &1.9975 &4.4292e-04&1.0816    \\
                &128&  5.3779e-06&2.0038 &2.1721e-04&1.0280  \\
\noalign{\smallskip}\hline
\end{tabular}
\end{center}
\end{table}
 \begin{table}[htbp]
 \begin{center}
\caption{Errors with respect to $(\alpha_1,\alpha_2)=(1,10^5)$ and $(\alpha_1,\alpha_2)=(10^5,1)$.}
\label{hgline1051t}      
\begin{tabular}{lllllll}
\hline\noalign{\smallskip}
$(\alpha_1,\alpha_2)$  & ~$\frac{1}{h}$ & $~~~~~~e_p$ &order &$~~~~~e_{u,p}$
&order\\
\noalign{\smallskip}\hline\noalign{\smallskip}
             &~~8  &2.0656e-03  &         &7.1412e-04    \\
             & ~16 &4.8035e-04 &2.1044 &3.5064e-04&1.0262    \\
 (1,$10^5$)  & ~32 &1.2009e-04 &2.0000 &1.5534e-04&1.1745    \\
             & ~64 &2.9667e-05 &2.0171 &7.5756e-05&1.0360    \\
             &128  &7.3889e-06 &2.0054 &3.7211e-05&1.0256    \\
 \hline
                  &~~8&1.3533e-03&           &4.3109e-04              \\
                  &~16&3.5019e-04&1.9503 &2.0536e-04&1.0699     \\
       ($10^5$,1) &~32&8.5917e-05&2.0271 &9.6015e-05&1.0968        \\
                  &~64&2.1523e-05&1.9970 &4.5374e-05&1.0814    \\
                  &128&5.3690e-06&2.0032 &2.2253e-05&1.0279    \\
\noalign{\smallskip}\hline
\end{tabular}
\end{center}
\end{table}
From tables \ref{hgline101t}-\ref{hgline1051t}, we can see that the same convergence property holds as Example 1 even though the numerical experiment is carried out on unstructured meshes, which is in compliance with our theoretical analysis.

\section{Conclusions}
In this paper, we propose an extended mixed finite element method for elliptic interface problems with interface-unfitted meshes. An inf-sup stability result and optimal error estimates have been derived which are independent of the mesh size and how the interface intersects the triangulation. The optimal convergence is shown by theories and numerical experiments. Moreover, 2-order convergence rate is verified by numerical results for $\bm{p}-\bm{p_h}$ in the norm $\|\cdot\|_0$.

In the future, we shall study the variant coefficient case, the three-dimensional case and also
apply our proposed scheme to $\bm{H}(\text{div})$-like interface problems, such as Darcy-Stokes interface problems, which has important applications in the simulation of flow in porous media.
\section*{Acknowledgements}
This work was supported by the National Natural Science Foundation of China (NSFC) (Grants Nos. 11871281, 11731007, 12071227), and the Natural Science Foundation of the Jiangsu Higher Education Institutions of China (Grants Nos.  20KJA110001). The authors thank the reviewers for their valuable suggestions to improve this manuscript greatly.


\clearpage
\clearpage
\clearpage
{\small
\bibliographystyle{unsrt}

\begin{thebibliography}{100}\itemsep=-1pt

\bibitem{yantuo}
R.~A. Adams and J.~Fournier.
\newblock {\em Sobolev spaces}.
\newblock Academic Press, New York, 2003.

\bibitem{fracture}
C.~D' Angelo and A.~Scotti.
\newblock A mixed finite element method for {D}arcy flow in fractured porous
  media with non-matching grids.
\newblock {\em Math. Model. Numer. Anal.}, 46:465--489, 2012.

\bibitem{Arnold90}
D.~N. Arnold.
\newblock Mixed finite element methods for elliptic problems.
\newblock {\em Comput. Methods Appl. Mech. Engrg.}, 82:281--300, 1990.

\bibitem{09els}
R.~Becker, E.~Burman, and P.~Hansbo.
\newblock A {N}itsche extended finite element method for incompressible
  elasticity with discontinuous modulus of elasticity.
\newblock {\em Comput. Methods Appl. Mech. Engrg.}, 198:3352--3360, 2009.

\bibitem{txlf}
T.~Belytschko and T.~Black.
\newblock Elastic crack growth in finite elements with minimal remeshing.
\newblock {\em Int. J. Numer. Meth. Engng.}, 45:601--620, 1999.

\bibitem{bookbrener}
S.~C. Brenner and L.~R. Scott.
\newblock {\em The {M}athematical {T}heory of {F}inite {E}lement {M}ethods. 3rd
  ed.}
\newblock Springer, New York, 2008.

\bibitem{1985BDM}
F.~Brezzi, J.~Douglas, and L.~D. Marini.
\newblock Two families of mixed finite elements for second order elliptic
  problems.
\newblock {\em Numer. Math.}, 47:217--235, 1985.

\bibitem{1991mixfem}
F.~Brezzi and M.~Fortin.
\newblock {\em Mixed and {H}ybrid {F}inite {E}lement {M}ethod}.
\newblock Springer-Verlag, New York, 1991.

\bibitem{saddle15}
L.~Cattaneo, L.~Formaggia, G.~Iori, A.~Scotti, and P.~Zunino.
\newblock Stabilized extended finite elements for the approximation of saddle
  point problems with unfitted interfaces.
\newblock {\em Calcolo}, 52:123--152, 2015.

\bibitem{Chenlong}
L.~Chen.
\newblock i{FEM} : An innovative finite element method package in {MATLAB}.
\newblock {\em University of California, Irvine}, 2008.

\bibitem{ALE}
Z.~Cheng, J.~Li, C.~Y. Loh, and L.~Luo.
\newblock An exactly force-balanced boundary-conforming
  arbitrary-{L}agrangian-{E}ulerian method for interfacial dynamics.
\newblock {\em J. Comput. Phys.}, 408:109237, 2020.

\bibitem{exten}
L.~C. Evans.
\newblock {\em Partial Differential Equations}.
\newblock American Mathematical Society, Providence, 1998.

\bibitem{viewxfem1}
T.~Fries and T.~Belytschko.
\newblock The extended/generalized finite element method: {A}n overview of the
  method and its applications.
\newblock {\em Int. J. Numer. Meth. Engng.}, 84:253--304, 2010.

\bibitem{FDM95}
V.~Girault and R.~Glowinski.
\newblock Error analysis of a fictitious domain method applied to a {D}irichlet
  problem.
\newblock {\em Japan J. Indust. Appl. Math.}, 12:487--514, 1995.

\bibitem{1986NS}
V.~Girault and P.-A. Raviart.
\newblock {\em {F}inite {E}lement {M}ethods for {N}avier-{S}tokes {E}quations}.
\newblock Springer-Verlag, Berlin, Heidelberg, 1986.

\bibitem{FDM94}
R.~Glowinski, T.~Pan, and J.~Periaux.
\newblock A fictitious domain method for {D}irichlet problem and applications.
\newblock {\em Comput. Methods Appl. Mech. Engrg.}, 111:283--303, 1994.

\bibitem{broadxfem}
S.~Gross and A.~Reusken.
\newblock {\em Numerical {M}ethods for {T}wo-phase {I}ncompressible {F}lows}.
\newblock Springer, Berlin, 2011.

\bibitem{02Hansbo}
A.~Hansbo and P.~Hansbo.
\newblock An unfitted finite element method, based on {N}itsche's method, for
  elliptic interface problems.
\newblock {\em Comput. Methods Appl. Mech. Engrg.}, 191:5537--5552, 2002.

\bibitem{04els}
A.~Hansbo and P.~Hansbo.
\newblock A finite element method for the simulation of strong and weak
  discontinuities in solid mechanics.
\newblock {\em Comput. Methods Appl. Mech. Engrg.}, 193:3523--3540, 2004.

\bibitem{stokes2014}
P.~Hansbo, M.~Larson, and S.~Zahedi.
\newblock A cut finite element method for a {S}tokes interface problem.
\newblock {\em Appl. Numer. Math.}, 85:90--114, 2014.

\bibitem{FDM09}
J.~Haslinger and Y.~Renard.
\newblock A new fictitious domain approach inspired by the extended finite
  element method.
\newblock {\em SIAM J. Numer. Anal.}, 47:1474--1499, 2009.

\bibitem{jhtu}
R.~Hiptmair.
\newblock Finite elements in computational electromagnetism.
\newblock {\em Acta Numer.}, 11:237--339, 2002.

\bibitem{divex}
R.~Hiptmair, J.~Li, and J.~Zou.
\newblock Convergence analysis of finite element methods for
  {H}(div;{$\Omega$})-elliptic interface problems.
\newblock {\em Numer. Math.}, 18:187--218, 2010.

\bibitem{Siam16}
M.~Kirchhart, S.~Gross, and A.~Reusken.
\newblock Analysis of an {XFEM} discretization for {S}tokes interface problems.
\newblock {\em SIAM J. Sci. Comput.}, 38:A1019--A1043, 2016.

\bibitem{lzl98}
Z.~Li.
\newblock The immersed interface method using a finite element formulation.
\newblock {\em Appl. Numer. Math.}, 27:253--267, 1998.

\bibitem{lzl03}
Z.~Li, T.~Lin, and X.~Wu.
\newblock New {C}artesian grid methods for interface problems using the finite
  element formulation.
\newblock {\em Numer. Math.}, 96:61--98, 2003.

\bibitem{Lintao19}
T.~Lin, D.~Sheen, and X.~Zhang.
\newblock A nonconforming immersed finite element method for elliptic interface
  problems.
\newblock {\em J. Sci. Comput.}, 79:442--463, 2019.

\bibitem{71nicai}
J.~Nitsche.
\newblock $\rm\ddot{U}$ber ein {V}ariationsprinzip zur {L}$\rm\ddot{o}$sungvon
  {D}irichlet-{P}roblemen bei {V}erwendungvon {T}eilr$\rm\ddot{a}$umen, die
  keinen {R}andbedingungen unterworfen sind.
\newblock {\em Abh. Math. Univ. Hamburg}, 36:9--15, 1971.

\bibitem{peskin2002}
C.~S. Peskin.
\newblock The immersed boundary method.
\newblock {\em Acta Numerica}, 11:479--517, 2002.

\bibitem{17frac}
M.~D. Pra, A.~Fumagalli, and A.~Scotti.
\newblock Well posedness of fully coupled fracture/bulk {D}arcy flow with
  {XFEM}.
\newblock {\em SIAM J. Numer. Anal.}, 55:785--811, 2017.

\bibitem{FDM07}
I.~Rami$\grave{e}$re, P.~Angot, and M.~Belliard.
\newblock A fictitious domain approach with spread interface for elliptic
  problems with general boundary conditions.
\newblock {\em Comput. Methods Appl. Mech. Engrg.}, 196:766--781, 2007.

\bibitem{2020DHMFM}
R.~Sacco, A.~G. Mauri, and G.~Guidoboni.
\newblock A stabilized dual mixed hybrid finite element method with {L}agrange
  multipliers for three-dimensional problems with internal interfaces.
\newblock {\em J. Sci. Comput.}, 82:31, 2020.

\bibitem{fdm19}
C.~S\'{e}bastien.
\newblock A fictitious domain approach for a mixed finite element method
  solving the two-phase {S}tokes problem with surface tension forces.
\newblock {\em J. Comput. Appl. Math.}, 359:30--54, 2019.

\bibitem{wn2019}
N.~Wang and J.~Chen.
\newblock A nonconforming {N}itsche's extended finite element method for
  {S}tokes interface problems.
\newblock {\em J. Sci. Comput.}, 81:342--374, 2019.

\bibitem{wql2015}
Q.~Wang and J.~Chen.
\newblock A new unfitted stabilized {N}itsche's finite element method for
  {S}tokes interface problems.
\newblock {\em Comput. Math. Appl.}, 70:820--834, 2015.

\bibitem{wsq2007}
S.~Wang.
\newblock Elliptic interface problem solved using the mixed finite element
  method.
\newblock {\em Thesis (Ph.D.)-State University of New York at Stony Brook},
  2007.

\bibitem{2000Multigrid}
M.~F. Wheeler and I.~Yotov.
\newblock Multigrid on the interface for mortar mixed finite element methods
  for elliptic problems.
\newblock {\em Comput. Methods Appl. Mech. Engrg.}, 184:287--302, 2000.

\bibitem{Yang2003}
D.~Yang and J.~Zhao.
\newblock An iterative hybridized mixed finite element method for elliptic
  interface problems with strongly discontinuous coefficients.
\newblock {\em J. Comput. Math.}, 21:257--276, 2003.

\bibitem{2019jz}
J.~Zhu, P.~Vargas, and A.~H\'{e}ctor.
\newblock Robust and efficient mixed hybrid discontinuous finite element
  methods for elliptic interface problems.
\newblock {\em Int. J. Numer. Anal. Model.}, 16:767--788, 2019.

\end{thebibliography}

}
\end{document}